\newcommand{\vertiii}[1]{{\left\vert\kern-0.25ex\left\vert\kern-0.25ex\left\vert #1
		\right\vert\kern-0.25ex\right\vert\kern-0.25ex\right\vert}}
\newcommand{\Folner}{Følner }
\theoremstyle{plain}
\newtheorem{Thm}{Theorem}[section]
\newtheorem{Prop}[Thm]{Proposition}
\newtheorem{Lem}[Thm]{Lemma}
\newtheorem{Cor}[Thm]{Corollary}
\newtheorem*{Main results}{Main results about the gauge}
\newtheorem*{W-W}{Wiener-Wintner pointwise ergodic theorem}
\theoremstyle{definition}
\newtheorem{Def}[Thm]{Definition}
\newtheorem{Not}[Thm]{Notation}
\newtheorem{Rmk}[Thm]{Remark}
\renewcommand{\epsilon}{\varepsilon}
\title{Temporo-spatial differentiations for actions of locally compact groups}
\author{Aidan Young$^1$}
\address{University of North Carolina at Chapel Hill}
\email{$^1$\url{aidanjy@live.unc.edu}}
\begin{document}
	
	\maketitle
	
	\begin{abstract}
		In this paper, we extend the notion of temporo-spatial differentiation problems to the setting of actions of more general topological groups. The problem can be expressed as follows: Given an action $T$ of an amenable discrete group $G$ on a probability space $(X, \mu)$ by automorphisms, let $(F_k)_{k = 1}^\infty$ be a Følner sequence for $G$, and let $(C_k)_{k = 1}^\infty$ be a sequence of measurable subsets of $X$ with positive probability $\mu(C_k)$. What is the limiting behavior of the sequence
		$$\left( \frac{1}{\mu(C_k)} \int_{C_k} \frac{1}{|F_k|} \sum_{g \in F_k} f(T_g x) \mathrm{d} \mu(x) \right)_{k = 1}^\infty$$
		for $f \in L^\infty(X, \mu)$? We provide some positive convergence results for temporo-spatial differentiations with respect to ergodic averages over Følner sequences, as well as with respect to ergodic averages over subsequences of the integers (e.g. polynomials), multiple ergodic averages, and weighted ergodic averages.
	\end{abstract}
	
	Temporo-spatial differentiation problems were introduced under the name of ``spatial-temporal differentiation problems" in \cite{Assani-Young}\footnote[2]{Both authors of \cite{Assani-Young} felt that this new name was a bit easier to pronounce than the original.} in the context of actions of $\mathbb{Z}$. Here, we extend this concept to a more general setting. For our purposes, a temporo-spatial differentiation problem is a question of the following form: Let $T : G \curvearrowright (X, \mu)$ be a measure-preserving action of a locally compact group $G$ on a probability space $(X, \mu)$, and suppose $(F_i)_{i \in \mathscr{I}}$ is a net of compact subsets of $G$ with positive Haar measure $m(F_i)$. Let $(C_i)_{i \in \mathscr{I}}$ be a net of measurable subsets of $X$ with positive measure, and $f \in L^\infty(X, \mu)$. What can be said about the limiting behavior of the net
	$$\left( \frac{1}{\mu(C_i)} \int_{C_i} \frac{1}{m(F_i)} \int_{F_i} f(T_g x) \mathrm{d} m(g) \mathrm{d} \mu(x) \right)_{i \in \mathscr{I}} ?$$
	Though we will consider temporo-spatial differentiation problems that don't fit exactly into this mold, this description captures the problem in its basic form, including its distinctive features: we consider a limit of averages with both a a temporal aspect (provided by the average over $F_i$) and a spatial aspect (provided by the average over $C_i$).
	
	Our primary goal with this paper is to take the material originally presented in \cite{Assani-Young} and extend it by showing that certain concepts from that earlier paper can be extended to various different interesting settings.
	
	In Section \ref{Topological stuff}, we provide some general results about temporo-spatial differentiations. In particular, we provide a characterization in terms of ergodic optimization of a kind of ``best-case scenario" behavior, where temporo-spatial averages of continuous functions always converge to the integral.
	
	In Section \ref{Large averaging sets and static averaging sets}, we provide convergence theorems for two special cases of temporo-spatial differentiation averages: where the spatial averaging sets have measure going to $1$, and where they are constant.
	
	In Section \ref{Temporo-spatial differentiation theorems around sets of rapidly vanishing diameter}, we show that in the case where the spatial averaging sets share a common fixed point $x$ and have diameter going to $0$ sufficiently fast, then the associated temporo-spatial differentiations can be reduced to a pointwise temporal average at that fixed point $x$. This then provides us a means to prove convergence results for suitable ``random temporo-spatial differentiation problems." In particular, these reduction results can be applied even when the temporal averaging sets are not \Folner.
	
	In Section \ref{Weighed temporo-spatial differentiation theorems}, we generalize some of the results of Section \ref{Temporo-spatial differentiation theorems around sets of rapidly vanishing diameter} to the setting of weighted temporo-spatial ergodic averages. These include equicontinuous families of continuous weight functions of modulus 1, as well as potentially unbounded weight sequences of complex constants.
	
	\section{General results and unique ergodicity}\label{Topological stuff}
	
	Throughout this article, by a \emph{topological dynamical system}, we will mean a continuous action $T$ of a locally compact \emph{bimodular} topological group $G$ on a compact metrizable space $X$, denoted $T : G \curvearrowright X$. We will use $m$ to refer to a left- and right-invariant Haar measure on the group $G$, hereafter referred to simply as a Haar measure. Our consideration of bimodular groups is primarily to simplify some bookkeeping about when we are invoking a left-invariant Haar measure and when we are invoking a right-invariant Haar measure. Of course, the class of bimodular groups includes all abelian groups, all discrete groups, and all compact groups, thus encompassing many of the groups ergodic theory classically considers actions of.
	
	\begin{Def}
		Let $T : G \curvearrowright X$ be a topological dynamical system, where $G$ is an amenable group, and let $f \in C_{\mathbb{R}}(X)$ be a real-valued continuous function on $X$. The \emph{gauge} of $f$ is the value
		$$\Gamma(f) : = \sup \left\{ \int f \mathrm{d} \mu : \mu \in \mathcal{M}_T (X) \right\} , $$
		where $\mathcal{M}_T(X)$ denotes the family of $T$-invariant Borel probability measures on $X$. We say that $\mu \in \mathcal{M}_T(X)$ is \emph{$f$-maximizing} if $\int f \mathrm{d} \mu = \Gamma(f)$, and denote the class of all $f$-maximizing measures on $X$ by
		$$\mathcal{M}_{\mathrm{max}}(f) : = \left\{ \mu \in \mathcal{M}_T(X) : \int f \mathrm{d} \mu = \Gamma(f) \right\} .$$
	\end{Def}
	
	The gauge is well-defined, since if $G$ is amenable, then $\mathcal{M}_T(X)$ is a nonempty Choquet simplex in the weak*-topology. Since $\mathcal{M}_T (X)$ is compact, it follows that $\mathcal{M}_{\mathrm{max}}(f)$ is nonempty.
	
	We now wish to provide an alternative description of the gauge for nonnegative-valued functions.
	
	\begin{Lem}\label{Averages are continuous}
		Let $T : G \curvearrowright X$ be a topological dynamical system, and let $K \subseteq G$ be a compact subset of a locally compact group $G$. Let $f \in C(X)$. Then the function $x \mapsto \int_K f(T_g x) \mathrm{d} m(g)$ is continuous, where $m$ is a Haar measure on $G$.
	\end{Lem}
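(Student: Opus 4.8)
The plan is to fix $x_0 \in X$ and show that the map $\Phi(x) := \int_K f(T_g x)\, \mathrm{d} m(g)$ is continuous at $x_0$. Since $X$ is metrizable, it suffices to verify sequential continuity, so let $x_n \to x_0$ in $X$. For each fixed $g \in G$ the map $x \mapsto T_g x$ is continuous, being the restriction of the (jointly continuous) action map $G \times X \to X$, and $f$ is continuous, so $f(T_g x_n) \to f(T_g x_0)$ for every $g \in G$. Moreover $|f(T_g x_n)| \le \|f\|_\infty$ for all $n$ and all $g$, and the constant function $\|f\|_\infty$ is $m$-integrable on $K$ because $K$, being compact, has finite Haar measure $m(K) < \infty$. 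Each integrand $g \mapsto f(T_g x_n)$ is continuous on $G$, hence Borel measurable, so the dominated convergence theorem applies and gives $\Phi(x_n) = \int_K f(T_g x_n)\, \mathrm{d} m(g) \to \int_K f(T_g x_0)\, \mathrm{d} m(g) = \Phi(x_0)$, as desired.

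A metrizability-free alternative is to argue by a compactness/uniformity packaging: the function $(g,x) \mapsto f(T_g x)$ is continuous on $G \times X$, so $\psi(g,x) := |f(T_g x) - f(T_g x_0)|$ is continuous on $K \times X$ and vanishes identically on the compact slice $K \times \{x_0\}$. Given $\varepsilon > 0$, for each $g \in K$ choose open neighborhoods $A_g \ni g$ in $K$ and $B_g \ni x_0$ in $X$ with $\psi < \varepsilon$ on $A_g \times B_g$; finitely many $A_{g_1}, \dots, A_{g_n}$ cover $K$, and on the neighborhood $B := \bigcap_{i=1}^n B_{g_i}$ of $x_0$ one then has $\sup_{g \in K} |f(T_g x) - f(T_g x_0)| < \varepsilon$, whence $|\Phi(x) - \Phi(x_0)| \le \int_K \psi(g,x)\, \mathrm{d} m(g) \le \varepsilon\, m(K)$; letting $\varepsilon \to 0$ yields continuity at $x_0$.

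There is no serious obstacle here. The only points that deserve a moment of care are the measurability and integrability bookkeeping — that $g \mapsto f(T_g x)$ is a bona fide integrable function on $K$, which rests on continuity of the action together with $m(K) < \infty$ — and, in the second route, the promotion of pointwise continuity to a uniform estimate over $g \in K$ via compactness of $K$.
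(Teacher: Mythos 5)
Both of your arguments are correct. Your second, ``metrizability-free'' argument is essentially the paper's own proof: the paper also exploits joint continuity of $(g,x) \mapsto f(T_g x)$ together with compactness of $K$ to extract a finite subcover and obtain a single $\delta$ for which $\sup_{g \in K} |f(T_g x) - f(T_g y)| $ is small whenever $\rho(x,y) < \delta$, and then integrates this uniform bound over $K$ to get an estimate of size $\epsilon\, m(K)$ (the paper normalizes by $2m(K)$ up front, after disposing of the trivial case $m(K)=0$ — a case your $\varepsilon\, m(K)$ bound also handles painlessly). Your first argument, via the dominated convergence theorem, is a genuinely different and somewhat more economical route: it needs only continuity of $x \mapsto T_g x$ for each fixed $g$, boundedness of $f$, and $m(K) < \infty$, at the price of invoking metrizability of $X$ to reduce continuity to sequential continuity. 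The uniformity route buys slightly more — an explicit modulus-of-continuity statement uniform in $g \in K$, which is the form actually reused later in the paper's quantitative estimates — whereas the DCT route is shorter and works verbatim for merely first-countable $X$ or, with nets in place of sequences and a net version of dominated convergence avoided, would require extra care; as written both are complete.
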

	
	\begin{proof}
		We can assume that $K$ is of positive Haar measure, and in particular nonempty, since otherwise this would be trivial.
		
		Fix $\epsilon > 0$, and let $\rho$ be a compatible metric for $X$. We know a priori that the function $G \times X \to \mathbb{C}$ given by $(g, x) \mapsto f(T_g x)$ is continuous, so for each $g \in K$, choose an open neighborhood $U_g \subseteq G$ of $g$ and a positive number $\delta_{g} > 0$ such that if $\left( g' , x' \right) \in U_{g} \times B \left( x, \delta_{g} \right)$, then $\left| f \left(T_{g'} x' \right) - f(T_g x) \right| < \frac{\epsilon}{2 m(K)}$. Then $\left\{U_g\right\}_{g \in K}$ is an open cover of the compact $K$, so there exist $g_1, \ldots, g_n \in K$ such that $K \subseteq U_{g_1} \cup \cdots \cup U_{g_n}$. Let $\delta = \min \left\{ \delta_{g_1}, \ldots, \delta_{g_n} \right\}$. Then if $\rho(x, y) < \delta$, and $g \in K$, then $g \in U_{g_j}$ for some $j \in \{1, \ldots, n\}$. Therefore $(g, x), (g, y) \in U_{g_j} \times B(x, \delta) \subseteq U_{g_j} \times B (x, \delta_j)$, so
		$$|f(T_g x) - f(T_g y)| \leq \left| f(T_g x) - f\left(T_{g_j} x\right) \right| + \left| f \left( T_{g_j} x \right) - f \left( T_g y \right) \right| < \frac{\epsilon}{2 m(K)} + \frac{\epsilon}{2 m(K)} = \frac{\epsilon}{m(K)} .$$
		
		Thus there exists $\delta > 0$ such that if $\rho(x, y) < \delta$, then $\left| f(T_g x) - f(T_g y) \right| < \frac{\epsilon}{m(K)}$. Therefore, it follows that if $\rho(x, y) < \delta$, then
		\begin{align*}
			\left| \int_K f(T_g x) \mathrm{d} m(g) - \int_K f(T_g y) \mathrm{d} m(g) \right|	& = \left| \int_K \left( f(T_g x) - f(T_g y) \right) \mathrm{d} m(g) \right| \\
			& \leq \int_K \left| f(T_g x) - f(T_g y) \right| \mathrm{d} m(g) \\
			& \leq \int_K \frac{\epsilon}{m(K)} \mathrm{d} m(g) \\
			& = \epsilon .
		\end{align*}
		Therefore the function $x \mapsto \int_k f(T_g x) \mathrm{d} m(g)$ is continuous.
	\end{proof}
	
	\begin{Not}
		\begin{enumerate}[label=(\alph*)]
			\item Let $T : G \curvearrowright X$ be a continuous action of a locally compact group $G$ with Haar measure $m$ on a topological space $X$, and let $f$ be a continuous function $X \to \mathbb{C}$. Let $K$ be a compact subset of $G$ with positive Haar measure. We define $\operatorname{Avg}_K f : X \to \mathbb{C}$ to be the continuous function
			$$\operatorname{Avg}_K f (x) = \frac{1}{m(K)} \int_K f(T_g x) \mathrm{d} m(g) .$$
			The continuity of $\operatorname{Avg}_K f$ follows from Lemma \ref{Averages are continuous}. In the event where $G$ is a discrete group, we will also define $\operatorname{Avg}_K f$ for all nonempty compact subsets $K$ of $G$ and $f \in L^1(X, \mu)$ by
			$$\operatorname{Avg}_K f = \frac{1}{|K|} \sum_{g \in K} f \circ T_g .$$
			\item Let $(X, \mu)$ be a probability space, and let $f \in L^1(X, \mu)$. Let $C$ be a measurable subset of $X$ with $\mu(C) > 0$. We define the functional $\alpha_C : L^1(X, \mu) \to \mathbb{C}$ by
			$$\alpha_C(f) : = \frac{1}{\mu(C)} . $$
		\end{enumerate}
		Although the functionals $\alpha_C$ are defined here on $L^1$, we will almost always be interested in their action on $L^\infty$, where they are considerably better behaved.
	\end{Not}
	
	\begin{Def}\label{Chapter 3 definition of Folner}
		Let $G$ be a locally compact topological group. A net $(F_i)_{i \in \mathscr{I}}$ of compact subsets of $G$ is called \emph{\Folner} if $m(F_i) > 0$ for all $i \in \mathscr{I}$, and
		\begin{align*}
			\lim_i \frac{m(g F_i \Delta F_i)}{m(F_i)}	& = 0	& (\forall g \in G),
		\end{align*}
		where $\Delta$ denotes the symmetric difference $A \Delta B = (A \setminus B) \cup (B \setminus A)$.
	\end{Def}
	
	\begin{Thm}\label{Gauge converges}
		Let $T : G \curvearrowright X$ be a topological dynamical system, where $G$ is an amenable group with Haar measure $m$. Let $(F_i)_{i \in \mathscr{I}}$ be a left \Folner \space net for $G$, and let $f \in C_\mathbb{R}(X)$ be a nonnegative-valued continuous function on $X$. Then the net $\left( \left\| \operatorname{Avg}_{F_i} f \right\|_{C(X)} \right)_{i \in \mathscr{I}}$ converges, and
		$$\Gamma(f) = \lim_i \left\| \operatorname{Avg}_{F_i} f \right\| .$$
	\end{Thm}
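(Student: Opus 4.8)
The plan is to establish the two one-sided bounds $\liminf_i \left\| \operatorname{Avg}_{F_i} f \right\|_{C(X)} \geq \Gamma(f)$ and $\limsup_i \left\| \operatorname{Avg}_{F_i} f \right\|_{C(X)} \leq \Gamma(f)$ separately; since the net of real numbers $\left( \left\| \operatorname{Avg}_{F_i} f \right\|_{C(X)} \right)_{i \in \mathscr{I}}$ is bounded between $0$ and $\left\| f \right\|_{C(X)}$, these two inequalities together force the net to converge, necessarily to $\Gamma(f)$. I will use throughout that, since $f \geq 0$, each $\operatorname{Avg}_{F_i} f$ is a nonnegative continuous function (continuity by Lemma \ref{Averages are continuous}), so by compactness of $X$ we have $\left\| \operatorname{Avg}_{F_i} f \right\|_{C(X)} = \max_{x \in X} \operatorname{Avg}_{F_i} f(x)$, attained at some point $x_i \in X$.

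For the lower bound, I would fix an arbitrary $\mu \in \mathcal{M}_T(X)$ and compute, via Fubini's theorem (applicable because $(g,x) \mapsto f(T_g x)$ is bounded and jointly continuous on the finite-measure space $F_i \times X$) together with the $T$-invariance of $\mu$,
$$\int_X \operatorname{Avg}_{F_i} f \, \mathrm{d}\mu = \frac{1}{m(F_i)} \int_{F_i} \left( \int_X f(T_g x) \, \mathrm{d}\mu(x) \right) \mathrm{d}m(g) = \int_X f \, \mathrm{d}\mu .$$
Since $\operatorname{Avg}_{F_i} f$ is nonnegative, its maximum value is at least this average, so $\left\| \operatorname{Avg}_{F_i} f \right\|_{C(X)} \geq \int_X f \, \mathrm{d}\mu$ for every $i$; taking the supremum over $\mu \in \mathcal{M}_T(X)$ gives $\left\| \operatorname{Avg}_{F_i} f \right\|_{C(X)} \geq \Gamma(f)$ for all $i$, which yields the lower bound.

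The upper bound is the heart of the matter, and here I would manufacture an invariant measure out of the maximizing points $x_i$. Set $L := \limsup_i \left\| \operatorname{Avg}_{F_i} f \right\|_{C(X)}$ and pass to a subnet along which $\left\| \operatorname{Avg}_{F_i} f \right\|_{C(X)} \to L$. For each $i$ in this subnet, define a Borel probability measure $\nu_i$ on $X$ by $\int_X \phi \, \mathrm{d}\nu_i := \operatorname{Avg}_{F_i}\phi(x_i) = \frac{1}{m(F_i)} \int_{F_i} \phi(T_g x_i) \, \mathrm{d}m(g)$ for $\phi \in C(X)$, so that $\int_X f \, \mathrm{d}\nu_i = \left\| \operatorname{Avg}_{F_i} f \right\|_{C(X)}$. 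Since $X$ is compact metrizable, the space of Borel probability measures on $X$ is weak*-compact, so after passing to a further subnet we may assume $\nu_i \to \nu$ in the weak* topology, whence $\int_X f \, \mathrm{d}\nu = \lim_i \left\| \operatorname{Avg}_{F_i} f \right\|_{C(X)} = L$. I would then verify $\nu \in \mathcal{M}_T(X)$: for fixed $h \in G$ and $\phi \in C(X)$, a change of variables using the left-invariance of $m$ gives
$$\left| \int_X \phi(T_h y)\, \mathrm{d}\nu_i(y) - \int_X \phi \, \mathrm{d}\nu_i \right| = \frac{1}{m(F_i)} \left| \int_{h F_i} \phi(T_g x_i)\, \mathrm{d}m(g) - \int_{F_i} \phi(T_g x_i)\, \mathrm{d}m(g) \right| \leq \left\| \phi \right\|_{C(X)} \cdot \frac{m(h F_i \Delta F_i)}{m(F_i)},$$
and the right-hand side tends to $0$ because $(F_i)$ is a left \Folner net. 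Letting $i \to \infty$ and using weak* convergence on both sides (note $y \mapsto \phi(T_h y)$ is continuous) gives $\int_X \phi(T_h y)\,\mathrm{d}\nu(y) = \int_X \phi \, \mathrm{d}\nu$; as $h$ and $\phi$ are arbitrary, $\nu$ is $T$-invariant. Hence $L = \int_X f \, \mathrm{d}\nu \leq \Gamma(f)$, which is the upper bound.

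The step I expect to be the main obstacle is the verification that the limiting measure $\nu$ is $T$-invariant: this is where the (left) \Folner hypothesis is genuinely used, and one must keep careful track of which invariance property of the bimodular Haar measure $m$ is invoked when translating $F_i$ by $h$, as well as ensure that every integrand to which weak* convergence is applied (namely $f$ and the functions $\phi \circ T_h$) is continuous. By contrast, the Fubini interchange in the lower bound, the attainment of the sup-norm, and the passages to subnets are routine consequences of the standing compactness and continuity hypotheses.
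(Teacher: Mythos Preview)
Your proof is correct and follows essentially the same strategy as the paper: build probability measures from the points (or, in the paper, measures $\sigma_i$) where $\operatorname{Avg}_{F_i} f$ attains its supremum, pass to a weak*-convergent subnet, verify $T$-invariance of the limit via the \Folner condition, and combine this with the elementary inequality $\int f\,\mathrm{d}\nu = \int \operatorname{Avg}_{F_i} f\,\mathrm{d}\nu \le \|\operatorname{Avg}_{F_i} f\|_{C(X)}$ for every $\nu\in\mathcal{M}_T(X)$. Your organization into separate $\liminf \ge \Gamma(f)$ and $\limsup \le \Gamma(f)$ steps is a minor repackaging of the paper's argument that every weak*-limit point is $f$-maximizing, but the substance is the same.
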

	
	\begin{proof}
		For each $i \in \mathscr{I}$, let $\sigma_i$ be a Borel probability measure on $X$ such that 
		$$\int \operatorname{Avg}_{F_i} f \mathrm{d} \sigma_i = \left\| \operatorname{Avg}_{F_i} f \right\|_{C(X)} .$$ 
		For each $i \in \mathscr{I}$, define the Borel probability measure $\mu_i$ on $X$ by
		$$\int f \mathrm{d} \mu_i = \int_X \left( \operatorname{Avg}_{F_i} f \right) \mathrm{d} \sigma_i = \frac{1}{m(F_i)} \int_{F_i} \left( \int_X T_g f(x) \mathrm{d} \sigma_i(x) \right) \mathrm{d} m(g) ,$$
		where the latter equality follows from Fubini's Theorem.
		
		In order to prove that the net $\left( \int f \mathrm{d} \mu_i \right)_{i \in \mathscr{I}}$ converges to $\Gamma(f)$, it will suffice to prove that for any convergent sub-net $\left( \mu_{i_j} \right)_{j \in \mathscr{J}}$, the sub-net $\left( \int f \mathrm{d} \mu_{i_j} \right)_{j \in \mathscr{J}}$ converges to $\Gamma(f)$, because if $\left( \int f \mathrm{d} \mu_{i} \right)_{i \in \mathscr{I}}$ didn't converge to $\Gamma(f)$, then we could extract some subnet $\left( \mu_{i_j} \right)_{j \in \mathscr{J}}$ along which $\left( \int f \mathrm{d} \mu_{i_j} \right)_{j \in \mathscr{J}}$ converged to some other point (because the net is contained in a compact subset of $\mathbb{R}$), then take a weak*-convergent subnet of that, yielding a contradiction.
		
		Therefore, we need to show that every weak*-limit point of the net $\left( \mu_i \right)_{i \in \mathscr{I}}$ is $f$-maximizing.
		
		Since the space $\mathcal{M}(X)$ of Borel probability measures on $X$ is weak*-compact, it follows that there exists a weak*-convergent sub-net $\left( \mu_{i_j} \right)_{j \in \mathscr{J}}$, converging to some $\mu$. It follows then that $\mu$ is $T$-invariant, since if $f_0 \in C(X) , g_0 \in G$, then
		\begin{align*}
			\left| \int T_{g_0} f_0 \mathrm{d} \mu - \int f_0 \mathrm{d} \mu \right|	& = \left| \int \left( T_{g_0} f_0 - f_0 \right) \mathrm{d} \mu \right| \\
			& = \lim_j \left| \int \left( T_{g_0} f_0 - f_0 \right) \mathrm{d} \mu_{i_j} \right| ,
		\end{align*}
		where
		\begin{align*}
			& \left| \int_X \frac{1}{m\left(F_{i_j} \right)} \int_{F_{i_j}} \left( T_{g_0} f_0(x) - f_0(x) \right) \mathrm{d} m(g) \mathrm{d} \sigma_{i_j} (x)\right| \\
			=	& \left| \int_X \frac{1}{m \left( F_{i_j} \right)} \left( \left( \int_{ F_{i_j} g_0 \setminus F_{i_j}} T_g f_0(x) \mathrm{d} m(g) \right) - \left( \int_{F_{i_j} \setminus F_{i_j} g_0} T_g f_0(x) \mathrm{d} m(g) \right) \right) \mathrm{d} \sigma_{i_j} (x) \right| \\
			\leq	& \left| \int_X \frac{1}{m\left(F_{i_j}\right)} \int_{g_0 F_{i_j} \setminus F_{i_j}} T_g f_0(x) \mathrm{d} m(g) \mathrm{d} \sigma_{i_j}(x) \right| \\
			& + \left| \int_X \frac{1}{m\left(F_{i_j}\right)} \int_{F_{i_j} \setminus g_0 F_{i_j}} T_g f_0(x) \mathrm{d} m(g) \mathrm{d} \sigma_{i_j}(x) \right| \\
			\leq & \frac{m \left(g_0 F_{i_j} \setminus F_{i_j} \right) + m \left( F_{i_j} \setminus g_0 F_{i_j} \right)}{m \left( F_{i_j} \right)} \left\| f_0 \right\|_{C(X)} \\
			= & \frac{ m \left( F_{i_j} g_0 \Delta F_{i_j} \right) }{m \left( F_{i_j}\right)} \| f_0 \|_{C(X)} \\
			\stackrel{j \to \infty}{\to}	& 0 .
		\end{align*}
		Therefore $\int T_{g_0} f_0 \mathrm{d} \mu = \int f_0 \mathrm{d} \mu$, meaning $\mu$ is $T$-invariant.
		
		We claim that $\mu$ is $f$-maximizing. On one hand, we know that $\int f \mathrm{d} \mu \leq \Gamma(f)$, because $\mu \in \mathcal{M}_T(X)$. Now, suppose that $\nu \in \mathcal{M}_T(X)$. Then
		\begin{align*}
			\int f \mathrm{d} \nu	& = \int_X \frac{1}{m \left( F_{i_j} \right)} T_g f(x) \mathrm{d} m(g) \mathrm{d} \nu(x) \\
			& \leq \left\| \frac{1}{m \left( F_{i_j} \right)} T_g f \mathrm{d} m(g) \right\|_{C(X)} \\
			& = \int f \mathrm{d} \mu_{i_j} \\
			\Rightarrow \int f \mathrm{d} \nu	& \leq \lim_j \int f \mathrm{d} \mu_{i_j} \\
			& = \int f \mathrm{d} \mu .
		\end{align*}
		Therefore $\int f \mathrm{d} \mu \geq \int f \mathrm{d} \nu$ for all $\nu \in \mathcal{M}_T(X)$, meaning that $\int f \mathrm{d} \mu = \sup_{\nu \in \mathcal{M}_T(X)} \int f \mathrm{d} \nu$, i.e. $\mu$ is $f$-maximizing, so $\int f \mathrm{d} \mu = \Gamma(f)$.
	\end{proof}
	
	From this, we can use the gauge to provide a characterization of uniquely ergodic systems.
	
	\begin{Thm}\label{Unique ergodicity and gauge}
		Let $T : G \curvearrowright X$ be a topological dynamical system, where $G$ is amenable, and let $\mu \in \mathcal{M}_T(X)$ be a $T$-invariant Borel probability measure on $X$ that is fully supported on $X$, i.e. gives positive measure to every nonempty open subset of $X$. Then $T : G \curvearrowright X$ is uniquely ergodic if and only if $\Gamma(f) = \int f \mathrm{d} \mu$ for all nonnegative-valued $f \in C_{\mathbb{R}}(X)$.
	\end{Thm}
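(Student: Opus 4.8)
I would prove the two implications separately. The forward direction should be immediate: if $T : G \curvearrowright X$ is uniquely ergodic, then, since $\mu$ is $T$-invariant, $\mu$ is the unique member of $\mathcal{M}_T(X)$, so $\Gamma(f) = \sup\{\int f\, \mathrm{d}\sigma : \sigma \in \mathcal{M}_T(X)\} = \int f\, \mathrm{d}\mu$ for every $f \in C_\mathbb{R}(X)$, in particular for every nonnegative such $f$. (No hypothesis on the support of $\mu$ enters here.)

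For the converse, I would assume $\Gamma(f) = \int f\, \mathrm{d}\mu$ for every nonnegative $f \in C_\mathbb{R}(X)$, fix an arbitrary $\nu \in \mathcal{M}_T(X)$, and aim to show $\nu = \mu$; this forces $\mathcal{M}_T(X) = \{\mu\}$. The one idea that makes this work is to feed the hypothesis not only the given nonnegative $f$ but also the function $\|f\|_{C(X)} - f$ (that is, the constant $\|f\|_{C(X)}$ minus $f$), which is again nonnegative and continuous. Since $\nu \in \mathcal{M}_T(X)$, the definition of $\Gamma$ gives $\int f\, \mathrm{d}\nu \le \Gamma(f) = \int f\, \mathrm{d}\mu$ and likewise $\int (\|f\|_{C(X)} - f)\, \mathrm{d}\nu \le \Gamma(\|f\|_{C(X)} - f) = \int (\|f\|_{C(X)} - f)\, \mathrm{d}\mu$; the second inequality rearranges to $\int f\, \mathrm{d}\nu \ge \int f\, \mathrm{d}\mu$. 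Hence $\int f\, \mathrm{d}\nu = \int f\, \mathrm{d}\mu$ for every nonnegative $f \in C_\mathbb{R}(X)$.

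To close the argument I would extend this equality to all of $C(X)$: writing a real continuous $f$ as $f^+ - f^-$ with $f^\pm = \max(\pm f, 0)$ nonnegative and continuous gives $\int f\, \mathrm{d}\nu = \int f\, \mathrm{d}\mu$ for all $f \in C_\mathbb{R}(X)$, and then for all $f \in C(X)$ by splitting into real and imaginary parts; the Riesz representation theorem then yields $\nu = \mu$. I do not expect a genuine obstacle: the only point requiring care is noticing that the reverse inequality must be extracted by applying the hypothesis to the complementary function $\|f\|_{C(X)} - f$, rather than hoping to obtain it directly from the definition of the gauge. If one prefers a route through the machinery already developed, Theorem \ref{Gauge converges} gives $\|\operatorname{Avg}_{F_i} f\| \to \Gamma(f) = \int f\, \mathrm{d}\mu$, and applying it also to $\|f\|_{C(X)} - f$ controls $\min_x \operatorname{Avg}_{F_i} f(x)$, so $\operatorname{Avg}_{F_i} f$ converges uniformly to the constant $\int f\, \mathrm{d}\mu$, whence unique ergodicity by the classical characterization; but the direct argument is shorter and, as the above shows, does not appear to use the full-support assumption at all.
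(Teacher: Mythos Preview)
Your proof is correct and takes a genuinely different route from the paper. The paper proves the converse by contrapositive: assuming the system is not uniquely ergodic, it picks an ergodic $\nu \neq \mu$ and invokes Jenkinson's theorem (\cite[Theorem 1]{JenkinsonEvery}) to produce a continuous $f$ whose \emph{unique} maximizing measure is $\nu$; after shifting $f$ to be nonnegative, this forces $\Gamma(f) = \int f\,\mathrm{d}\nu \neq \int f\,\mathrm{d}\mu$. Your argument instead proceeds directly: from $\int f\,\mathrm{d}\nu \le \Gamma(f) = \int f\,\mathrm{d}\mu$ together with the same inequality applied to the complementary nonnegative function $\|f\|_{C(X)} - f$, you squeeze out $\int f\,\mathrm{d}\nu = \int f\,\mathrm{d}\mu$ and then conclude $\nu = \mu$ via Riesz.

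What each approach buys: the paper's route illustrates the ergodic-optimization viewpoint that pervades the section, but at the cost of importing a nontrivial external result. Your argument is entirely self-contained and elementary, and your observation is right that the full-support hypothesis on $\mu$ is never invoked---indeed, the paper's own proof does not use it either (it is needed elsewhere in the section, e.g.\ in Theorems \ref{Herman-type result} and \ref{(1) implies (2) implies (3)}, but not here). Your alternative sketch through Theorem \ref{Gauge converges} and uniform convergence is also sound, though as you note it is the longer path.
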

	
	\begin{proof}
		Clearly $\int f \mathrm{d} \mu \leq \Gamma(f)$ for all $f \in C_\mathbb{R}(X)$.
		
		$(\Rightarrow)$ If $T : G \curvearrowright X$ is uniquely ergodic, then $\mu$ is $f$-maximizing for all $f \in C_\mathbb{R}(X)$, so in particular $\int f \mathrm{d} \mu = \Gamma(f)$ for all nonnegative $f \in C_\mathbb{R}(X)$.
		
		$(\Leftarrow)$ We'll prove the contrapositive. Suppose that $T : G \curvearrowright X$ is \emph{not} uniquely ergodic. Then there exists an ergodic $T$-invariant Borel probability measure $\nu \neq \mu$. By \cite[Theorem 1]{JenkinsonEvery}, there exists a continuous real-valued function $f \in C_\mathbb{R}(X)$ such that $\mathcal{M}_\mathrm{max}(f) = \{\nu\}$. By possibly adding a nonnegative constant to $f$, we can assume that $f$ is nonnegative. But $\Gamma(f) = \int f \mathrm{d} \nu \neq \int f \mathrm{d} \mu$.
	\end{proof}
	
	Finally, we want to provide a connection between unique ergodicity and temporo-spatial differentiation problems. Before stating the main theorem relating these, we prove the following lemma that relates the $\alpha_C$ functionals to the $L^\infty$ norm.
	
	\begin{Lem}\label{Quantitative estimates for convergence of STDs}
		Let $(X, \mu)$ be a probability space, and let $f \in L^\infty (X, \mu)$. Then
		$$\frac{1}{2} \|f\|_\infty \leq \sup \left\{ \left| \alpha_C \left( f \right) \right| : \textrm{$C \subseteq X$ measurable, } \mu(C) > 0 \right\} \leq \left\| f \right\|_\infty , $$
		and in particular, if $f$ is real-valued, then
		$$\sup \left\{ \left| \alpha_C (f) \right| : \textrm{$C \subseteq X$ measurable, } \mu(C) > 0 \right\} = \left\| f \right\|_\infty .$$
	\end{Lem}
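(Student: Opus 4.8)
The plan is to dispatch the two easy bounds and the real-valued case directly, and then bootstrap the complex lower bound from the real one.

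First I would establish the right-hand inequality: for any measurable $C \subseteq X$ with $\mu(C) > 0$ we have $|\alpha_C(f)| = \frac{1}{\mu(C)}\left| \int_C f \, \mathrm{d}\mu \right| \le \frac{1}{\mu(C)} \int_C |f| \, \mathrm{d}\mu \le \|f\|_\infty$, so the supremum is at most $\|f\|_\infty$; this also supplies the ``$\le$'' half of the claimed equality in the real-valued case.

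Next, the real-valued case. Since $\alpha_C$ is linear, $\alpha_C(-f) = -\alpha_C(f)$, so the supremum is unchanged under $f \mapsto -f$; using $\|f\|_\infty = \max\{\operatorname{ess\,sup} f,\ \operatorname{ess\,sup}(-f)\}$, I may therefore assume $M := \operatorname{ess\,sup} f = \|f\|_\infty$, the case $M = 0$ being trivial. For $0 < \epsilon < M$ the super-level set $C_\epsilon := \{ x \in X : f(x) > M - \epsilon \}$ has positive measure by the definition of the essential supremum, and $\alpha_{C_\epsilon}(f) \ge M - \epsilon$ because $f > M - \epsilon$ on $C_\epsilon$. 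Letting $\epsilon \downarrow 0$ gives $\sup_C |\alpha_C(f)| \ge M$, hence equality with the upper bound. For the complex case, write $f = u + iv$ with $u = \operatorname{Re} f$ and $v = \operatorname{Im} f$ real-valued and bounded. The pointwise bound $|f|^2 = u^2 + v^2 \le \|u\|_\infty^2 + \|v\|_\infty^2$ a.e.\ yields $\|f\|_\infty^2 \le \|u\|_\infty^2 + \|v\|_\infty^2 \le 2 \max\{\|u\|_\infty, \|v\|_\infty\}^2$, so, assuming without loss of generality $\|u\|_\infty \ge \|v\|_\infty$, we get $\|u\|_\infty \ge \frac{1}{\sqrt{2}} \|f\|_\infty \ge \frac{1}{2}\|f\|_\infty$. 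Since $\alpha_C$ is $\mathbb{C}$-linear and $u, v$ are real-valued, $\operatorname{Re} \alpha_C(f) = \alpha_C(u)$, so $|\alpha_C(f)| \ge |\alpha_C(u)|$ for every $C$; taking suprema over $C$ and invoking the real-valued case applied to $u$ gives $\sup_C |\alpha_C(f)| \ge \|u\|_\infty \ge \frac{1}{2}\|f\|_\infty$.

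I do not expect a serious obstacle: the only points requiring care are the identification of $\|\cdot\|_\infty$ with the maximum of the two one-sided essential suprema and the sign/linearity reductions. It is worth remarking that the constant $\frac{1}{2}$ is not optimal --- decomposing $\{ x : |f(x)| > \|f\|_\infty - \epsilon \}$ into finitely many measurable pieces on which $\arg f$ varies by at most a small angle $\delta$, picking a piece of positive measure, and applying the real-valued argument to $\operatorname{Re}(e^{-i\phi}f)$ there, in fact shows $\sup_C |\alpha_C(f)| = \|f\|_\infty$ for all $f$ --- but the weaker bound above is all that is needed in what follows.
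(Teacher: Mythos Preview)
Your proof is correct and follows essentially the same route as the paper's: the upper bound is immediate, the real case is handled via super-level sets, and the complex case is reduced to the real one by splitting into real and imaginary parts and noting $|\alpha_C(f)| \ge |\alpha_C(\operatorname{Re} f)|$. The only differences are cosmetic --- you use the Pythagorean bound $\|f\|_\infty^2 \le \|u\|_\infty^2 + \|v\|_\infty^2$ (giving the slightly better constant $1/\sqrt{2}$) where the paper uses the triangle inequality $\|f\|_\infty \le \|h_1\|_\infty + \|h_2\|_\infty$, and your closing remark that the sharp constant is in fact $1$ is a nice addition not present in the paper.
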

	
	\begin{proof}
		In either case, it's clear that $|\alpha_C(f)| \leq \|f\|_\infty$ for all $C \subseteq X$ measurable with $\mu(C) > 0$, since $\alpha_C$ is a state on $L^\infty (X, \mu)$
		
		Consider now the case that $f$ is real-valued. If $\left\| f \right\|_\infty = 0$, then the equality is immediate, so suppose that $\left\| f \right\|_\infty > 0$. Set $f^+ = \max(f, 0), f^- = \max(-f, 0)$. Then $\|f\|_\infty = \max \left\{ \left\|f^+\right\|_\infty , \left\|f^-\right\|_\infty \right\}$. Assume without loss of generality that $\left\| f \right\|_\infty = \left\| f^+ \right\|_\infty$. For each $k \in \mathbb{N}$, set
		$$C_k = \left\{ x \in X : f^+(x) > \frac{k}{k + 1} \left\| f^+ \right\|_\infty \right\} = \left\{ x \in X : f(x) > \frac{k}{k + 1} \left\| f\right\|_\infty \right\} .$$
		Then $\mu(C_k) > 0$ and $\alpha_{C_k} \left( f \right) \geq \frac{k}{k + 1} \left\| f \right\|_\infty$ for all $k \in \mathbb{N}$, meaning in particular that
		$$\sup \left\{ \left| \alpha_C \left( f \right) \right| : \textrm{$C \subseteq X$ measurable, } \mu(C) > 0 \right\} \geq \left\| f \right\|_\infty .$$
		
		Now suppose that $f$ is not necessarily real-valued. Let $h_1, h_2 \in L_\mathbb{R}^\infty(X, \mu)$ be the real and imaginary parts of $f$, respectively. Then $\|f\|_\infty \leq \|h_1\|_\infty + \|h_2\|_\infty$. Therefore $\max \left\{ \|h_1\|_\infty , \|h_2\|_\infty \right\} \geq \frac{1}{2} \|f\|_\infty$. Assume without loss of generality that $\|h_1\|_\infty \geq \|h_2\|_\infty$, so $\|h_1\|_\infty \geq \frac{1}{2} \|f\|_\infty$. For each $k \in \mathbb{N}$, choose $C_k' \subseteq X$ measurable such that $\mu \left( C_k' \right) > 0$, and $\alpha_{C_k'}(h_1) \geq \frac{k}{k + 1} \|h_1\|_\infty$, which is possible if we appeal to the real case. Then
		\begin{align*}
			\left| \alpha_{C_k'}(f) \right|	& = \left| \alpha_{C_k'}(h_1) + i \alpha_{C_k'}(h_2) \right| \\
			& \geq \left| \alpha_{C_k'}(h_1) \right| \\
			& \geq \frac{k}{k + 1} \|h_1\|_\infty \\
			& \geq \frac{k}{k + 1} \left( \frac{1}{2} \|f\|_\infty \right) .
		\end{align*}
		Taking the limit as $k \to \infty$ verifies that
		\begin{align*}
			\sup \left\{ \left| \alpha_C \left( f \right) \right| : \textrm{$C \subseteq X$ measurable, } \mu(C) > 0 \right\}	& \geq \frac{1}{2} \|f\|_\infty .
		\end{align*}
	\end{proof}
	
	In the case that $X$ is a compact metrizable space, the measure $\mu$ is Borel, and the $f$ is continuous, Lemma \ref{Quantitative estimates for convergence of STDs} can be sharpened as follows.
	
	\begin{Lem}
		Let $(X, \mu)$ be a probability space, where $X$ is a compact metrizable space and $\mu$ is a Borel probability measure. Let $f \in C (X)$. Then
		$$\frac{1}{2} \|f\|_{\infty} \leq \sup \left\{ \left| \alpha_C \left( f \right) \right| : \textrm{$C \subseteq X$ open, } \mu(C) > 0 \right\} \leq \left\| f \right\|_\infty , $$
		and in particular, if $f$ is real-valued, then we have
		$$\sup \left\{ \left| \alpha_C (f) \right| : \textrm{$C \subseteq X$ open, } \mu(C) > 0 \right\} = \left\| f \right\|_\infty .$$
	\end{Lem}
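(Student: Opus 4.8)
The proof will be a near-verbatim repetition of the argument for Lemma~\ref{Quantitative estimates for convergence of STDs}, the one new input being that continuity of $f$ forces the witnessing sets constructed there to be open. The right-hand inequality $\sup\{|\alpha_C(f)|\} \le \|f\|_\infty$ needs no change, since each $\alpha_C$ is a state on $L^\infty(X,\mu)$ and hence $|\alpha_C(f)| \le \|f\|_\infty$ whether or not $C$ is open. For the other inequalities, note that the open sets of positive measure form only a subfamily of the measurable sets of positive measure, so nothing is inherited for free from Lemma~\ref{Quantitative estimates for convergence of STDs}; the whole task is to exhibit \emph{open} sets achieving the bound.

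First I would treat real-valued $f$, which I claim yields the exact equality. Assume $\|f\|_\infty > 0$ and, exactly as before, reduce to the case $\|f\|_\infty = \|f^+\|_\infty$ by replacing $f$ with $-f$ if necessary (which does not change $|\alpha_C(f)|$). For $k \in \mathbb{N}$ put $C_k = \{x \in X : f(x) > \tfrac{k}{k+1}\|f\|_\infty\}$. Because $f$ is continuous, $C_k = f^{-1}\bigl((\tfrac{k}{k+1}\|f\|_\infty,\infty)\bigr)$ is open; because $\|f\|_\infty = \|f^+\|_\infty > 0$ coincides with $\operatorname{ess\,sup}_\mu f$ and $\tfrac{k}{k+1}\|f\|_\infty < \|f\|_\infty$, the defining property of the essential supremum gives $\mu(C_k) > 0$. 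On $C_k$ we have $f > \tfrac{k}{k+1}\|f\|_\infty$, so $\alpha_{C_k}(f) \ge \tfrac{k}{k+1}\|f\|_\infty$; letting $k \to \infty$ and combining with the upper bound gives $\sup\{|\alpha_C(f)| : C \subseteq X \text{ open},\ \mu(C) > 0\} = \|f\|_\infty$.

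For general $f \in C(X)$, write $f = h_1 + i h_2$ with $h_1, h_2 \in C_\mathbb{R}(X)$ its real and imaginary parts, so $\|f\|_\infty \le \|h_1\|_\infty + \|h_2\|_\infty$ and hence $\max\{\|h_1\|_\infty, \|h_2\|_\infty\} \ge \tfrac12 \|f\|_\infty$; assume without loss of generality $\|h_1\|_\infty \ge \tfrac12\|f\|_\infty$. By the real case applied to $h_1$, for each $k$ there is an open set $C_k'$ with $\mu(C_k') > 0$ and $|\alpha_{C_k'}(h_1)| \ge \tfrac{k}{k+1}\|h_1\|_\infty$, whence $|\alpha_{C_k'}(f)| \ge |\alpha_{C_k'}(h_1)| \ge \tfrac{k}{k+1}\|h_1\|_\infty \ge \tfrac{k}{k+1}\cdot\tfrac12\|f\|_\infty$, and $k \to \infty$ gives the lower bound $\tfrac12\|f\|_\infty$. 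The only point requiring genuine care — though it is a minor one — is the verification that each $C_k$ (resp.\ $C_k'$) has positive measure: this forces one to read $\|\cdot\|_\infty$ as the essential supremum with respect to $\mu$ and to invoke its definition, rather than treating it as the uniform norm on $C(X)$, which could be strictly larger when $\mu$ is not fully supported.
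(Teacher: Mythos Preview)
Your proposal is correct and follows exactly the paper's approach: the paper's proof consists of the single observation that, when $f$ is continuous, the witnessing sets $C_k$ and $C_k'$ constructed in the proof of Lemma~\ref{Quantitative estimates for convergence of STDs} are automatically open, so that proof goes through verbatim. Your write-up is more explicit---in particular your remark that $\|\cdot\|_\infty$ must be read as the essential supremum (so that $\mu(C_k)>0$ is guaranteed even when $\mu$ is not fully supported) is a genuine clarification the paper leaves implicit.
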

	
	\begin{proof}
		Under these conditions, all the $C_k$ and $C_k'$ in the proof of Lemma \ref{Quantitative estimates for convergence of STDs} are open. The result follows from the same proof.
	\end{proof}
	
	A natural corollary of Lemma \ref{Quantitative estimates for convergence of STDs} is the following qualitative statement.
	
	\begin{Thm}\label{Description of uniformity}
		Let $T : G \curvearrowright (X, \mu)$ be a measure-preserving action of a discrete (not necessarily amenable) group $G$ on a probability space $(X, \mu)$. Let $(F_i)_{i \in \mathscr{I}}$ be a net of compact subsets of $G$ with positive measure. Let $f \in L^\infty(X, \mu)$. Then the following conditions are equivalent.
		\begin{enumerate}[label=(\roman*)]
			\item $\operatorname{Avg}_{F_i} f \to \int f \mathrm{d} \mu$ in the norm topology on $L^\infty$.
			\item $\alpha_{C_i} \left( \operatorname{Avg}_{F_i} f \right) \to \int f \mathrm{d} \mu$ for all nets of measurable subsets $C_i$ of $X$ with positive measure.
		\end{enumerate}
	\end{Thm}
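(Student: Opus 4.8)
The plan is to prove $(i) \Rightarrow (ii)$ directly from the fact that each functional $\alpha_C$ is a state on $L^\infty(X, \mu)$ (contractive, fixing constants), and to prove $(ii) \Rightarrow (i)$ by contraposition, manufacturing a ``witness'' net $(C_i)_{i \in \mathscr{I}}$ out of the lower bound $\frac{1}{2}\|g\|_\infty \leq \sup_C |\alpha_C(g)|$ supplied by Lemma \ref{Quantitative estimates for convergence of STDs}.

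For $(i) \Rightarrow (ii)$: since $\alpha_{C_i}$ has operator norm $1$ on $L^\infty$ and sends the constant function $\int f \,\mathrm{d}\mu$ to itself,
$$\left| \alpha_{C_i}\!\left( \operatorname{Avg}_{F_i} f \right) - \int f \,\mathrm{d}\mu \right| = \left| \alpha_{C_i}\!\left( \operatorname{Avg}_{F_i} f - \int f \,\mathrm{d}\mu \right) \right| \leq \left\| \operatorname{Avg}_{F_i} f - \int f \,\mathrm{d}\mu \right\|_\infty ,$$
and the right-hand side tends to $0$ by hypothesis, irrespective of which net $(C_i)$ we took. So $(i)$ forces $(ii)$ with no further work.

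For $(ii) \Rightarrow (i)$, I would argue the contrapositive. Assume $\operatorname{Avg}_{F_i} f$ does \emph{not} converge to $\int f \,\mathrm{d}\mu$ in $L^\infty$. Then there is an $\epsilon > 0$ for which the set $\mathscr{J} := \{ i \in \mathscr{I} : \| \operatorname{Avg}_{F_i} f - \int f \,\mathrm{d}\mu \|_\infty \geq \epsilon \}$ is cofinal in $\mathscr{I}$. For each $i \in \mathscr{J}$, apply Lemma \ref{Quantitative estimates for convergence of STDs} to $g_i := \operatorname{Avg}_{F_i} f - \int f \,\mathrm{d}\mu$: since $\sup\{ |\alpha_C(g_i)| : \mu(C) > 0 \} \geq \tfrac{1}{2}\|g_i\|_\infty \geq \tfrac{\epsilon}{2}$, we may choose a measurable $C_i$ with $\mu(C_i) > 0$ and $|\alpha_{C_i}(g_i)| \geq \tfrac{\epsilon}{4}$; for $i \in \mathscr{I} \setminus \mathscr{J}$ set $C_i = X$, which is harmless (indeed $\alpha_X(\operatorname{Avg}_{F_i} f) = \int \operatorname{Avg}_{F_i} f \,\mathrm{d}\mu = \int f \,\mathrm{d}\mu$ by $T$-invariance of $\mu$). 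Then along the cofinal set $\mathscr{J}$ we have $|\alpha_{C_i}(\operatorname{Avg}_{F_i} f) - \int f \,\mathrm{d}\mu| = |\alpha_{C_i}(g_i)| \geq \tfrac{\epsilon}{4}$, so the net $\big( \alpha_{C_i}(\operatorname{Avg}_{F_i} f) \big)_{i \in \mathscr{I}}$ cannot converge to $\int f \,\mathrm{d}\mu$, i.e. $(ii)$ fails.

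The only genuine subtlety — the ``main obstacle'' such as it is — lies in the net bookkeeping of the reverse direction: converting the failure of norm convergence into a cofinal subset on which the $L^\infty$-distance stays bounded below, and then assembling a single net $(C_i)$ over \emph{all} of $\mathscr{I}$ (padding the complement of $\mathscr{J}$) that still witnesses the failure of $(ii)$. Once that is set up, the argument reduces to a one-line invocation of Lemma \ref{Quantitative estimates for convergence of STDs} together with the contractivity of the states $\alpha_C$; amenability of $G$ plays no role, which is why the theorem is stated for arbitrary discrete $G$.
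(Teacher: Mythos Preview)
Your proof is correct and follows essentially the same approach as the paper: both directions are deduced from the inequalities in Lemma \ref{Quantitative estimates for convergence of STDs}, with the reverse direction handled by contraposition and a choice of witnessing sets $C_i$ realizing at least half the supremum (hence at least $\tfrac14\|f_i\|_\infty$). The only cosmetic difference is that the paper selects such a $C_i$ for \emph{every} $i$ and concludes via $\limsup_i |\alpha_{C_i}(f_i)| \geq \tfrac14 \limsup_i \|f_i\|_\infty > 0$, whereas you isolate a cofinal $\mathscr{J}$ first and pad its complement with $C_i = X$; the underlying idea is identical.
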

	
	\begin{proof}
		This equivalence follows from the estimates in Lemma \ref{Quantitative estimates for convergence of STDs}. For each $i \in \mathscr{I}$, set
		$$f_i = \operatorname{Avg}_{F_i} f - \int f \mathrm{d} \mu .$$
		
		(i)$\Rightarrow$(ii) Suppose that $f_i \to \int f \mathrm{d} \mu$ in $L^\infty$, and let $(C_i)_{i \in \mathscr{I}}$ be a net of measurable subsets $C_i$ of $X$ with positive measure. Then
		$$
		\left| \alpha_{C_i} \left( \operatorname{Avg}_{F_i} f \right) - \int f \mathrm{d} \mu \right| = \left| \alpha_{C_i}\left( f_i \right) \right| \leq \left\| f_i \right\|_\infty \stackrel{i \to \infty}{\to} 0 . $$
		
		(ii)$\Rightarrow$(i) We'll prove $\neg$(i)$\Rightarrow \neg$(ii). Suppose that $\limsup_i \left\| f_i \right\|_\infty > 0$. For each $i \in \mathscr{I}$, choose $C_i \subseteq X$ measurable with positive measure such that 
		$$\left| \alpha_{C_i} \left( f_i \right) \right| \geq \frac{1}{2} \sup \left\{ \left| \alpha_C \left( f_i \right) \right| : \textrm{$C \subseteq X$ measurable, } \mu(C) > 0 \right\} \geq \frac{1}{4} \left\| f_i\right\|_\infty .$$
		Then $\limsup_i \left| \alpha_{C_i} \left( \operatorname{Avg}_{F_i} f \right) - \int f \mathrm{d} \mu \right| = \limsup_i \left| \alpha_{C_i} \left( f_i \right) \right| \geq \frac{1}{4} \limsup_i \left\| f_i \right\|_\infty > 0$.
	\end{proof}
	
	Theorem \ref{Description of uniformity} gives a qualitative description of the conditions under which we get the ``best possible" behavior for a temporo-spatial differentiation problem, i.e. conditions under which $\alpha_{C_i} \left( \operatorname{Avg}_{F_i} f \right) \to \int f \mathrm{d} \mu$ independent of the choice of $C_i$. However, Lemma \ref{Quantitative estimates for convergence of STDs} provides a potential avenue for quantitative estimates on the rate of convergence for temporo-spatial averages by ``importing" estimates on the rate of $L^\infty$-convergence for $\operatorname{Avg}_{F_i} \to \int f \mathrm{d} \mu$.
	
	In general, the classical ergodic theorems don't give us estimates on the rate of convergence they promise, and this convergence can in fact be very slow. See the ``Speed of Convergence" discussion in $\S 1.2$ of \cite{Krengel} for a survey of relevant counterexamples. However, some authors have studied situations where effective estimates on the convergence of certain ergodic averages can be obtained. For a rudimentary example of this type, consider the case where $\alpha \in \mathbb{R} \setminus \mathbb{Q}$ is an irrational real, and $T : \mathbb{Z} \curvearrowright \left( \mathbb{R} / \mathbb{Z} \right)$ is an action of $\mathbb{Z}$ on the circle by $T x = \alpha + x$, where $\mathbb{R} / \mathbb{Z}$ is endowed with its Haar probability measure $\mu$. Then by appealing to the unique ergodicity of $(X, T)$, we can say that $\frac{1}{k} \sum_{j = 0}^{k - 1} T^j f \to \int f \mathrm{d} \mu$ in $L^\infty(X, \mu)$ for all $f \in C(X)$. However, if $f(x) = e^{2 \pi i n x}$ for some $n \in \mathbb{Z} \setminus \{0\}$, i.e. if $f$ is a nontrivial character on $\mathbb{R} / \mathbb{Z}$, then using a geometric series, we can see that $\left\| \frac{1}{k} \sum_{j = 0}^{k - 1} T^j f - \int f \mathrm{d} \mu \right\|_\infty \leq A_{\alpha, n} k^{-1}$ for some constant $A_{\alpha, n} \in (0, \infty)$, yielding a quantitative estimate on that convergence rate. In particular, Lemma \ref{Quantitative estimates for convergence of STDs} tells us that under those circumstances, we'd have that
	$$\left| \alpha_{C_k} \left( \frac{1}{k} \sum_{j = 0}^{k - 1} T^j f \right) \right| \leq k^{-1} A_{\alpha, n} $$
	for all choices of $(C_k)_{k = 1}^\infty$. In this article, we will say no more on this topic, which is linked to the study of effective equidistribution (see \cite{EffectiveEquidistribution}).
	
	\begin{Lem}\label{Herman ergodic theorem in amenable setting}
		Let $T : G \curvearrowright X$ be a topological dynamical system, where $G$ is amenable. Let $(F_i)_{i \in \mathscr{I}}$ be a \Folner \space net for $G$, and let $f \in C(X), \lambda \in \mathbb{C}$. Then the following conditions are related by the implications (i)$\iff$(ii)$\Rightarrow$(iii). If in addition we have that $\mathscr{I} = \mathbb{N}$, i.e. that $(F_i)_{i \in \mathbb{N}}$ is a \Folner \space \emph{sequence}, then (iii)$\Rightarrow$(i).
		\begin{enumerate}[label=(\roman*)]
			\item $\int f \mathrm{d} \mu = \lambda$ for all $T$-invariant Borel probability measures $\mu$ on $X$.
			\item $\operatorname{Avg}_{F_i} f \to \lambda$ uniformly.
			\item $\operatorname{Avg}_{F_i} f(x) \to \lambda$ for all $x \in X$.
		\end{enumerate}
	\end{Lem}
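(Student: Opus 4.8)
The plan is to prove the easy implications directly and then isolate the two substantive ones, (i)$\Rightarrow$(ii) and (iii)$\Rightarrow$(i). The implication (ii)$\Rightarrow$(iii) is immediate, since uniform convergence implies pointwise convergence. For (ii)$\Rightarrow$(i), I would take any $\mu \in \mathcal{M}_T(X)$ and note that by Fubini's theorem together with $T$-invariance,
$$\int_X \operatorname{Avg}_{F_i} f \, \mathrm{d}\mu = \frac{1}{m(F_i)} \int_{F_i} \int_X f(T_g x) \, \mathrm{d}\mu(x) \, \mathrm{d}m(g) = \int_X f \, \mathrm{d}\mu$$
for every $i$; if $\operatorname{Avg}_{F_i} f \to \lambda$ uniformly, then since $\mu$ is a probability measure the left side converges to $\lambda$, so $\int f \, \mathrm{d}\mu = \lambda$. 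As $\mu$ was arbitrary, this gives (i). (No amenability is needed here; it only guarantees $\mathcal{M}_T(X) \ne \emptyset$, so that the condition is non-vacuous.)

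For (i)$\Rightarrow$(ii), I would first reduce to the real-valued case: writing $f = f_1 + i f_2$ with $f_1, f_2 \in C_\mathbb{R}(X)$ and $\lambda = \lambda_1 + i\lambda_2$, condition (i) for $(f,\lambda)$ is equivalent to condition (i) for each $(f_j, \lambda_j)$, and uniform convergence of $\operatorname{Avg}_{F_i} f$ follows from that of $\operatorname{Avg}_{F_i} f_1$ and $\operatorname{Avg}_{F_i} f_2$. So assume $f$ real-valued, and fix $c \ge \|f\|_{C(X)}$, so that $f + c$ and $c - f$ are nonnegative-valued continuous functions. Using $\Gamma(g + t) = \Gamma(g) + t$ and $\operatorname{Avg}_{F_i}(g + t) = \operatorname{Avg}_{F_i} g + t$ for real constants $t$, Theorem \ref{Gauge converges} applied to $f + c$ yields (since $\operatorname{Avg}_{F_i} f + c \ge 0$, so $\|\operatorname{Avg}_{F_i}(f+c)\|_{C(X)} = c + \sup_x \operatorname{Avg}_{F_i} f(x)$)
$$\sup_{\mu \in \mathcal{M}_T(X)} \int f \, \mathrm{d}\mu = \Gamma(f) = \lim_i \sup_{x \in X} \operatorname{Avg}_{F_i} f(x),$$
while applying it to $c - f$ yields
$$\inf_{\mu \in \mathcal{M}_T(X)} \int f \, \mathrm{d}\mu = -\Gamma(-f) = \lim_i \inf_{x \in X} \operatorname{Avg}_{F_i} f(x).$$
Under (i) both extrema over $\mathcal{M}_T(X)$ equal $\lambda$, so $\lim_i \sup_x \operatorname{Avg}_{F_i} f(x) = \lim_i \inf_x \operatorname{Avg}_{F_i} f(x) = \lambda$, and hence $\|\operatorname{Avg}_{F_i} f - \lambda\|_{C(X)} = \max\{\sup_x \operatorname{Avg}_{F_i} f(x) - \lambda,\ \lambda - \inf_x \operatorname{Avg}_{F_i} f(x)\} \to 0$, which is (ii).

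Finally, for (iii)$\Rightarrow$(i) under the hypothesis $\mathscr{I} = \mathbb{N}$: given $\mu \in \mathcal{M}_T(X)$, the same Fubini/invariance computation gives $\int f \, \mathrm{d}\mu = \int \operatorname{Avg}_{F_n} f \, \mathrm{d}\mu$ for every $n$. The functions $\operatorname{Avg}_{F_n} f$ are continuous by Lemma \ref{Averages are continuous}, hence Borel, they satisfy $|\operatorname{Avg}_{F_n} f| \le \|f\|_{C(X)}$ uniformly, and they converge pointwise to $\lambda$ by (iii); so the dominated convergence theorem gives $\int \operatorname{Avg}_{F_n} f \, \mathrm{d}\mu \to \lambda$, whence $\int f \, \mathrm{d}\mu = \lambda$. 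I expect the only real subtlety to be precisely this last step, and it is exactly why (iii)$\Rightarrow$(i) is claimed only for Følner \emph{sequences}: dominated convergence is a statement about sequences, and for a general net $(F_i)_{i\in\mathscr{I}}$, pointwise convergence plus a uniform bound need not force $\int \operatorname{Avg}_{F_i} f \, \mathrm{d}\mu$ to converge. Thus the genuinely non-formal content of the lemma is (i)$\Rightarrow$(ii), which is powered by Theorem \ref{Gauge converges}, while the sequential hypothesis is what makes (iii)$\Rightarrow$(i) go through.
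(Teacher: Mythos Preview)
Your proof is correct. The easy implications and the (iii)$\Rightarrow$(i) step via dominated convergence are handled exactly as in the paper. The one place where your route differs is (i)$\Rightarrow$(ii): the paper argues directly, choosing for each $i$ a point $x_i$ realizing $\left\|\operatorname{Avg}_{F_i} f - \lambda\right\|_{C(X)}$, forming the empirical measures $\mu_i$ with $\int g\,\mathrm{d}\mu_i = \operatorname{Avg}_{F_i} g(x_i)$, passing to a weak*-convergent subnet along which the $\limsup$ is attained, and using the \Folner property to show the limit measure is $T$-invariant, hence integrates $f$ to $\lambda$. This handles complex $f$ in one stroke. You instead reduce to the real case and invoke Theorem \ref{Gauge converges} twice (on $f+c$ and $c-f$) to pin down both $\lim_i \sup_x \operatorname{Avg}_{F_i} f(x)$ and $\lim_i \inf_x \operatorname{Avg}_{F_i} f(x)$. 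Since Theorem \ref{Gauge converges} is itself proved by the same subnet/compactness mechanism, your argument is really a repackaging of the paper's, trading a direct construction for reuse of an earlier result; the cost is the real/imaginary split, the gain is that you do not have to reprove invariance of the limit measure.
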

	
	\begin{proof}
		(i)$\Rightarrow$(ii): Suppose that $\int f \mathrm{d} \mu = \lambda$ for all $T$-invariant Borel probabiliy measures $\mu$ on $X$, and let $(x_i)_{i \in \mathscr{I}}$ be a net in $X$ such that
		\begin{align*}
			\left| \operatorname{Avg}_{F_i} f(x_i) - \lambda \right|	& = \left\| \operatorname{Avg}_{F_i} f - \lambda \right\|_{C(X)}	& (\forall i \in \mathscr{I}) .
		\end{align*}
		Let $(\mu_i)_{i \in I}$ be the net of Borel probability measures on $X$ given by
		\begin{align*}
			\int g \mathrm{d} \mu_i	& = \operatorname{Avg}_{F_i} g(x_i)	& (\forall g \in C(X), i \in \mathscr{I}) .
		\end{align*}
		Appealing to compactness, let $\left( \mu_{i_j} \right)_{j \in \mathscr{J}}$ be a weak*-convergent subnet along which
		$$\lim_j \left| \int f \mathrm{d} \mu_{i_j} - \lambda \right| = \limsup_i \left| \int f \mathrm{d} \mu_i - \lambda \right| .$$
		Let $\mu = \lim_j \mu_{i_j}$. Since $(F_{i_j})_{j \in \mathscr{J}}$ is \Folner, it follows from a classical argument that $\mu$ is $T$-invariant, and
		$$\left| \int f \mathrm{d} \mu - \lambda \right| = \limsup_i \left| \int f \mathrm{d} \mu_i - \lambda \right| = \limsup_i \left\| \operatorname{Avg}_{F_i} f - \lambda \right\|_{C(X)} .$$
		But $\int f \mathrm{d} \mu = \lambda$ by (i), so it follows that $\limsup_i \left\| \operatorname{Avg}_{F_i} f - \lambda \right\|_{C(X)} = 0$, meaning that $\operatorname{Avg}_{F_i} f \to \lambda$ uniformly.
		
		(ii)$\Rightarrow$(i): Trivial.
		
		(ii)$\Rightarrow$(iii): Trivial.
		
		(iii)$\Rightarrow$(i): $(F_i)_{i \in \mathbb{N}}$ is a \Folner \space \emph{sequence}, and let $\mu$ be a Borel probability measure on $X$. Then $\operatorname{Avg}_{F_i} f \stackrel{i \to \infty}{\to} \lambda$ pointwise-almost everywhere, and the functions $\operatorname{Avg}_{F_i} f$ are dominated by the constant function $\|f\|_{C(X)}$, so we can appeal to the Dominated Convergence Theorem to say that
		$$\int f \mathrm{d} \mu = \int \operatorname{Avg}_{F_i} f \mathrm{d} \mu \stackrel{i \to \infty}{\to} \int \lambda \mathrm{d} \mu = \lambda .$$
	\end{proof}
	
	\begin{Rmk}
		The reason we add the caveat that $\mathscr{I} = \mathbb{N}$ to ensure that (iii)$\Rightarrow$(i) in our proof of Lemma \ref{Herman ergodic theorem in amenable setting} is that there is in general no Dominated Convergence Theorem for arbitrary nets. For an elementary example, let $\mathscr{I} = \mathcal{P}_F([0, 1])$ be the net of finite subsets of $[0, 1]$, and define for each $i \in \mathscr{I}$, and for each $S \in \mathscr{I}$, let $f_i \in C(X)$ be a continuous function such that $f_i \vert_i \equiv 1$ and $\int f_i \mathrm{d} \mu \leq 1 / 2$, where $\mu$ is the Lebesgue probability measure on $[0, 1]$. Then $\lim_i f_i(x) = 1$ for all $x \in [0, 1]$, but $\limsup_i \int f_i \mathrm{d} \mu \leq 1/2$.
	\end{Rmk}
	
	The equivalence (i)$\iff$(ii) of Lemma \ref{Herman ergodic theorem in amenable setting} in the case where $G = \mathbb{Z}, \; F_k = \{0, 1, \ldots, k - 1\}$ can be found in \cite[Lemme on pg. 487]{Herman}. This result generalizes the classical result of Oxtoby \cite[(5.3)]{OxtobyErgodic} relating unique ergodicity and uniform convergence of temporal averages, as unique ergodicity is equivalent to $\left\{ \int f \mathrm{d} \nu : \nu \in \mathcal{M}_T(X) \right\}$ being singleton for all $f \in C(X)$. Since this property will be important for the remainder of this section, we introduce the following definition.
	
	\begin{Def}
		Let $T : G \curvearrowright X$ be a topological dynamical system, and let $f \in C(X)$. We say that $f$ is \emph{$T$-Herman} (or simply \emph{Herman}, when $T$ is clear from context) if $\left\{ \int f \mathrm{d} \nu : \nu \in \mathcal{M}_T(X) \right\}$ is singleton.
	\end{Def}
	
	The following theorem tells us that the best kind of convergence for temporo-spatial differentiations can be characterized in terms of ergodic optimization.
	
	\begin{Thm}\label{Herman-type result}
		Let $T : G \curvearrowright X$ be a topological dynamical system, where $G$ is amenable. Let $\mu \in \mathcal{M}_T(X)$ be a $T$-invariant Borel probability measure on $X$. Let $(F_i)_{i \in \mathscr{I}}$ be a \Folner \space net for $G$, and let $f \in C(X)$. Then the following conditions are related by the implications (1)$\Rightarrow $(2)$\Rightarrow$(3), and if $\mu$ is fully supported on $X$, then (3)$\Rightarrow$(1).
		\begin{enumerate}
			\item $f$ is Herman.
			\item For every net $(C_i)_{i \in \mathscr{I}}$ of Borel-measurable sets $C_i$ of positive measure, the net
			$$\left( \alpha_{C_i} \left( \operatorname{Avg}_{F_i} f \right) \right)_{i \in \mathscr{I}}$$
			converges to $\int f \mathrm{d} \mu$.
			\item For every net $(U_i)_{i \in \mathscr{I}}$ of open sets $U_i$ of positive measure, the net
			$$\left( \alpha_{U_i} \left( \operatorname{Avg}_{F_i} f \right) \right)_{i \in \mathscr{I}}$$
			converges to $\int f \mathrm{d} \mu$.
		\end{enumerate}
	\end{Thm}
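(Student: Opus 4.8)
The plan is to route all three implications through Lemma~\ref{Herman ergodic theorem in amenable setting}, which ties the Herman property to \emph{uniform} convergence of the averages $\operatorname{Avg}_{F_i} f$, together with the elementary observation that since $\mu \in \mathcal{M}_T(X)$, the common value appearing in the definition of ``Herman'' must be $\int f \,\mathrm{d}\mu$. For (1)$\Rightarrow$(2): if $f$ is Herman, then $\{\int f\,\mathrm{d}\nu : \nu \in \mathcal{M}_T(X)\} = \{\lambda\}$ with $\lambda = \int f\,\mathrm{d}\mu$, so the equivalence (i)$\iff$(ii) of Lemma~\ref{Herman ergodic theorem in amenable setting} gives $\left\| \operatorname{Avg}_{F_i} f - \lambda \right\|_{C(X)} \to 0$; then for any net $(C_i)_{i \in \mathscr{I}}$ of positive-measure Borel sets, linearity of $\alpha_{C_i}$ (together with $\alpha_{C_i}(\lambda) = \lambda$) and the trivial bound $|\alpha_{C_i}(h)| \le \|h\|_{C(X)}$ for $h \in C(X)$ yield $\left| \alpha_{C_i}(\operatorname{Avg}_{F_i} f) - \lambda \right| = \left| \alpha_{C_i}(\operatorname{Avg}_{F_i} f - \lambda) \right| \le \left\| \operatorname{Avg}_{F_i} f - \lambda \right\|_{C(X)} \to 0$, which is (2). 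The implication (2)$\Rightarrow$(3) is immediate, since a net of open sets of positive measure is in particular a net of positive-measure Borel sets.

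For (3)$\Rightarrow$(1) under the full-support hypothesis, I would argue by contraposition. If $f$ is not Herman, then condition (i) of Lemma~\ref{Herman ergodic theorem in amenable setting} fails for the specific value $\lambda = \int f\,\mathrm{d}\mu$ (for this $\lambda$ it is equivalent to $f$ being Herman), hence so does (ii); writing $g_i := \operatorname{Avg}_{F_i} f - \int f\,\mathrm{d}\mu \in C(X)$ (continuous by Lemma~\ref{Averages are continuous}), this says precisely that $c := \limsup_i \left\| g_i \right\|_{C(X)} > 0$. For each $i$, put $U_i = X$ if $g_i \equiv 0$; otherwise choose $x_i \in X$ with $|g_i(x_i)| = \|g_i\|_{C(X)}$ (compactness and continuity) and, by continuity of $g_i$, an open neighbourhood $U_i \ni x_i$ on which $|g_i - g_i(x_i)| < \frac{1}{2} \|g_i\|_{C(X)}$. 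In either case $\mu(U_i) > 0$ because $\mu$ is fully supported, and estimating the average of $g_i - g_i(x_i)$ over $U_i$ by the triangle inequality gives $|\alpha_{U_i}(g_i)| \ge |g_i(x_i)| - \frac{1}{2} \|g_i\|_{C(X)} = \frac{1}{2} \|g_i\|_{C(X)}$ (trivially so in the degenerate case). Hence $\limsup_i \left| \alpha_{U_i}(\operatorname{Avg}_{F_i} f) - \int f\,\mathrm{d}\mu \right| = \limsup_i |\alpha_{U_i}(g_i)| \ge \frac{c}{2} > 0$, so the net in (3) fails to converge to $\int f\,\mathrm{d}\mu$.

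The routine parts are the two easy implications; the one step that requires thought is this last construction, where one must convert the failure of \emph{uniform} convergence of $\operatorname{Avg}_{F_i} f$ (a statement about the sup-norm over all of $X$) into the failure of convergence of the \emph{spatial} averages $\alpha_{U_i}(\operatorname{Avg}_{F_i} f)$ along open sets of positive measure. Continuity of the $g_i$ lets us localize near a point where $|g_i|$ is maximal, and the hypothesis that $\mu$ is fully supported is exactly what keeps those small neighbourhoods from being $\mu$-null; without full support the implication genuinely fails, since a point where $|g_i|$ is large may lie outside $\operatorname{supp}\mu$. One must also be slightly careful in working with limsups of nets rather than sequential limits, but no real difficulty arises there.
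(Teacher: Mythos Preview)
Your proof is correct, and for the contrapositive $\neg(1)\Rightarrow\neg(3)$ it takes a genuinely different route from the paper. The paper reduces to the case where $f$ is real-valued, shifts to a nonnegative function $g$, invokes the gauge machinery (Theorem~\ref{Gauge converges}) to know that $\lim_i \|\operatorname{Avg}_{F_i} g\|_{C(X)} = \Gamma(g) > \int g\,\mathrm{d}\mu$, and then takes $U_i$ to be the superlevel set $\{\operatorname{Avg}_{F_i} g > L\}$ for some $L \in (\int g\,\mathrm{d}\mu,\Gamma(g))$. Your argument bypasses all of this: you go straight from the failure of (i) in Lemma~\ref{Herman ergodic theorem in amenable setting} to the failure of uniform convergence of $g_i = \operatorname{Avg}_{F_i} f - \int f\,\mathrm{d}\mu$, then localise near a maximiser of $|g_i|$ to produce an open $U_i$ with $|\alpha_{U_i}(g_i)| \geq \tfrac{1}{2}\|g_i\|_{C(X)}$. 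This works directly for complex-valued $f$, avoids Theorem~\ref{Gauge converges} entirely, and is shorter. The paper's approach, on the other hand, keeps the ergodic-optimisation framing front and centre, which is thematically consistent with the section and makes the later Theorem~\ref{(1) implies (2) implies (3)} a near-verbatim repetition; your argument trades that structural parallelism for economy.
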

	
	\begin{proof}
		(1)$\Rightarrow$(2) Suppose that $f$ is Herman, and let $(F_i)_{i \in \mathscr{I}}$ be a \Folner \space net for $G$. Then by Lemma \ref{Herman ergodic theorem in amenable setting}, the net $\left( \operatorname{Avg}_{F_i} f \right)_{i \in \mathscr{I}}$ converges in $C(X)$-norm to $\int f \mathrm{d} \mu$, and since $\| \cdot \|_\infty \leq \|\cdot\|_{C(X)}$, it follows that $\operatorname{Avg}_{F_i} f \to \int f \mathrm{d} \mu$ in $L^\infty(X, \mu)$. Therefore (1)$\Rightarrow$(2) follows from Theorem \ref{Description of uniformity}.
		
		(2)$\Rightarrow$(3) Trivial.
		
		$\neg$(1)$\Rightarrow \neg$(3) Suppose that $\mu$ is fully supported. For this direction, we can assume that $f$ is real-valued, since otherwise we can break $f$ into its real and imaginary parts and consider those parts separately. So for the remainder of this proof, we can assume that $f$ is real-valued.
		
		Suppose that $f$ is not Herman, and that $\mu$ is strictly positive. Set
		\begin{align*}
			m_1	& = \min \left\{ \int f \mathrm{d} \nu : \nu \in \mathcal{M}_T(X) \right\} , \\
			m_2	& = \max \left\{ \int f \mathrm{d} \nu : \nu \in \mathcal{M}_T(X) \right\} .
		\end{align*}
		If $\left\{ \int f \mathrm{d} \mu : \mathcal{M}_T(X) \right\}$ is not singleton, then $m_1 < m_2$, and in particular this tells us that at least one of the inequalities $\int f \mathrm{d} \mu < m_2, \int f \mathrm{d} \mu > m_1$ is true. We consider two cases:

		\textbf{Case (i):} Consider the case where $m_2 > \int f \mathrm{d} \mu$. Set $g = f + \|f\|_{C(X)}$, which is a nonnegative-valued function with
		$$\Gamma(g) = m_2 + \|f\|_{C(X)} > \int f \mathrm{d} \mu + \|f\|_{C(X)} = \int g \mathrm{d} \mu .$$
		Choose $L \in \left( \int g \mathrm{d} \mu , \Gamma(g) \right)$. For each $i \in \mathscr{I}$, set
		$$U_i = \begin{cases}
			\left\{ x \in X : \operatorname{Avg}_{F_i} g(x) > L \right\}	& \textrm{if $\left\| \operatorname{Avg}_{F_i} g \right\|_{C(X)} > L$} , \\
			X	& \textrm{if $\left\| \operatorname{Avg}_{F_i} g \right\|_{C(X)} \leq L$} .
		\end{cases}$$
		Because each $U_i$ is a nonempty open set, and $\mu$ is fully supported, we know that each $U_i$ has positive measure. By Theorem \ref{Gauge converges}, we know that
		$$\lim_i \left\| \operatorname{Avg}_{F_i} g \right\| = \Gamma(g) > L > \int g \mathrm{d} \mu.$$ Thus $\limsup_i \alpha_{U_i} \left( \operatorname{Avg}_{F_i} g \right) \geq L > \int g \mathrm{d} \mu = \int f \mathrm{d} \mu + \|f\|_{C(X)}$, so
		$$\limsup_i \alpha_{U_i} \left( \operatorname{Avg}_{F_i} f \right) = \limsup_i \alpha_{U_i} \left( \operatorname{Avg}_{F_i} g - \|f\|_{C(X)} \right) > \int f \mathrm{d} \mu . $$
		
		\textbf{Case (ii):} Suppose $m_1 < \int f \mathrm{d} \mu$. Consider $g = \|f\|_{C(X)} - f$, a nonnegative-valued function. Then for $\nu \in \mathcal{M}_T(X)$, we have
		\begin{align*}
			\int g \mathrm{d} \nu	& = \|f\|_{C(X)} - \int f \mathrm{d} \nu \\
			\Rightarrow \Gamma(g)	& = \|f\|_{C(X)} - m_1 \\
			& > \|f\|_{C(X)} - \int f \mathrm{d} \mu \\
			& = \int g \mathrm{d} \mu .
		\end{align*}
		Choose $L \in (\int g \mathrm{d} \mu, \Gamma(g))$. Construct open subsets $U_i$ of $X$ by
		$$U_i = \begin{cases}
			\left\{ x \in X : \operatorname{Avg}_{F_i} g(x) > L \right\}	& \textrm{if $\left\| \operatorname{Avg}_{F_i} g \right\| > L$} , \\
			X	& \textrm{if $\left\| \operatorname{Avg}_{F_i} g \right\| \leq L$} .
		\end{cases}$$
		Then by a similar argument to that used in Case (i), we know that \linebreak$\limsup_i \alpha_{U_i} \left( \operatorname{Avg}_{F_i} g \right) \geq L > \int g \mathrm{d} \mu$. It then follows that
		\begin{align*}
			\liminf_i \alpha_{U_i} \left( \operatorname{Avg}_{F_i} f \right)	& = \liminf_i \alpha_{U_i} \left( \operatorname{Avg}_{F_i} \left( \|f\|_{C(X)} - g \right) \right) \\
			& = \|f\|_{C(X)} - \limsup_i \alpha_{U_i} (\operatorname{Avg}_{F_i} g) \\
			& < \|f\|_{C(X)} - \int g \mathrm{d} \mu \\
			& = \int f \mathrm{d} \mu .
		\end{align*}
	\end{proof}
	
	We now come to a theorem which provides a qualitative connection between unique ergodicity and temporo-spatial differentiation problems.
	
	\begin{Thm}\label{(1) implies (2) implies (3)}
		Let $T : G \curvearrowright X$ be a topological dynamical system, where $G$ is amenable. Let $\mu \in \mathcal{M}_T(X)$ be a $T$-invariant Borel probability on $X$. Let $(F_i)_{i \in \mathscr{I}}$ be a \Folner \space net for $G$. Then the following conditions are related by the implications (1)$\Rightarrow $(2)$\Rightarrow$(3), and if $\mu$ is fully supported on $X$, then (3)$\Rightarrow$(1).
		\begin{enumerate}
			\item $T : G \curvearrowright X$ is uniquely ergodic.
			\item For every net $(C_i)_{i \in \mathscr{I}}$ of Borel-measurable sets $C_i$ of positive measure, the net
			$$\left( \alpha_{C_i} \left( \operatorname{Avg}_{F_i} f \right) \right)_{i \in \mathscr{I}}$$
			converges to $\int f \mathrm{d} \mu$ for all $f \in C(X)$.
			\item For every net $(U_i)_{i \in \mathscr{I}}$ of open sets $U_i$ of positive measure, the net
			$$\left( \alpha_{U_i} \left( \operatorname{Avg}_{F_i} f \right) \right)_{i \in \mathscr{I}}$$
			converges to $\int f \mathrm{d} \mu$ for all $f \in C(X)$.
		\end{enumerate}
	\end{Thm}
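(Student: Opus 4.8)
The plan is to derive this theorem as a corollary of Theorem~\ref{Herman-type result} by ranging over all continuous functions. The bridge I would set up first is the elementary observation that $T : G \curvearrowright X$ is uniquely ergodic if and only if every $f \in C(X)$ is Herman. One direction is immediate: if $\mathcal{M}_T(X) = \{\mu\}$, then $\left\{ \int f \mathrm{d}\nu : \nu \in \mathcal{M}_T(X) \right\}$ is the singleton $\left\{ \int f \mathrm{d}\mu \right\}$ for each $f$. For the converse, if every $f \in C(X)$ is Herman and $\nu_1, \nu_2 \in \mathcal{M}_T(X)$, then $\int f \mathrm{d}\nu_1 = \int f \mathrm{d}\nu_2$ for all $f \in C(X)$, so $\nu_1 = \nu_2$ by the Riesz representation theorem; since amenability of $G$ guarantees $\mathcal{M}_T(X) \neq \emptyset$, this makes $\mathcal{M}_T(X)$ a singleton, i.e. $T$ is uniquely ergodic.

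With that in hand, I would argue (1)$\Rightarrow$(2) by noting that unique ergodicity makes each $f \in C(X)$ Herman, so Theorem~\ref{Herman-type result}, implication (1)$\Rightarrow$(2), applied to each such $f$ and to the fixed \Folner net $(F_i)_{i \in \mathscr{I}}$, yields exactly condition (2). The implication (2)$\Rightarrow$(3) is trivial, since open sets of positive measure are in particular Borel-measurable sets of positive measure. For (3)$\Rightarrow$(1) under the hypothesis that $\mu$ is fully supported, I would fix an arbitrary $f \in C(X)$; then condition (3) here is precisely condition (3) of Theorem~\ref{Herman-type result} for this $f$, so its implication (3)$\Rightarrow$(1) (which is exactly where full support of $\mu$ is used) tells us $f$ is Herman. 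As $f$ was arbitrary, every continuous function is Herman, and the bridge observation above finishes the proof.

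The only point requiring care is quantifier order: Theorem~\ref{Herman-type result} is phrased for a single fixed $f$, so I must invoke it separately for each continuous function, which is legitimate because the group $G$, the system $T : G \curvearrowright X$, the measure $\mu$, and the \Folner net $(F_i)_{i \in \mathscr{I}}$ are all held fixed throughout. I do not expect any substantive obstacle: all of the analytic content has already been carried out in Theorem~\ref{Herman-type result} (and, beneath it, in Theorem~\ref{Gauge converges} and Lemma~\ref{Quantitative estimates for convergence of STDs}), so this statement is essentially a repackaging of that work together with the standard fact that a $T$-invariant Borel probability measure on a compact metrizable space is determined by its integrals against continuous functions.
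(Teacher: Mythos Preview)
Your proof is correct and follows essentially the same strategy as the paper: both reduce to Theorem~\ref{Herman-type result} via the observation that unique ergodicity is equivalent to every $f \in C(X)$ being Herman. The only minor difference is in the direction (3)$\Rightarrow$(1): you invoke the (3)$\Rightarrow$(1) implication of Theorem~\ref{Herman-type result} for each $f$, whereas the paper instead appeals to Theorem~\ref{Unique ergodicity and gauge} to produce a single nonnegative $f$ with $\int f \,\mathrm{d}\mu < \Gamma(f)$ and then rebuilds the open sets $U_i$ by hand --- effectively reproving a special case of what Theorem~\ref{Herman-type result} already established. Your route is slightly more economical; the paper's is more self-contained for that step but duplicates work.
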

	
	\begin{proof}
		(1)$\Rightarrow$(2) The unique ergodicity of $T : G \curvearrowright X$ is equivalent to every $f \in C(X)$ being Herman. Apply Theorem \ref{Herman-type result}.
		
		(2)$\Rightarrow$(3) Trivial.
		
		$\neg$(1)$\Rightarrow \neg$(3) Suppose that $T : G \curvearrowright X$ is not uniquely ergodic, and that $\mu$ is strictly positive. By Theorem \ref{Unique ergodicity and gauge}, there exists a nonnegative-valued $f \in C_\mathbb{R} (X)$ such that $\int f \mathrm{d} \mu < \Gamma(f) = \lim_i \left\| \operatorname{Avg}_{F_i} f \right\|$. Let $L \in \left( \int f \mathrm{d} \mu , \Gamma(f) \right)$. For each $i \in \mathscr{I}$, set
		$$U_i = \begin{cases}
			\left\{ x \in X : \operatorname{Avg}_{F_i} f(x) > L \right\}	& \textrm{if $\left\| \operatorname{Avg}_{F_i} f \right\| > L$} , \\
			X	& \textrm{if $\left\| \operatorname{Avg}_{F_i} f \right\| \leq L$} .
		\end{cases}$$
		Because each $U_i$ is a nonempty open set, and $\mu$ is fully supported, we know that each $U_i$ has positive measure. Thus $\limsup_i \alpha_{U_i} \left( \operatorname{Avg}_{F_i} f \right) \geq L > \int f \mathrm{d} \mu$.
	\end{proof}
	
	In the event that we're dealing not just with a \Folner \space net, but instead a \Folner \space \emph{sequence}, we can make a stronger claim: that unique ergodicity is equivalent to all the temporo-spatial differentiations of continuous functions by that temporal averaging sequence converging.
	
	\begin{Thm}\label{Herman extension of the convergence result}
		Let $T : G \curvearrowright X$ be a topological dynamical system, where $G$ is amenable. Let $\mu \in \mathcal{M}_T(X)$ be a $T$-invariant Borel probability measure on $X$, and let $(F_k)_{k = 1}^\infty$ be a \Folner \space sequence. Let $f \in C(X)$. Then the following conditions are related by the implications (1)$\Rightarrow $(2)$\Rightarrow$(3)$\Rightarrow$(4), and if $\mu$ is fully supported on $X$, then (4)$\Rightarrow$(1).
		\begin{enumerate}
			\item $f$ is Herman.
			\item For every sequence $(C_k)_{k = 1}^\infty$ of Borel-measurable sets $C_k$ of positive measure, the sequence
			$$\left( \alpha_{C_k} \left( \operatorname{Avg}_{F_k} f \right) \right)_{k = 1}^\infty$$
			converges to $\int f \mathrm{d} \mu$.
			\item For every sequence $(U_k)_{k = 1}^\infty$ of open sets $U_k$ of positive measure, the sequence
			$$\left( \alpha_{U_k} \left( \operatorname{Avg}_{F_k} f \right) \right)_{k = 1}^\infty$$
			converges to $\int f \mathrm{d} \mu$.
			\item For every sequence $(U_k)_{k = 1}^\infty$ of open sets $U_k$ of positive measure, the sequence
			$$\left( \alpha_{U_k} \left( \operatorname{Avg}_{F_k} f \right) \right)_{k = 1}^\infty$$
			converges to some complex number.
		\end{enumerate}
		
		Furthermore, if $\left\{ \int f \mathrm{d} \nu : \nu \in \mathcal{M}_T(X) \right\}$ is not singleton, the measure $\mu$ is fully supported, and the space $(X, \mu)$ is atomless, then we can choose a sequence $\left(U_k'\right)_{k = 1}^\infty$ of open subsets of $X$ with positive measure and a continuous $f \in C(X)$ such that $\left( \alpha_{U_k'} \left( \operatorname{Avg}_{F_k} f \right) \right)_{k = 1}^\infty$ diverges and $\mu\left( U_k' \right) \searrow 0$.
	\end{Thm}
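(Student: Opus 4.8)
The chain $(1)\Rightarrow(2)\Rightarrow(3)\Rightarrow(4)$ needs almost nothing new. For $(1)\Rightarrow(2)$ one may simply invoke the sequence case of Theorem~\ref{Herman-type result}; equivalently, if $f$ is Herman then $\operatorname{Avg}_{F_k}f\to\int f\,\mathrm{d}\mu$ uniformly by Lemma~\ref{Herman ergodic theorem in amenable setting}, hence in $L^\infty$, and Theorem~\ref{Description of uniformity} applies. The implications $(2)\Rightarrow(3)$ and $(3)\Rightarrow(4)$ are trivial. For $(4)\Rightarrow(1)$ under full support I would prove the contrapositive, and since the construction involved is a slightly weaker form of the one required by the final ``furthermore'' clause, I would prove the two together.

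So suppose $f$ is not Herman. Replacing $f$ by $\operatorname{Re}f$ or $\operatorname{Im}f$ --- at least one of which is still non-Herman, and a divergent real part forces a divergent complex average --- we may assume $f\in C_{\mathbb{R}}(X)$. Set $m_1=\min\{\int f\,\mathrm{d}\nu:\nu\in\mathcal{M}_T(X)\}<m_2=\max\{\int f\,\mathrm{d}\nu:\nu\in\mathcal{M}_T(X)\}$ and apply Theorem~\ref{Gauge converges} to the nonnegative continuous functions $f+\|f\|_{C(X)}$ and $\|f\|_{C(X)}-f$. Since $\operatorname{Avg}_{F_k}$ is linear and positive and fixes constants, and the sup-norm of a nonnegative continuous function equals its maximum, this gives
$$M_k:=\max_{x\in X}\operatorname{Avg}_{F_k}f(x)\longrightarrow m_2,\qquad L_k:=\min_{x\in X}\operatorname{Avg}_{F_k}f(x)\longrightarrow m_1.$$
Put $\delta=\tfrac{1}{4}(m_2-m_1)>0$. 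For even $k$ let $O_k=\{x:\operatorname{Avg}_{F_k}f(x)>M_k-\delta\}$ and for odd $k$ let $O_k=\{x:\operatorname{Avg}_{F_k}f(x)<L_k+\delta\}$; each $O_k$ is open and contains a point at which $\operatorname{Avg}_{F_k}f$ attains its maximum (resp.\ minimum), hence is nonempty, hence of positive measure because $\mu$ is fully supported. With $U_k=O_k$ we get $\alpha_{U_k}(\operatorname{Avg}_{F_k}f)\ge M_k-\delta$ for even $k$ and $\alpha_{U_k}(\operatorname{Avg}_{F_k}f)\le L_k+\delta$ for odd $k$, so
$$\limsup_k\alpha_{U_k}(\operatorname{Avg}_{F_k}f)\ge m_2-\delta>m_1+\delta\ge\liminf_k\alpha_{U_k}(\operatorname{Avg}_{F_k}f),$$
and the sequence diverges; this is $\neg(4)$, proving $(4)\Rightarrow(1)$.

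For the final assertion, keep the $O_k$ but shrink $U_k$ to a small ball inside it. Pick $x_k\in O_k$; as $X$ is a compact metric space and $\mu$ is atomless, $\mu(\overline{B(x_k,r)})\downarrow\mu(\{x_k\})=0$ as $r\downarrow0$, so we may choose $r_k>0$ with $\overline{B(x_k,r_k)}\subseteq O_k$ and $\mu(B(x_k,r_k))<\min\{1/k,\mu(U'_{k-1})\}$ (with no constraint when $k=1$), and set $U'_k=B(x_k,r_k)$. Then $U'_k$ is a nonempty open set, so $\mu(U'_k)>0$, while $\mu(U'_k)\searrow0$ by construction; and since $U'_k\subseteq O_k$ the same one-sided estimates persist, so $(\alpha_{U'_k}(\operatorname{Avg}_{F_k}f))_k$ still diverges. (If $f$ was replaced by a real or imaginary part above, that real function is the continuous function whose existence is asserted.)

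The one place requiring care --- and the only real obstacle --- is the choice of targets for the spatial averages. The naive idea of forcing oscillation by interleaving open sets on which $\alpha_{U'_k}(\operatorname{Avg}_{F_k}f)$ is near $\int f\,\mathrm{d}\mu$ with sets on which it is not breaks down at the small-set scale: when $X$ is disconnected, $\operatorname{Avg}_{F_k}f$ may have a gap in its range straddling $\int f\,\mathrm{d}\mu$, so no small open set has spatial average close to $\int f\,\mathrm{d}\mu$. The fix is to abandon $\int f\,\mathrm{d}\mu$ as a target and aim instead at the two endpoints $m_1<m_2$ of the ergodic-optimization interval; these are approximately attained on small open sets precisely because Theorem~\ref{Gauge converges}, applied to $f\pm\|f\|_{C(X)}$, forces the maximum and minimum of $\operatorname{Avg}_{F_k}f$ to converge to them. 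After that, atomlessness is used only to whittle down an open set that is already nonempty, which is routine.
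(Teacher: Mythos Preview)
Your proof is correct, and for the contrapositive $\neg(1)\Rightarrow\neg(4)$ it follows a genuinely different (and arguably cleaner) route than the paper.

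The paper first reduces to nonnegative $f$ and then splits into two cases according to whether $\int f\,\mathrm d\mu<m_2$ or $\int f\,\mathrm d\mu>m_1$. In Case~(i) it fixes thresholds $\int f\,\mathrm d\mu<L<M<\Gamma(f)$, takes $V_k=\{\operatorname{Avg}_{F_k}f>M\}$ and $W_k=\{\operatorname{Avg}_{F_k}f<L\}$, and alternates; here $V_k$ is only eventually nonempty (via Theorem~\ref{Gauge converges}), while the nonemptiness of $W_k$ comes from the integral identity $\int\operatorname{Avg}_{F_k}f\,\mathrm d\mu=\int f\,\mathrm d\mu$. Case~(ii) is reduced to Case~(i) by passing to $\|f\|-f$. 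Your approach instead applies Theorem~\ref{Gauge converges} \emph{twice}, to $f+\|f\|$ and to $\|f\|-f$, yielding $\max_X\operatorname{Avg}_{F_k}f\to m_2$ and $\min_X\operatorname{Avg}_{F_k}f\to m_1$ simultaneously; you then use moving thresholds $M_k-\delta$ and $L_k+\delta$. This buys you two things: the oscillation sets $O_k$ are nonempty for \emph{every} $k$ (they contain an extremizer), so no ``eventually'' bookkeeping is needed, and the value $\int f\,\mathrm d\mu$ never enters, so the case split disappears entirely. The atomless refinement and the chain $(1)\Rightarrow(2)\Rightarrow(3)\Rightarrow(4)$ match the paper essentially verbatim.

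One small remark on your closing parenthetical: since $\operatorname{Re}\alpha_{U'_k}(\operatorname{Avg}_{F_k}f)=\alpha_{U'_k}(\operatorname{Avg}_{F_k}\operatorname{Re}f)$, divergence for the real (or imaginary) part already forces divergence for the original $f$, so you need not retreat to ``that real function.''
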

	
	\begin{proof}
		That (1)$\Rightarrow$(2)$\Rightarrow$(3) follows immediately from Theorem \ref{Herman-type result}, and (3)$\Rightarrow$(4) is trivial. Now we'll show that if $\mu$ is fully supported, then $\neg$(1)$\Rightarrow \neg$(4). Suppose that $f$ is \emph{not} Herman. We can consider the case where $f$ is real-valued, since otherwise we can break $f$ into its real and imaginary parts. Moreover, we can assume that $f$ is nonnegative-valued, since otherwise we can just replace $f$ with $f + \|f\|_{C(X)}$. So for the remainder of this proof, we assume that $f$ is nonnegative-valued.
		
		Set
		\begin{align*}
			m_1	& = \min \left\{ \int f \mathrm{d} \nu : \nu \in \mathcal{M}_T(X) \right\} , \\
			m_2	& = \max \left\{ \int f \mathrm{d} \nu : \nu \in \mathcal{M}_T(X) \right\} .
		\end{align*}
		If $\left\{ \int f \mathrm{d} \mu : \mathcal{M}_T(X) \right\}$ is not singleton, then $m_1 < m_2$, and in particular this tells us that at least one of the inequalities $\int f \mathrm{d} \mu < m_2, \int f \mathrm{d} \mu > m_1$ is true.

		\textbf{Case (i):} Consider first the case where $m_2 > \int f \mathrm{d} \mu$. Choose $L, M \in \mathbb{R}$ such that $\int f \mathrm{d} \mu < L < M < \Gamma(f)$. Define open sets $V_k, W_k \subseteq X$ for $k \in \mathbb{N}$ by
		\begin{align*}
			V_k	& = \left\{ x \in X : \operatorname{Avg}_{F_k} f(x) > M \right\} , \\
			W_k	& = \left\{ x \in X : \operatorname{Avg}_{F_k} f(x) < L \right\} .
		\end{align*}
		Both sets are obviously open, since they're preimages of open subsets of $\mathbb{R}$ under the continuous functions $\operatorname{Avg}_{F_k} f \in C_\mathbb{R}(X)$.
		
		First, we know that there exists $K \in \mathbb{N}$ such that $V_k \neq \emptyset$ for all $k \geq K$. This is because we know there exists $K \in \mathbb{N}$ in $X$ such that $\left\| \operatorname{Avg}_{F_k} f \right\|_{C(X)} > M$ for all $k \geq K$, and by the Extreme Value Theorem, we know there exists a sequence $(x_k)_{k = 1}^\infty$ such that
		$$\left\| \operatorname{Avg}_{F_k} f(x) \right\|_{C(X)} = \operatorname{Avg}_{F_k} f(x_k)$$
		for all $k \in \mathbb{N}$. In particular, if $k \geq K$, then $x_k \in V_k$. Therefore $V_k \neq \emptyset$ for all $k \geq K$.
		
		Secondly, we claim that $W_k$ is nonempty for all $k \in \mathbb{N}$. To see this, suppose to the contrary that $W_k = \emptyset$ for some $k \in \mathbb{N}$. Then $f(x) \geq L > \int f \mathrm{d} \mu$ for all $x \in X$, meaning that $\int f \mathrm{d} \mu \geq L > \int f \mathrm{d} \mu$, a clear contradiction. So $W_k \neq \emptyset$ for all $k \in \mathbb{N}$.
		
		Now, define a sequence $(U_k)_{k = 1}^\infty$ of nonempty open subsets of $X$ by
		$$
		U_k = \begin{cases}
			X	& \textrm{if } k < K , \\
			V_k	& \textrm{if } k \geq K \textrm{ is odd}, \\
			W_k	& \textrm{if } k \geq K	\textrm{ is even} .
		\end{cases}
		$$
		Then
		\begin{align*}
			\limsup_{k \to \infty} \alpha_{U_k} \left( \operatorname{Avg}_{F_k} f \right)	& \geq \limsup_{k \to \infty} \alpha_{U_{2k + 1}} \left( \operatorname{Avg}_{F_{2k + 1}} f \right) \\
			& = \limsup_{k \to \infty} \alpha_{V_{2k + 1}} \left( \operatorname{Avg}_{F_{2k + 1}} f \right) \\
			& \geq \limsup_{k \to \infty} \alpha_{V_{2k + 1}} \left( M \right)\\
			& = M , \\
			\liminf_{k \to \infty} \alpha_{U_k} \left( \operatorname{Avg}_{F_k} f \right)	& \leq \liminf_{k \to \infty} \alpha_{U_{2k}} \left( \operatorname{Avg}_{F_{2k}} f \right) \\
			& = \liminf_{k \to \infty} \alpha_{W_{2k}} \left( \operatorname{Avg}_{F_{2k}} f \right) \\
			& \leq \liminf_{k \to \infty} \alpha_{W_{2k}} \left( L \right) \\
			& = L .
		\end{align*}
		Therefore
		$$\liminf_{k \to \infty} \alpha_{U_k} \left( \operatorname{Avg}_{F_k} f \right) \leq L < M \leq \limsup_{k \to \infty} \alpha_{U_k} \left( \operatorname{Avg}_{F_k} f \right) ,$$
		meaning the sequence diverges.
		
		\textbf{Case (ii):} Consider now the case where $m_1 < \int f \mathrm{d} \mu$. Replacing $f$ with $f' = \|f\|_{C(X)} - f$, another nonnegative-valued continuous function, we see that
		$$\int f' \mathrm{d} \mu = \|f\|_{C(X)} - \int f \mathrm{d} \mu < \|f\|_{C(X)} - m_1 = \max \left\{ \int f' \mathrm{d} \nu : \nu \in \mathcal{M}_T(X) \right\} .$$
		We can now carry out the construction from Case (i) on $f'$ instead of $f$ to get a sequence $(U_k)_{k = 1}^\infty$ of open sets along which $\left( \alpha_{U_k} \left( \operatorname{Avg}_{F_k} f' \right) \right)_{k = 1}^\infty$ diverges, and thus along which $\left( \alpha_{U_k} \left( \operatorname{Avg}_{F_k} f \right) \right)_{k = 1}^\infty$ diverges.
		
		Furthermore, if in addition, we assume that $(X, \mu)$ is atomless, then we can replace our $U_k$ with subsets $U_k'$ such that $\mu\left(U_k'\right) \searrow 0$. This can be done by recursively constructing a sequence of balls $U_k'$ contained in $U_k$ with sufficiently small radius that $\mu \left( U_{k + 1} ' \right) \leq \min \left\{ \mu \left( U_k ' \right) , 1/k \right\}$ for all $k \in \mathbb{N}$. This is possible by virtue of the atomlessness of $(X, \mu)$, since $0 = \mu(\{y_k\}) = \lim_{n \to \infty} \mu(B(y_k, 1/n))$. The above calculation will proceed the same way with the $U_k$ replaced by $U_k'$.
	\end{proof}
	
	\begin{Thm}\label{Weiss's extension}
		Let $T : G \curvearrowright X$ be a topological dynamical system, where $G$ is amenable. Let $\mu \in \mathcal{M}_T(X)$ be a $T$-invariant Borel probability on $X$, and let $(F_k)_{k = 1}^\infty$ be a \Folner \space sequence. Then the following conditions are related by the implications (1)$\Rightarrow $(2)$\Rightarrow$(3)$\Rightarrow$(4), and if $\mu$ is fully supported on $X$, then (4)$\Rightarrow$(1).
		\begin{enumerate}
			\item $T : G \curvearrowright X$ is uniquely ergodic.
			\item For every sequence $(C_k)_{k = 1}^\infty$ of Borel-measurable sets $C_k$ of positive measure, the sequence
			$$\left( \alpha_{C_k} \left( \operatorname{Avg}_{F_k} f \right) \right)_{k = 1}^\infty$$
			converges to $\int f \mathrm{d} \mu$ for all $f \in C(X)$.
			\item For every sequence $(U_k)_{k = 1}^\infty$ of open sets $U_k$ of positive measure, the sequence
			$$\left( \alpha_{U_k} \left( \operatorname{Avg}_{F_k} f \right) \right)_{k = 1}^\infty$$
			converges to $\int f \mathrm{d} \mu$ for all $f \in C(X)$.
			\item For every sequence $(U_k)_{k = 1}^\infty$ of open sets $U_k$ of positive measure, the sequence
			$$\left( \alpha_{U_k} \left( \operatorname{Avg}_{F_k} f \right) \right)_{k = 1}^\infty$$
			converges to some complex number for all $f \in C(X)$.
		\end{enumerate}
		
		Furthermore, if $(X, T)$ is not uniquely ergodic, the measure $\mu$ is fully supported, and the space $(X, \mu)$ is atomless, then we can choose a sequence $\left(U_k'\right)_{k = 1}^\infty$ of open subsets of $X$ with positive measure and a continuous $f \in C(X)$ such that $\left( \alpha_{U_k'} \left( \operatorname{Avg}_{F_k} f \right) \right)_{k = 1}^\infty$ diverges and $\mu\left( U_k' \right) \searrow 0$.
	\end{Thm}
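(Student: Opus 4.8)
The plan is to obtain Theorem \ref{Weiss's extension} as a ``functorial'' consequence of Theorem \ref{Herman extension of the convergence result}, using the elementary observation that $T : G \curvearrowright X$ is uniquely ergodic if and only if every $f \in C(X)$ is $T$-Herman. Indeed, if $\mathcal{M}_T(X) = \{\mu\}$ then for each $f$ the set $\left\{ \int f \, \mathrm{d}\nu : \nu \in \mathcal{M}_T(X) \right\}$ is the singleton $\left\{ \int f \, \mathrm{d}\mu \right\}$; conversely, if every continuous function is Herman, then any two $T$-invariant Borel probability measures agree against all of $C(X)$ and hence coincide, so $\mathcal{M}_T(X)$ is a singleton, which must then equal $\{\mu\}$ since $\mu \in \mathcal{M}_T(X)$. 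In particular, when (1) holds, $\mu$ is forced to be the unique invariant measure, so the limit ``$\int f \, \mathrm{d}\mu$'' appearing in conditions (2) and (3) is the correct one.

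For $(1) \Rightarrow (2)$, I would assume unique ergodicity, fix any $f \in C(X)$, note that $f$ is Herman by the observation above, and then invoke the implication $(1)\Rightarrow(2)$ of Theorem \ref{Herman extension of the convergence result} for $f$: for every sequence $(C_k)_{k=1}^\infty$ of Borel sets of positive measure, $\alpha_{C_k}\left( \operatorname{Avg}_{F_k} f \right) \to \int f \, \mathrm{d}\mu$. Since $f$ was arbitrary, this is exactly condition (2). The implications $(2)\Rightarrow(3)$ (open sets are Borel-measurable) and $(3)\Rightarrow(4)$ (a sequence converging to $\int f \, \mathrm{d}\mu$ in particular converges to some complex number) are immediate.

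For $\neg(1) \Rightarrow \neg(4)$ under the hypothesis that $\mu$ is fully supported: if $T$ is not uniquely ergodic, then by the observation there is some $f_0 \in C(X)$ that is \emph{not} Herman. Applying the $\neg(1)\Rightarrow\neg(4)$ direction of Theorem \ref{Herman extension of the convergence result} to $f_0$---whose hypotheses, namely $f_0$ not Herman and $\mu$ fully supported, are both met---produces a sequence $(U_k)_{k=1}^\infty$ of open sets of positive measure along which $\left( \alpha_{U_k}\left( \operatorname{Avg}_{F_k} f_0 \right) \right)_{k=1}^\infty$ diverges, i.e.\ does not converge to any complex number, witnessing the failure of (4). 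The concluding ``furthermore'' clause is inherited in the same way: granting additionally that $(X,\mu)$ is atomless, the corresponding ``furthermore'' clause of Theorem \ref{Herman extension of the convergence result}, applied to the non-Herman $f_0$, supplies open sets $U_k'$ of positive measure with $\mu(U_k') \searrow 0$ along which $\left( \alpha_{U_k'}\left( \operatorname{Avg}_{F_k} f_0 \right) \right)_{k=1}^\infty$ still diverges.

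There is essentially no serious obstacle once Theorem \ref{Herman extension of the convergence result} is in hand; the only thing requiring care is the quantifier bookkeeping---in $(1)\Rightarrow(2)$ one applies the single-function statement uniformly over all $f \in C(X)$, whereas in the contrapositive one applies it to a single, well-chosen non-Herman $f_0$---and the verification noted above that the fixed measure $\mu$ is forced to be the unique invariant measure under (1).
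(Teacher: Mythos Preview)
Your proposal is correct and follows essentially the same route as the paper: reduce everything to Theorem \ref{Herman extension of the convergence result} via the equivalence ``uniquely ergodic $\iff$ every $f\in C(X)$ is Herman.'' The paper cites Theorem \ref{(1) implies (2) implies (3)} for $(1)\Rightarrow(2)\Rightarrow(3)$ and produces the non-Herman witness via the gauge characterization (Theorem \ref{Unique ergodicity and gauge}) before invoking Theorem \ref{Herman extension of the convergence result}, whereas you invoke Theorem \ref{Herman extension of the convergence result} directly throughout and obtain the non-Herman $f_0$ by the elementary Riesz-representation argument; your route is slightly more self-contained since it avoids the appeal to Jenkinson's theorem hidden in Theorem \ref{Unique ergodicity and gauge}, but the logical skeleton is identical.
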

	
	\begin{proof}
		That (1)$\Rightarrow$(2)$\Rightarrow$(3) follows immediately from Theorem \ref{(1) implies (2) implies (3)}, and (3)$\Rightarrow$(4) is trivial. Now we'll show that if $\mu$ is fully supported, then $\neg$(1)$\Rightarrow \neg$(4). Suppose that $T : G \curvearrowright X$ is \emph{not} uniquely ergodic. By Theorem \ref{Gauge converges}, there exists a nonnegative-valued $f \in C_\mathbb{R}(X)$ such that $\int f \mathrm{d} \mu < \Gamma(f) = \lim_{k \to \infty} \left\| \operatorname{Avg}_{F_k} f \right\|$, i.e. for which $\left\{ \int f \mathrm{d} \nu : \nu \in \mathcal{M}_T(X) \right\}$ is non-singleton. Appeal to Theorem \ref{Herman extension of the convergence result}.
	\end{proof}
	
	\begin{Rmk}
	Our Theorem \ref{Herman extension of the convergence result} generalizes Theorem 1.10 of \cite{Assani-Young}. We thank Benjamin Weiss for pointing out that connectedness was not necessary for that result.
	\end{Rmk}
	
	\section{Special cases of temporo-spatial differentiation problems}\label{Large averaging sets and static averaging sets}
	
	We digress here quickly to consider certain special classes of temporo-spatial differentiation problems: where the sequence of spatial averaging sets are constant, and where the spatial averaging sets have measure going to $1$.
	
	\begin{Prop}\label{Constant spatial averaging sequence}
		Let $T : G \curvearrowright (X, \mu)$ be a measure-preserving action of a discrete group $G$ on a probability space $(X, \mu)$. Let $f \in L^1(X, \mu)$, and let $(F_k)_{k = 1}^\infty$ be a sequence of nonempty finite subsets of $G$ such that the sequence $\left( \operatorname{Avg}_{F_k} f \right)$ converges to a function $f^* \in L^1(X, \mu)$ in the weak topology on $L^1(X, \mu)$. Then for every measurable subset $C$ of $X$ with positive measure, we have
		$$\alpha_C \left( \operatorname{Avg}_{F_k} f \right) \stackrel{k \to \infty}{\to} \alpha_C \left( f^* \right) .$$
	\end{Prop}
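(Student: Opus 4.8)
The plan is to observe that, up to the positive scalar $1/\mu(C)$, the functional $\alpha_C$ is nothing but integration against the indicator function $\mathbb{1}_C$, and that $\mathbb{1}_C$ is an element of $L^\infty(X, \mu) = L^1(X, \mu)^*$. Consequently $\alpha_C$ is continuous for the weak topology on $L^1(X, \mu)$, and the claim falls out immediately from the hypothesis. So the proof is essentially a one-line application of the duality $L^1(X,\mu)^* = L^\infty(X,\mu)$.

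In detail, I would first record the (trivial) point that each average $\operatorname{Avg}_{F_k} f = \frac{1}{|F_k|} \sum_{g \in F_k} f \circ T_g$ really is an element of $L^1(X, \mu)$: it is a finite average of functions $f \circ T_g$, each of which lies in $L^1(X, \mu)$ with $\|f \circ T_g\|_1 = \|f\|_1$ since $T_g$ is measure-preserving. Next, since $\mu$ is a probability measure and $C$ is measurable with $\mu(C) > 0$, we have $\mathbb{1}_C \in L^\infty(X, \mu)$, so $g \mapsto \int_X g\, \mathbb{1}_C \, \mathrm{d}\mu = \int_C g\, \mathrm{d}\mu$ is a bounded linear functional on $L^1(X, \mu)$.

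By the very definition of weak convergence in $L^1(X,\mu)$, the hypothesis $\operatorname{Avg}_{F_k} f \to f^*$ weakly says precisely that $\int_X \left( \operatorname{Avg}_{F_k} f \right) \phi \, \mathrm{d}\mu \to \int_X f^* \phi \, \mathrm{d}\mu$ for every $\phi \in L^\infty(X, \mu)$. Applying this with $\phi = \mathbb{1}_C$ and dividing through by $\mu(C) > 0$ yields
$$\alpha_C \left( \operatorname{Avg}_{F_k} f \right) = \frac{1}{\mu(C)} \int_C \operatorname{Avg}_{F_k} f \, \mathrm{d}\mu \xrightarrow{k \to \infty} \frac{1}{\mu(C)} \int_C f^* \, \mathrm{d}\mu = \alpha_C(f^*),$$
which is the desired conclusion.

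There is essentially no obstacle here; the result is a formal consequence of weak $L^1$-convergence together with $\mathbb{1}_C \in L^\infty$, and in particular neither amenability of $G$ nor any \Folner\ property of $(F_k)$ is used. The only points requiring any care are the bookkeeping ones noted above: that the $\operatorname{Avg}_{F_k} f$ genuinely lie in $L^1(X,\mu)$ so that the weak-topology hypothesis applies verbatim, and that $\mathbb{1}_C$ genuinely lies in $L^\infty(X,\mu)$, which is where $\mu(C) > 0$ (and finiteness of $\mu$) enters.
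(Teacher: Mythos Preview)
Your proposal is correct and follows essentially the same approach as the paper: both recognize that $\alpha_C$ is (up to the scalar $1/\mu(C)$) the dual pairing against $\chi_C \in L^\infty(X,\mu) = L^1(X,\mu)^*$, and then invoke the definition of weak convergence directly. Your write-up is slightly more verbose in recording that $\operatorname{Avg}_{F_k} f \in L^1$, but the argument is the same.
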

	
	\begin{proof}
		We know $\mu(C)^{-1} \chi_C \in L^\infty(X, \mu) = \left( L^1(X, \mu) \right) '$, so
		\begin{align*}
			\alpha_C \left( \operatorname{Avg}_{F_k} f \right)	& = \int \mu(C)^{-1} \chi_C \operatorname{Avg}_{F_k} f \mathrm{d} \mu	\\
			& = \left< \operatorname{Avg}_{F_k} f , \mu(C)^{-1} \chi_C \right> \\
			\left[ \operatorname{Avg}_{F_k} f \stackrel{k \to \infty}{\to} f^* \textrm{ in the weak topology on } L^1(X, \mu) \right]	& \stackrel{k \to \infty}{\to} \left< f^* , \mu(C)^{-1} \chi_C \right> \\
			& = \alpha_C \left( f^* \right) .
		\end{align*}
	\end{proof}
	
	We note that Proposition \ref{Constant spatial averaging sequence} is exceptional among all our temporo-spatial convergence results to date, in that it can be applied to a function $f$ which is not $L^\infty$, but merely $L^1$. It also brings us to the following corollary.
	
	\begin{Cor}
		Let $T : G \curvearrowright (X, \mu)$ be a measure-preserving action of a discrete amenable group $G$ on a probability space $(X, \mu)$. Let $f \in L^1(X, \mu)$, and let $(F_k)_{k = 1}^\infty$ be a \Folner \space sequence for $G$. Then for every measurable subset $C$ of $X$ of positive measure, we have
		$$\alpha_C \left( \operatorname{Avg}_{F_k} f \right) \stackrel{k \to \infty}{\to} \alpha_C \left( f^* \right) ,$$
		where $f^*$ is the projection of $f$ onto the subspace of invariant functions in $L^1 (X, \mu)$.
	\end{Cor}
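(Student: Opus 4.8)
The plan is to reduce this to Proposition \ref{Constant spatial averaging sequence}: once we know that $\operatorname{Avg}_{F_k} f$ converges to $f^*$ in the weak topology on $L^1(X,\mu)$, the conclusion $\alpha_C\left(\operatorname{Avg}_{F_k} f\right) \to \alpha_C(f^*)$ for every measurable $C$ with $\mu(C)>0$ is immediate from that proposition. Here $f^*$ should be understood as the conditional expectation $\mathbb{E}(f\mid\mathcal{I})$, where $\mathcal{I}$ is the $\sigma$-algebra of $T$-invariant measurable sets; the space of $T$-invariant functions in $L^1(X,\mu)$ is exactly $L^1(X,\mathcal{I},\mu)$, and $\mathbb{E}(\,\cdot\mid\mathcal{I})$ is the canonical contractive projection onto it. Thus it suffices to prove the (formally stronger) statement that $\operatorname{Avg}_{F_k} f \to f^*$ in $L^1$-norm.

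First I would record two contraction facts: since each $T_g$ is measure-preserving, $h\mapsto h\circ T_g$ is an isometry of $L^1(X,\mu)$, so each $\operatorname{Avg}_{F_k}$ is an $L^1$-contraction; and $h\mapsto \mathbb{E}(h\mid\mathcal{I})$ is an $L^1$-contraction, so $f\mapsto f^*$ is $L^1$-contractive. Next, for $h\in L^2(X,\mu)$ the Koopman operators $U_g h = h\circ T_g$ are unitary, $\operatorname{Avg}_{F_k} = \frac{1}{|F_k|}\sum_{g\in F_k} U_g$, and the mean ergodic theorem along the \Folner sequence $(F_k)$ gives $\operatorname{Avg}_{F_k} h \to Ph$ in $L^2$-norm, where $P$ is the orthogonal projection of $L^2(X,\mu)$ onto its subspace of invariant vectors. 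Since $P$ restricted to $L^2(X,\mu)$ coincides with $\mathbb{E}(\,\cdot\mid\mathcal{I})$, and since $\|\cdot\|_1\le\|\cdot\|_2$ on a probability space, we conclude that $\operatorname{Avg}_{F_k} h \to h^*$ in $L^1$-norm for every $h\in L^2(X,\mu)$.

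Finally I would run the standard density argument. Given $f\in L^1(X,\mu)$ and $\epsilon>0$, choose $h\in L^2(X,\mu)$ with $\|f-h\|_1<\epsilon$. Then, using the two contraction facts,
\begin{align*}
\left\|\operatorname{Avg}_{F_k} f - f^*\right\|_1
&\le \left\|\operatorname{Avg}_{F_k}(f-h)\right\|_1 + \left\|\operatorname{Avg}_{F_k} h - h^*\right\|_1 + \left\|h^* - f^*\right\|_1 \\
&\le \|f-h\|_1 + \left\|\operatorname{Avg}_{F_k} h - h^*\right\|_1 + \|h-f\|_1 .
\end{align*}
Letting $k\to\infty$, the middle term vanishes by the previous paragraph, so $\limsup_k \left\|\operatorname{Avg}_{F_k} f - f^*\right\|_1 \le 2\epsilon$; as $\epsilon$ was arbitrary, $\operatorname{Avg}_{F_k} f\to f^*$ in $L^1$, hence weakly, and Proposition \ref{Constant spatial averaging sequence} finishes the proof. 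The only non-elementary ingredient is the mean ergodic theorem for the unitary representation $g\mapsto U_g$ of the amenable group $G$ along $(F_k)$, which I would cite rather than reprove; it follows in turn from the orthogonal splitting of $L^2(X,\mu)$ into invariant vectors and the closed span of the ``coboundaries'' $h - U_g h$, on which the \Folner property forces the averages $\operatorname{Avg}_{F_k}$ to converge to $0$. This is the one step where care is needed; the contraction estimates and the density passage are routine.
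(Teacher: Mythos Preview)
Your proof is correct and follows essentially the same approach as the paper: both reduce to Proposition \ref{Constant spatial averaging sequence} by invoking the $L^1$ Mean Ergodic Theorem for amenable group actions along a \Folner\ sequence and then noting that norm convergence implies weak convergence. The only difference is that the paper simply cites the Mean Ergodic Theorem (from Kerr--Li), whereas you sketch its proof via the $L^2$ case and a density argument.
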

	
	\begin{proof}
		This is a corollary of Proposition \ref{Constant spatial averaging sequence} and the Mean Ergodic Theorem for actions of amenable groups \cite[Theorem 4.23]{KerrLi}, since the norm topology on $L^1$ is stronger than the weak topology.
	\end{proof}
	
	\begin{Prop}\label{Measure to 1}
		Let $T : G \curvearrowright (X, \mu)$ be a measure-preserving action of a discrete group $G$ on a probability space $(X, \mu)$, and let $(C_k)_{k = 1}^\infty$ be a sequence of measurable subsets of $X$ such that $\mu(C_k) \to 1$. Let $(F_k)_{k = 1}^\infty$ be a sequence of nonempty finite subsets of $G$. Then
		$$\lim_{k \to \infty} \alpha_{C_k} \left( \operatorname{Avg}_{F_k} f \right) = \int f \mathrm{d} \mu$$
		for all $f \in L^\infty (X, \mu)$.
	\end{Prop}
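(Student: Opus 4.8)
The plan is to exploit two elementary features of the temporal averages $h_k := \operatorname{Avg}_{F_k} f = \frac{1}{|F_k|} \sum_{g \in F_k} f \circ T_g$. First, since $T$ is measure-preserving, each $T_g$ preserves $\mu$, so $\int h_k \, \mathrm{d}\mu = \int f \, \mathrm{d}\mu$ for every $k$. Second, $\|h_k\|_\infty \leq \|f\|_\infty$ for every $k$, since $h_k$ is an average of functions each of sup-norm at most $\|f\|_\infty$. Note also that $\mu(C_k) \to 1$ forces $\mu(C_k) > 0$ for all large $k$, so $\alpha_{C_k}(h_k)$ is eventually defined.

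Given these two facts, I would set $I := \int f \, \mathrm{d}\mu$ and $M := \|f\|_\infty$, and decompose
\begin{align*}
\alpha_{C_k}(h_k) - I &= \frac{1}{\mu(C_k)} \int_{C_k} h_k \, \mathrm{d}\mu - \int_X h_k \, \mathrm{d}\mu \\
&= \left( \frac{1}{\mu(C_k)} - 1 \right) \int_{C_k} h_k \, \mathrm{d}\mu - \int_{X \setminus C_k} h_k \, \mathrm{d}\mu .
\end{align*}
The first term is bounded in absolute value by $\left( \frac{1}{\mu(C_k)} - 1 \right) M \mu(C_k) = M\bigl(1 - \mu(C_k)\bigr)$, and the second by $M \mu(X \setminus C_k) = M\bigl(1 - \mu(C_k)\bigr)$. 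Hence $\bigl| \alpha_{C_k}(h_k) - I \bigr| \leq 2M\bigl(1 - \mu(C_k)\bigr) \to 0$, which is the claim.

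There is essentially no obstacle here; the argument is a direct estimate. The only points requiring any care are that the temporal average must be interpreted as the finite sum $\frac{1}{|F_k|} \sum_{g \in F_k} f \circ T_g$ — which is exactly the convention adopted for discrete $G$ and $f \in L^\infty$ in the Notation above — and that measure-preservation, rather than the weaker hypothesis that each $T_g$ is a bijection, is what supplies the identity $\int h_k \, \mathrm{d}\mu = \int f \, \mathrm{d}\mu$. In particular, no amenability hypothesis or \Folner \space property of $(F_k)_{k=1}^\infty$ is needed.
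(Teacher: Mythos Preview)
Your proof is correct and follows essentially the same approach as the paper: both use that $\int \operatorname{Avg}_{F_k} f \,\mathrm{d}\mu = \int f \,\mathrm{d}\mu$ and $\|\operatorname{Avg}_{F_k} f\|_\infty \leq \|f\|_\infty$, then split $\alpha_{C_k}(\operatorname{Avg}_{F_k} f) - \int f \,\mathrm{d}\mu$ into the same two pieces (the integral over $X \setminus C_k$ and the term coming from the factor $\mu(C_k)^{-1} - 1$), arriving at the identical bound $2\|f\|_\infty (1 - \mu(C_k))$. Your presentation is a bit tidier in that you isolate the two key facts about $h_k$ at the outset, but the arguments are the same.
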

	
	\begin{proof}
		Fix $f \in L^\infty(X, \mu)$. Then
		\begin{align*}
			& \left| \int f \mathrm{d} \mu - \alpha_{C_k} \left( \operatorname{Avg}_{F_k} f \right) \right|	\\
			=	& \left| \int_X \operatorname{Avg}_{F_k} \mathrm{d} \mu - \int_{C_k} \operatorname{Avg}_{F_k} f \mathrm{d} \mu + \int_{C_k} \operatorname{Avg}_{F_k} f \mathrm{d} \mu - \mu(C_k)^{-1} \int_{C_k} \operatorname{Avg}_{F_k} f \mathrm{d} \mu \right| \\
			\leq	& \left| \int_X \operatorname{Avg}_{F_k} f \mathrm{d} \mu - \int_{C_k} \operatorname{Avg}_{F_k} f \mathrm{d} \mu \right| + \left| \int_{C_k} \operatorname{Avg}_{F_k} f \mathrm{d} \mu - \mu(C_k)^{-1} \int_{C_k} \operatorname{Avg}_{F_k} f \mathrm{d} \mu \right| \\
			=	& \left| \int_{X \setminus C_k} \operatorname{Avg}_{F_k} f \mathrm{d} \mu \right| + \left( 1 - \mu(C_k)^{-1} \right) \left| \int_{C_k} \operatorname{Avg}_{F_k} f \mathrm{d} \mu \right| \\
			\leq	& \left( 1 - \mu(C_k) \right) \|f\|_\infty + \left( 1 - \mu(C_k)^{-1} \right) \mu(C_k) \|f\|_\infty \\
			\stackrel{k \to \infty}{\to}	& \infty .
		\end{align*}
	\end{proof}
	
	In light of Proposition \ref{Measure to 1}, we can see that temporo-spatial differentiation problems become trivial in the case where $\mu(C_k) \to 1$ for $f \in L^\infty(X, \mu)$. The result, however, fails for any unbounded integrable function. Let $f \in L^1(X, \mu) \setminus L^\infty(X, \mu)$, i.e. an unbounded integrable function, and let $E_k = \left\{ x \in X : |f(x)| \geq k \right\}$ for all $k \in \mathbb{N}$. Then by Chebyshev's inequality, it follows that $0 < \mu(E_k) \leq k^{-1} \|f\|_1$ for all $k \in \mathbb{N}$, and $\mu(E_k) \searrow 0$. Then if $T : G \curvearrowright (X, \mu)$ is the trivial action, i.e. $T_g = \operatorname{id}_X$ for all $g \in G$, then 
	\begin{align*}
		\alpha_{E_k} \left( \operatorname{Avg}_{F_k} |f| \right)	& = \alpha_{E_k} (|f|) \\
		& \geq k \\
		\Rightarrow \alpha_{E_k} \left( \operatorname{Avg}_{F_k} |f| \right)	& \stackrel{k \to \infty}{\to} + \infty .
	\end{align*}
	Based on this example, we can see that in contrast with Proposition \ref{Constant spatial averaging sequence}, there's no hope for improving Proposition \ref{Measure to 1} to even the case where $f \in L^{\infty-}(X, \mu) = \bigcap_{p \in [1, \infty)} L^p(X, \mu)$.
	
	\section{Temporo-spatial differentiation theorems around sets of rapidly vanishing diameter}\label{Temporo-spatial differentiation theorems around sets of rapidly vanishing diameter}
	
	In this section, we'll be concerned with the following general setup and question: Let $(X, p)$ be a compact pseudometric space, and let $T : G \curvearrowright X$ be a continuous action of a locally compact group $G$ on $X$ which preserves a Borel probability measure $\mu$ on $X$. Now fix some point $x_0 \in X$, and consider a net of positive-measure subsets $C_i$ of $X$ containing $x_0$. When will the temporo-spatial derivative relative to $C_i$ (and some averaging net $F_i$) resemble the pointwise temporal average at $x_0$? Theorem \ref{Non-Holder pointwise and temporo-spatial} establishes a powerful sufficient condition: If $f : X \to \mathbb{C}$ is uniformly continuous and bounded, and the diameter of the elements of the net $C_i$ go to $0$ sufficiently fast, then we'll have that $\left( \operatorname{Avg}_{F_i} f \right) (x_0) \approx \alpha_{C_i} \left( \operatorname{Avg}_{F_i} f \right)$, where "sufficiently fast" depends upon the (pseudo)metric properties of the continuous action, the averaging net $(F_i)_{i \in \mathscr{I}}$, and the point $x_0$. In this situation, we can reduce the temporo-spatial problem to a problem of taking a pointwise ergodic average. We then consider cases where narrowing our focus (e.g. considering H\"older actions instead of general continuous actions) allow us to improve the diameter decay rate. We then move on to make statements about the "probabilistically generic" behavior of these temporo-spatial derivatives by appealing to pointwise convergence results from ergodic theory. Finally, we extend this pointwise reduction to the setting of nonconventional ergodic averages with Theorem \ref{Pointwise reduction for multiple ergodic averages}.
	
	Several results in this section will be quite general in their statement, and as such will sometimes require a number of additional hypotheses that are satisfied automatically in many reasonable situations. We make notes after the proofs of some results to note that certain hypotheses stated explicitly in the results in question are satisfied a priori in certain reasonable cases.
	
	Our first result of this section describes a sufficient condition for the temporo-spatial averages to reduce to pointwise averages.
	
	\begin{Lem}\label{Reduction lemma}
		Let $(X, p)$ be a compact pseudometric space, and let $T : G \curvearrowright X$ be a continuous action of a locally compact topological group $G$ (not necessarily amenable) on $X$. Let $\mu$ be a regular Borel probability measure on $X$. Fix a point $x_0 \in X$.
		
		Let $(F_i)_{i \in \mathscr{I}}$ be a net of compact subsets of $G$ with positive Haar measure $m$. Let $(C_i)_{i \in \mathscr{I}}$ be a net of measurable subsets of $X$ such that $\mu(C_i) > 0$ and $x_0 \in C_i$ for all $i \in \mathscr{I}$. Suppose that for every $\delta > 0$, there exists a net $(A_i)_{i \in \mathscr{I}}$ of measurable subsets of $G$ such that
		\begin{align*}
			A_i	& \subseteq \left\{ g \in F_i : \operatorname{diam}(C_i) \leq \delta \right\} ,	& (\forall i \in \mathscr{I}) \\
			\lim_i	\frac{m(A_i)}{m(F_i)}	& = 1 .
		\end{align*}
		Let $f : X \to \mathbb{C}$ be a continuous function. Then
		$$
		\lim_i \left| \left( \operatorname{Avg}_{F_i} f \right) (x_0) - \alpha_{C_i} \left( \operatorname{Avg}_{F_i} f \right) \right| = 0 .
		$$
	\end{Lem}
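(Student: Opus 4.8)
The plan is to unwind both averages in the displayed difference, use Fubini's theorem to express them over a common domain $F_i$, and then estimate the resulting double average by splitting the temporal average over $F_i$ into the large ``good'' part $A_i$, on which uniform continuity applies, and the small remainder, on which the crude bound $2\|f\|_{C(X)}$ suffices.

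First I would note that, $X$ being compact and $f$ continuous, $f$ is bounded and uniformly continuous for the pseudometric $p$ (the usual compactness argument works verbatim for pseudometrics, or one passes to the metric quotient). Then, given $\epsilon > 0$, I would choose $\delta > 0$ so that $p(y, z) \leq \delta$ implies $|f(y) - f(z)| \leq \epsilon$, and invoke the hypothesis with this $\delta$ to obtain a net $(A_i)_{i \in \mathscr{I}}$ of measurable subsets of $G$ with $A_i \subseteq \{ g \in F_i : \operatorname{diam}(T_g(C_i)) \leq \delta \}$ and $m(A_i)/m(F_i) \to 1$.

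Next, since the map $(g, x) \mapsto f(T_g x)$ is jointly continuous, hence measurable, and bounded on the finite-measure space $F_i \times C_i$, Fubini's theorem applies to $\alpha_{C_i}(\operatorname{Avg}_{F_i} f)$; combining that with the fact that $f(T_g x_0)$ is constant in $x$ gives
\[
\left( \operatorname{Avg}_{F_i} f \right)(x_0) - \alpha_{C_i}\left( \operatorname{Avg}_{F_i} f \right) = \frac{1}{m(F_i)} \int_{F_i} \frac{1}{\mu(C_i)} \int_{C_i} \left( f(T_g x_0) - f(T_g x) \right) \mathrm{d}\mu(x)\, \mathrm{d} m(g) ,
\]
and hence, pulling the absolute value inside both integrals,
\[
\left| \left( \operatorname{Avg}_{F_i} f \right)(x_0) - \alpha_{C_i}\left( \operatorname{Avg}_{F_i} f \right) \right| \leq \frac{1}{m(F_i)} \int_{F_i} \frac{1}{\mu(C_i)} \int_{C_i} \left| f(T_g x_0) - f(T_g x) \right| \mathrm{d}\mu(x)\, \mathrm{d} m(g) .
\]

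Finally I would split the outer integral over $A_i$ and $F_i \setminus A_i$. For $g \in A_i$ and $x \in C_i$, both $T_g x_0$ and $T_g x$ lie in $T_g(C_i)$ — here one uses $x_0 \in C_i$ — whose diameter is at most $\delta$, so $|f(T_g x_0) - f(T_g x)| \leq \epsilon$, and the $A_i$-part of the integral contributes at most $\epsilon$; on $F_i \setminus A_i$ the crude bound $|f(T_g x_0) - f(T_g x)| \leq 2\|f\|_{C(X)}$ makes that part contribute at most $\left( 1 - m(A_i)/m(F_i) \right) 2\|f\|_{C(X)}$. Thus
\[
\left| \left( \operatorname{Avg}_{F_i} f \right)(x_0) - \alpha_{C_i}\left( \operatorname{Avg}_{F_i} f \right) \right| \leq \epsilon + \left( 1 - \frac{m(A_i)}{m(F_i)} \right) 2 \|f\|_{C(X)} ,
\]
and taking $\limsup_i$ (using $m(A_i)/m(F_i) \to 1$) yields $\limsup_i \left| \cdots \right| \leq \epsilon$; since $\epsilon > 0$ was arbitrary, the limit is $0$. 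I do not expect a serious obstacle here — the argument is largely bookkeeping — but two points deserve attention: the Fubini swap must be justified (joint continuity of the action map plus finiteness of the measures involved), and the good-set net $(A_i)$ is selected only after $\epsilon$, so one establishes the $\limsup$ bound for each fixed $\epsilon$ and lets $\epsilon \to 0$ afterward; the modulus of continuity $\delta$ itself never enters the final estimate.
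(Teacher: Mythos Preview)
Your proof is correct and follows essentially the same strategy as the paper's own argument: pick $\delta$ via uniform continuity of $f$, split the temporal integral over $F_i$ into the good set $A_i$ (where $\operatorname{diam}(T_g C_i)\le\delta$ forces $|f(T_gx_0)-f(T_gx)|\le\epsilon$) and its complement (where the crude $2\|f\|$ bound and $m(F_i\setminus A_i)/m(F_i)\to 0$ suffice). If anything, your write-up is a bit tidier than the paper's, which carries an unexplained constant $\lambda$ and refers to $D_{x_0}$ before it is introduced; your explicit Fubini justification and the $\limsup$-then-$\epsilon\to 0$ framing are welcome clarifications of the same idea.
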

	
	\begin{proof}
		Fix $\epsilon > 0$. Since $f$ is uniformly continuous (see \cite[Lemma 3.1]{Assani-Young}), there exists $\delta > 0$ such that if $y_1, y_2 \in X$, and $p(y_1, y_2) \leq \delta$, then $|f(y_1) - f(y_2)| \leq \frac{\epsilon}{2 \lambda}$. Let $(A_i)_{i \in \mathscr{I}}$ be as in the lemma statement, and set $B_i = F_i \setminus A_i$ for all $i \in \mathscr{I}$, so $\lim_i m(B_i) / m(F_i) = 0$.
		
		Now, we estimate
		\begin{align*}
			& \left| \left( \operatorname{Avg}_{F_i} f \right) (x_0) - \alpha_{C_i}(\operatorname{Avg}_{F_i} f) \right| \\
			=	& \left| \alpha_{C_i} \left( \left( \operatorname{Avg}_{F_i} f \right) (x_0) - \operatorname{Avg}_{F_i} f \right) \right| \\
			=	& \left| \alpha_{C_i} \left( \frac{1}{m(F_i)} \int_{F_i} (f(T_g x_0) - (f \circ T_g)) \right) \mathrm{d} m(g) \right| \\
			\leq	& \left| \alpha_{C_i} \left( \frac{1}{m(F_i)} \int_{A_i} (f(T_g x_0) - (f \circ T_g)) \mathrm{d} m(g) \right) \right| \\
			+	& \left| \alpha_{C_i} \left( \frac{1}{m(F_i)} \int_{B_i} (f(T_g x_0) - (f \circ T_g)) \mathrm{d} m(g) \right) \right|
		\end{align*}
		Our goal now is to bound both
		\begin{align*}
			& \left| \alpha_{C_i} \left( \frac{1}{m(F_i)} \int_{A_i} (f(T_g x_0) - (f \circ T_g)) \mathrm{d} m(g) \right) \right| , \\
			& \left| \alpha_{C_i} \left( \frac{1}{m(F_i)} \int_{B_i} (f(T_g x_0) - (f \circ T_g)) \mathrm{d} m(g) \right) \right|
		\end{align*}
		by $\frac{\epsilon}{2}$.
		
		First, we estimate the term $\left| \alpha_{C_i} \left( \frac{1}{m(F_i)} \int_{A_i} (f(T_g x_0) - (f \circ T_g)) \mathrm{d} m(g) \right) \right|$. We see that if $g \in A_i$, then
		\begin{align*}
			& \left| \alpha_{C_i} \left( \frac{1}{m(F_i)} \int_{A_i} (f(T_g x_0) - (f \circ T_g)) \mathrm{d} m(g) \right) \right| \\
			=	& \left| \frac{1}{\mu(C_i)} \int_{C_i} \frac{1}{m(F_i)} \int_{A_i} (f(T_g x_0) - f(T_g x)) \mathrm{d} m(g) \mathrm{d} \mu(x) \right| \\
			\leq	& \frac{1}{\mu(C_i)} \int_{C_i} \frac{1}{m(F_i)} \int_{A_i} \left| f(T_g x_0) - f(T_g x) \right| \mathrm{d} m(g) \mathrm{d} \mu(x)
		\end{align*}
		But if $y \in T_g C_i$, and $D_{x_0}(g, \operatorname{diam}(C_i)) \leq \delta$, then $x_0 , T_{g^{-1}} y \in C_i$, meaning that
		$$\rho(T_g x_0 , y) = \rho (T_g x_0 , T_g (T_{g^{-1}} y)) \leq D_{x_0} (g, T_{g^{-1}} y) \leq D_{x_0}(g, \operatorname{diam}(C_i)) \leq \delta .$$
		Therefore
		\begin{align*}
			\frac{1}{\mu(C_i)} \int_{C_i} \frac{1}{m(F_i)} \int_{A_i} \left| f(T_g x_0) - f(T_g x) \right| \mathrm{d} m(g) \mathrm{d} \mu(x)	& \leq \frac{1}{\mu(C_i)} \int_{C_i} \frac{1}{m(F_i)} \int_{A_i} \frac{\epsilon}{2} \mathrm{d} m(g) \mathrm{d} \mu(x) \\
			& = \frac{\epsilon}{2} .
		\end{align*}
		
		Now, we bound the term $\left| \alpha_{C_i} \left( \frac{1}{m(F_i)} \int_{B_i} (f(T_g x_0) - (f \circ T_g)) \mathrm{d} m(g) \right) \right|$. By estimates similar to those used to approximate the former term, we have that
		\begin{align*}
			& \left| \frac{1}{\mu(C_i)} \int_{C_i} \frac{1}{m(F_i)} \int_{B_i} (f(T_g x_0) - f(T_g x)) \mathrm{d} m(g) \mathrm{d} \mu(x) \right| \\
			\leq	& \frac{1}{\mu(C_i)} \int_{C_i} \frac{1}{m(F_i)} \int_{B_i} \left| f(T_g x_0) - f(T_g x) \right| \mathrm{d} m(g) \mathrm{d} \mu(x) \\
			\leq	& \frac{1}{\mu(C_i)} \int_{C_i} \frac{1}{m(F_i)} \int_{B_i} \left( 2 \|f\|_u \right) \mathrm{d} m(g) \mathrm{d} \mu(x)
		\end{align*}
		Choose $I \in \mathscr{I}$ such that if $i \geq I$, then $\frac{m(B_i)}{m(F_i)} \leq \frac{\epsilon}{4 \max \{1, \|f\|_u\}}$. Then if $i \geq I$, then $\frac{m(B_i)}{m(F_i)} (2 \|f\|_u) \leq \frac{\epsilon}{2}$.
		
		Therefore, if $i \geq I$, then $\left| \left( \operatorname{Avg}_{F_i} f \right) (x_0) - \alpha_{C_i}(\operatorname{Avg}_{F_i} f) \right| \leq \frac{\epsilon}{2} + \frac{\epsilon}{2} = \epsilon$.
	\end{proof}
	
	We have stated Lemma \ref{Reduction lemma} for \emph{pseudo}metric spaces, rather than just metric spaces. In \cite{Assani-Young}, we found that looking at certain pseudometric spaces helped us to establish convergence results for certain temporo-spatial averages. For example, a result like our Theorem \ref{Non-Holder pointwise and temporo-spatial} (\cite[Proposition 3.2]{Assani-Young}) was useful in proving \cite[Theorem 3.5]{Assani-Young}. For this reason, we state several results of this section in terms of compact pseudometric spaces.
	
	We also observe that Lemma \ref{Reduction lemma} does not assume that the action $T: G \curvearrowright (X, \mu)$ is measure-preserving, only continuous.
	
	Lemma \ref{Reduction lemma} as stated is a powerful tool for achieving the kind of reduction to the pointwise setting that we aim for, but we desire still a sufficient condition for the hypotheses of the lemma to attain. The following result states that, under appropriate conditions, we can find an $x_0$-dependent diameter decay condition on $(C_i)_{i \in \mathscr{I}}$ for this reduction to attain.
	
	\begin{Thm}\label{Non-Holder pointwise and temporo-spatial}
		Let $(X, p)$ be a compact pseudometric space, and let $T : G \curvearrowright X$ be a continuous action of a locally compact topological group $G$ (not necessarily amenable) on $X$. Let $\mu$ be a regular Borel probability measure on $X$. Fix a point $x_0 \in X$, and for each $g \in G, r \in (0, \infty)$, let $D_{x_0}(g, r)$ be the value
		$$D_{x_0}(g, r) = \sup \left\{ p(T_g x_0, T_g x) : x \in X , p(x_0, x) \leq r \right\} ,$$
		and assume that $D_{x_0} (\cdot, r) : G \to (0, \infty)$ is measurable for each $r \in (0, \infty)$.
		
		Let $(F_i)_{i \in \mathscr{I}}$ be a net of compact subsets of $G$ with positive Haar measure $m$. Let $(C_i)_{i \in \mathscr{I}}$ be a net of measurable subsets of $X$ such that $\mu(C_i) > 0$ and $x_0 \in C_i$ for all $i \in \mathscr{I}$. Suppose that for every $\delta > 0$, we have
		$$
		\lim_i \frac{m \left( \left\{ g \in F_i : D_{x_0}(g, \operatorname{diam}(C_i)) > \delta \right\} \right)}{m(F_i)} = 0 .
		$$
		Let $f : X \to \mathbb{C}$ be a continuous function. Then
		$$
		\lim_i \left| \left( \operatorname{Avg}_{F_i} f \right) (x_0) - \alpha_{C_i} \left( \operatorname{Avg}_{F_i} f \right) \right| = 0 .
		$$
	\end{Thm}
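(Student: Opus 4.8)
The plan is to deduce Theorem~\ref{Non-Holder pointwise and temporo-spatial} directly from Lemma~\ref{Reduction lemma}: the content of the theorem is simply that the diameter-decay hypothesis stated here automatically produces a net $(A_i)_{i \in \mathscr{I}}$ of the kind demanded by that lemma. So the whole argument reduces to exhibiting, for each $\delta > 0$, such a net. First I would fix $\delta > 0$ and set
$$A_i := \left\{ g \in F_i : D_{x_0}(g, \operatorname{diam}(C_i)) \leq \delta \right\} \qquad (i \in \mathscr{I}),$$
which is precisely the complement in $F_i$ of the set appearing in the theorem's hypothesis. This is exactly the containment used in the proof of Lemma~\ref{Reduction lemma}, namely $A_i \subseteq \{g \in F_i : D_{x_0}(g, \operatorname{diam}(C_i)) \leq \delta\}$, so the first requirement of the lemma holds by fiat.

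Next I would check measurability: for each fixed $i$, $\operatorname{diam}(C_i)$ is a constant $r_i \in (0, \infty)$, and by assumption $D_{x_0}(\cdot, r_i) : G \to (0,\infty)$ is measurable, so $A_i$ is the intersection of the Borel set $F_i$ with a measurable set and is therefore measurable. Then I would verify the density condition $\lim_i m(A_i)/m(F_i) = 1$, which is immediate from the standing hypothesis: $F_i \setminus A_i = \{g \in F_i : D_{x_0}(g, \operatorname{diam}(C_i)) > \delta\}$, so $m(F_i \setminus A_i)/m(F_i) \to 0$ and hence $m(A_i)/m(F_i) = 1 - m(F_i \setminus A_i)/m(F_i) \to 1$. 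Since $\delta > 0$ was arbitrary, $\mu(C_i) > 0$ and $x_0 \in C_i$ for all $i$, and $f$ is continuous, every hypothesis of Lemma~\ref{Reduction lemma} is satisfied, and its conclusion $\lim_i \left| (\operatorname{Avg}_{F_i} f)(x_0) - \alpha_{C_i}(\operatorname{Avg}_{F_i} f) \right| = 0$ is exactly what we want.

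I do not expect any genuine obstacle, since this theorem is essentially a repackaging of Lemma~\ref{Reduction lemma} in which the existential ``there exists a net $(A_i)$'' clause has been replaced by the cleaner, directly checkable condition on $m(\{g \in F_i : D_{x_0}(g, \operatorname{diam}(C_i)) > \delta\})/m(F_i)$. The only places calling for a little care are the measurability bookkeeping for the sets $A_i$ — which is exactly what the measurability assumption on $D_{x_0}(\cdot, r)$ is there to supply — and, if one wants to be scrupulous, the degenerate possibility $\operatorname{diam}(C_i) = 0$, which can be absorbed either by interpreting $D_{x_0}(g, 0)$ as $0$ or by observing that such indices do not affect the limits in play.
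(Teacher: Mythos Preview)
Your proposal is correct and matches the paper's own proof essentially verbatim: the paper also sets $A_i = \{g \in F_i : D_{x_0}(g,\operatorname{diam}(C_i)) \leq \delta\}$ and invokes Lemma~\ref{Reduction lemma}. Your additional remarks on measurability and the degenerate case $\operatorname{diam}(C_i)=0$ are more careful than the paper itself, which leaves those points implicit.
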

	
	\begin{proof}
		For each $\delta > 0$, set
		$$
		A_i	= \left\{ g \in F_i : D_{x_0}(g, \operatorname{diam}(C_i)) \leq \delta \right\} .
		$$
		The result follows from Lemma \ref{Reduction lemma}.
	\end{proof}
	
	The assumption that $D_{x_0} (\cdot, r) : G \to (0, \infty)$ be a measurable function in $G$ for all $r \in (0, \infty)$, though relevant to make sure the sets $A_i$ in our proof are measurable, is satisfied automatically in the case where $G$ is discrete. Our use of this function $D_{x_0}$ ensures that the condition being imposed is in fact a decay condition on $\operatorname{diam}(C_i)$, in the sense that if $\left( C_i \right)_{i \in \mathscr{I}}$ is a net satisfying the condition
	\begin{align*}
		\lim_i \frac{m \left( \left\{ g \in F_i : D_{x_0}(g, \operatorname{diam}(C_i)) > \delta \right\} \right)}{m(F_i)}	& = 0	& (\forall \delta > 0) , 
	\end{align*}
	and $\left( C_i ' \right)_{i \in \mathscr{I}}$ is a net of measurable subsets of $X$ containing $x_0$ with positive measure, and $\operatorname{diam} \left( C_i ' \right) \leq \operatorname{diam}(C_i)$ for all $i \in \mathscr{I}$, then $\left( C_i' \right)_{i \in \mathscr{I}}$ will also satisfy the condition. However, this decay rate depends on $x_0$, a shortcoming which can be overcome with some additional conditions on the action $T$, as will be seen in Theorem \ref{Pointwise and temporo-spatial}.
	
	\begin{Def}
		Let $(X, p)$ be a pseudometric space (not necessarily compact), and let $T : G \curvearrowright X$ be an action of a group $G$ on $X$. We call the action \emph{H\"older} if for every $g \in G$ exist $H(g), L(g) \in (0, \infty)$ such that
		\begin{align*}
			p \left( T_g x , T_g y \right)	& \leq L(g) \cdot p(x, y)^{H(g)}	& (\forall g \in G , x \in X , y \in X) .
		\end{align*} 
	\end{Def}
	
	Our next result shows that if we assume that our action is H\"{o}lder, and the H\"older parameters of $T_g$ satisfy certain measurability properties as functions of $G$, then this diameter decay rate can be chosen independent of $x_0$. We remark now that our statement of the result is quite wordy, with several hypotheses, but as we'll explain shortly, several of these hypotheses are satisfied automatically in many cases.
	
	\begin{Thm}\label{Pointwise and temporo-spatial}
		Let $(X, p)$ be a compact pseudometric space, and let $T : G \curvearrowright X$ be a continuous action of a locally compact topological group $G$ (not necessarily amenable) on $X$. Let $\mu$ be a regular Borel probability measure on $X$. Assume further that there exist measurable functions $H , L : G \to (0, \infty)$ such that
		\begin{align*}
			p \left( T_g x , T_g y \right)	& \leq L(g) \cdot p(x, y)^{H(g)}	& (\forall g \in G , x \in X , y \in X) .
		\end{align*}
		Let $(F_i)_{i \in \mathscr{I}}$ be a net of compact subsets of $G$ with positive Haar measure $m$. Let $(C_i)_{i \in \mathscr{I}}$ be a net of measurable subsets of $X$ such that $\mu(C_i) > 0$ and $x \in C_i$ for all $i \in \mathscr{I}$. Suppose that for every $\delta > 0$, we have
		$$
		\lim_i \frac{m \left( \left\{ g \in F_i : L(g) \cdot \operatorname{diam}(C_i)^{H(g)} > \delta \right\} \right)}{m(F_i)} = 0 .
		$$
		Let $x_0 \in X$ be a point in $X$, and let $f : X \to \mathbb{C}$ be a uniformly bounded continuous function. Then
		$$
		\lim_i \left| \left( \operatorname{Avg}_{F_i} f \right) (x_0) - \alpha_{C_i} \left( \operatorname{Avg}_{F_i} f \right) \right| = 0 .
		$$
	\end{Thm}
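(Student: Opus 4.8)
The plan is to derive this as a direct application of Lemma \ref{Reduction lemma}, using the H\"older bound to replace the abstract quantity $D_{x_0}(g,r)$ that governs that lemma's hypothesis by the explicit expression $L(g)\cdot r^{H(g)}$, which is measurable in $g$ by assumption. First I would record the elementary estimate: for every $g \in G$ and every $r > 0$, since $t \mapsto t^{H(g)}$ is nondecreasing on $[0,\infty)$ (because $H(g)>0$),
\[
D_{x_0}(g,r) = \sup\left\{ p(T_g x_0, T_g x) : p(x_0,x) \leq r \right\} \leq \sup\left\{ L(g)\cdot p(x_0,x)^{H(g)} : p(x_0,x)\leq r \right\} \leq L(g)\cdot r^{H(g)} .
\]

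Next, fix $\delta > 0$ and, for each $i \in \mathscr{I}$, set $A_i = \left\{ g \in F_i : L(g)\cdot \operatorname{diam}(C_i)^{H(g)} \leq \delta \right\}$. Each $A_i$ is measurable: $F_i$ is compact, hence measurable, and $g \mapsto L(g)\cdot\operatorname{diam}(C_i)^{H(g)}$ is measurable on $G$ because $L$ and $H$ are. Since $x_0 \in C_i$, every $x \in C_i$ satisfies $p(x_0,x) \leq \operatorname{diam}(C_i)$, so by the estimate above, $g \in A_i$ forces $D_{x_0}(g, \operatorname{diam}(C_i)) \leq L(g)\cdot\operatorname{diam}(C_i)^{H(g)} \leq \delta$; that is, $A_i \subseteq \left\{ g \in F_i : D_{x_0}(g,\operatorname{diam}(C_i)) \leq \delta \right\}$. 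Moreover, the standing hypothesis of the theorem is precisely that $m(F_i \setminus A_i)/m(F_i) = m\left( \left\{ g \in F_i : L(g)\cdot\operatorname{diam}(C_i)^{H(g)} > \delta \right\} \right)/m(F_i) \to 0$, so $m(A_i)/m(F_i) \to 1$.

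Thus, for each $\delta > 0$ the net $(A_i)_{i\in\mathscr{I}}$ witnesses the hypotheses of Lemma \ref{Reduction lemma}, and since $f$ is continuous on the compact pseudometric space $X$ it is bounded and uniformly continuous (\cite[Lemma 3.1]{Assani-Young}); Lemma \ref{Reduction lemma} then gives $\lim_i \left| (\operatorname{Avg}_{F_i} f)(x_0) - \alpha_{C_i}(\operatorname{Avg}_{F_i} f) \right| = 0$, which is the claim. The only step requiring any attention is the measurability of the sets $A_i$, and this is exactly the role of the assumption that $H$ and $L$ be measurable on $G$: it lets us bypass the hypothesis, needed in the non-H\"older Theorem \ref{Non-Holder pointwise and temporo-spatial}, that $D_{x_0}(\cdot,r)$ itself be measurable. (Alternatively one could route through Theorem \ref{Non-Holder pointwise and temporo-spatial} directly, noting that $g \mapsto D_{x_0}(g,r)$ is lower semicontinuous, hence Borel, as a supremum of the continuous functions $g \mapsto p(T_g x_0, T_g x)$; but the route via Lemma \ref{Reduction lemma} above is cleaner.) Beyond this bookkeeping and the one-line monotonicity estimate, there is no substantial obstacle—the result is essentially a corollary of the reduction lemma.
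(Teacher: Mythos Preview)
Your proof is correct and follows essentially the same approach as the paper: both observe that the H\"older hypothesis gives $D_{x_0}(g,r) \leq L(g)\,r^{H(g)}$, define $A_i = \{g \in F_i : L(g)\cdot\operatorname{diam}(C_i)^{H(g)} \leq \delta\}$, and invoke Lemma~\ref{Reduction lemma}. Your version is simply more explicit about the measurability bookkeeping and the monotonicity step, and your parenthetical remark about lower semicontinuity of $D_{x_0}(\cdot,r)$ is a nice bonus observation not present in the paper.
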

	
	\begin{proof}
		We first observe that if $p(T_g x , T_g y) \leq L(g) p(x, y)^{H(g)}$, then $D_{x_0}(g, r) \leq L(g) r^{H(g)}$ for all $x_0 \in X$, so $\operatorname{diam}(T_g C_i) \leq L(g) \cdot \operatorname{diam}(C_i)^{H(g)}$. Given $\delta > 0$, set
		$$A_i = \left\{ g \in F_i : L(g) \cdot \operatorname{diam}(C_i)^{H(g)} \leq \delta \right\},$$
		and apply Lemma \ref{Reduction lemma}.
	\end{proof}
	
	\begin{Rmk}\label{Comments on Pointwise and t-s}
		\begin{itemize}
			\item If $G$ is discrete, then the measurability assumptions on $H, L$ are automatically fulfilled.
			\item If $T_g$ is Lipschitz for all $g \in G$, then we can take $H$ to be the constant function $1$. This is the case in particular if $T$ is an action on a compact Riemannian manifold $X$ by diffeomorphisms.
			\item In the special case where $G = \mathbb{Z}$, if both $T_1, T_{-1}$ are H\"older with exponent $\alpha_0$ and coefficient $L_0$, then for $n \geq 0$, we can take $H(n) = \alpha_0^{|n|} , L(n) = L_0^{|n|}$.
			\item If $G$ acts by isometries, then we can take $H, L$ to both be the constants $1$.
		\end{itemize}
	\end{Rmk}
	
	Theorem \ref{Pointwise and temporo-spatial} says that given a H\"older action $T$ of a group $G$ (subject to certain measurability conditions) on a compact pseudometric probability space, and an averaging net $(F_i)_{i \in \mathscr{I}}$, there exists a diameter decay rate such that if $(C_i)_{i \in \mathscr{I}}$ is a net of positive-measure sets containing a fixed point $x_0$, then the temporo-spatial derivative at $C_i$ will resemble the temporal pointwise average. Notably, this decay rate depends only on the averaging net and the H\"older condition on $T$, and not on the point $x_0$ or the function $f$.
	
	Theorem \ref{Pointwise and temporo-spatial} cannot be called sharp in the strictest sense, since given any net $(C_i)$ satisfying the hypotheses of Theorem \ref{Pointwise and temporo-spatial}, we could replace all the $C_i$ with $C_i \cup E$, where $E$ is some fixed subset of $X$ with positive diameter but measure $0$. A truly sharp Theorem \ref{Pointwise and temporo-spatial} would -at the very least- have to account for a notion of ``essential diameter."
	
	Under an additional assumption on the function being averaged, we can provide quantitative estimates on the approximation in Theorem \ref{Pointwise and temporo-spatial}.
	
	\begin{Prop}\label{Quantitative pointwise and temporo-spatial}
		Let $(X, p)$ be a compact pseudometric space, and let $T : G \curvearrowright X$ be a continuous action of a locally compact topological group $G$ (not necessarily amenable) on $X$. Let $\mu$ be a regular Borel probability measure on $X$. Assume further that there exist measurable functions $H , L : G \to (0, \infty)$ such that
		\begin{align*}
			p \left( T_g x , T_g y \right)	& \leq L(g) \cdot p(x, y)^{H(g)}	& (\forall g \in G , x \in X , y \in X) .
		\end{align*}
		Let $F$ be a compact subset of $G$ with positive Haar measure $m$, and let $C$ be a measurable subset of $X$ such that $\mu(C) > 0$.
		
		Let $x_0 \in X$ be a point in $X$, and let $f : X \to \mathbb{C}$ be a H\"older function with constants $c, \beta$ for which
		\begin{align*}
			|f(x) - f(y)|	& \leq c \cdot \rho(x, y)^\beta	& (\forall x, y \in X) .
		\end{align*}
		Then
		$$
		\left| \left( \operatorname{Avg}_{F} f \right) (x_0) - \alpha_{C} \left( \operatorname{Avg}_{F} f \right) \right| \leq \frac{c}{m(F)} \int_F L(g)^\beta \cdot \operatorname{diam}(C)^{\beta H(g)} \mathrm{d} m(g) .
		$$
	\end{Prop}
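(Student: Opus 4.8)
The plan is to run essentially the estimate from the proof of Lemma~\ref{Reduction lemma}, but without splitting $F$ into a ``good'' and a ``bad'' piece: here the H\"older hypotheses on both $T$ and $f$ give a clean bound on the integrand over all of $F$ at once, so no such decomposition is needed, and one simply keeps track of the constants.

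First I would use that $\alpha_C$ sends the constant function with value $\left( \operatorname{Avg}_F f \right)(x_0)$ to $\left( \operatorname{Avg}_F f \right)(x_0)$ itself (immediate from the definition of $\alpha_C$), and expand via Fubini's theorem:
\[
\left( \operatorname{Avg}_F f \right)(x_0) - \alpha_C \left( \operatorname{Avg}_F f \right) = \alpha_C \left( \left( \operatorname{Avg}_F f \right)(x_0) - \operatorname{Avg}_F f \right) = \frac{1}{\mu(C)} \int_C \frac{1}{m(F)} \int_F \bigl( f(T_g x_0) - f(T_g x) \bigr) \, \mathrm{d} m(g) \, \mathrm{d} \mu(x) .
\]
The use of Fubini is legitimate because $(g, x) \mapsto f(T_g x_0) - f(T_g x)$ is continuous, hence jointly measurable, and bounded on the finite-measure set $F \times C$. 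Taking absolute values and moving them inside both integrals yields
\[
\left| \left( \operatorname{Avg}_F f \right)(x_0) - \alpha_C \left( \operatorname{Avg}_F f \right) \right| \leq \frac{1}{\mu(C)} \int_C \frac{1}{m(F)} \int_F \bigl| f(T_g x_0) - f(T_g x) \bigr| \, \mathrm{d} m(g) \, \mathrm{d} \mu(x) .
\]

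The core of the argument is a pointwise bound on $\bigl| f(T_g x_0) - f(T_g x) \bigr|$ that is uniform over $x \in C$. For such $x$ we have $p(x_0, x) \leq \operatorname{diam}(C)$, so the H\"older hypothesis on the action gives, for every $g \in G$,
\[
p(T_g x_0, T_g x) \leq L(g) \cdot p(x_0, x)^{H(g)} \leq L(g) \cdot \operatorname{diam}(C)^{H(g)} ,
\]
and then the H\"older hypothesis on $f$ gives $\bigl| f(T_g x_0) - f(T_g x) \bigr| \leq c \cdot p(T_g x_0, T_g x)^\beta \leq c \cdot L(g)^\beta \cdot \operatorname{diam}(C)^{\beta H(g)}$. (Here $\operatorname{diam}(C)$ is finite because $X$ is compact, and the degenerate case $\operatorname{diam}(C) = 0$ is harmless.) Since this bound does not depend on $x$, substituting it into the previous display makes the outer average $\frac{1}{\mu(C)} \int_C (\cdots) \, \mathrm{d} \mu(x)$ collapse, and what remains is exactly $\frac{c}{m(F)} \int_F L(g)^\beta \operatorname{diam}(C)^{\beta H(g)} \, \mathrm{d} m(g)$, which is the asserted bound; measurability of this last integrand in $g$ follows from the assumed measurability of $H$ and $L$.

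I do not expect any real obstacle here. The only points meriting a word of care are the application of Fubini (joint measurability and boundedness of $(g, x) \mapsto f(T_g x_0) - f(T_g x)$, both clear from continuity of the action) and the measurability of $g \mapsto L(g)^\beta \operatorname{diam}(C)^{\beta H(g)}$; and, as in Lemma~\ref{Reduction lemma}, no measure-preservation hypothesis on $T$ is needed anywhere.
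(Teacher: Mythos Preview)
Your proposal is correct and follows essentially the same route as the paper's proof: expand $\alpha_C$ as a double integral, bring the absolute value inside, apply the H\"older bound on $f$ and then the H\"older bound on the action, replace $p(x_0,x)$ by $\operatorname{diam}(C)$, and collapse the now $x$-independent outer average. One small caveat worth flagging (present in the paper as well): the step $p(x_0,x)\le\operatorname{diam}(C)$ for $x\in C$ tacitly uses $x_0\in C$, which is assumed in the surrounding results but not explicitly stated in this proposition.
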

	
	\begin{proof}
		\begin{align*}
			& \left| \left( \operatorname{Avg}_{F} f \right) (x_0) - \alpha_{C} \left( \operatorname{Avg}_{F} f \right) \right| \\
			=	& \left| \alpha_{C} \left( \frac{1}{m(F)} \int_{F} \left(f\left(T_g x_0\right) - \left(f \circ T_g\right)\right) \mathrm{d} m(g) \right) \right| \\
			=	& \left| \frac{1}{m(C)} \int_C \frac{1}{m(F)} \int_{F} \left(f(T_g x_0) - (f(T_g x)) \right) \mathrm{d} m(g) \mathrm{d} \mu(x) \right| \\
			\leq	& \frac{1}{m(C)} \int_C \frac{1}{m(F)} \int_{F} \left| f\left(T_g x_0\right) - f\left(T_g x\right) \right| \mathrm{d} m(g) \mathrm{d} \mu(x) \\
			\leq	& \frac{1}{m(C)} \int_C \frac{1}{m(F)} \int_{F} c \cdot \rho \left( T_g x_0, T_g x \right)^\beta \mathrm{d} m(g) \mathrm{d} \mu(x) \\
			\leq	& \frac{1}{m(C)} \int_C \frac{1}{m(F)} \int_{F} c \cdot \left( L(g) \cdot \rho \left( x_0, x \right)^{H(g)} \right)^\beta \mathrm{d} m(g) \mathrm{d} \mu(x) \\
			\leq	& \frac{1}{m(C)} \int_C \frac{1}{m(F)} \int_{F} c \cdot \left( L(g) \cdot \operatorname{diam}(C)^{H(g)} \right)^\beta \mathrm{d} m(g) \mathrm{d} \mu(x) \\
			=	& c \frac{1}{m(C)} \int_C \frac{1}{m(F)} \int_{F} L(g)^\beta \cdot \operatorname{diam}(C)^{\beta H(g)} \mathrm{d} m(g) \mathrm{d} \mu(x) \\
			=	& \frac{c}{m(F)} \int_F L(g)^\beta \cdot \operatorname{diam}(C)^{\beta H(g)} \mathrm{d} m(g)
		\end{align*}
	\end{proof}
	
	Our next result takes us in the direction of a ``random temporo-spatial differentiation problem," where we consider a temporo-spatial problem in which the spatial averaging net is considered to be chosen ``randomly" according to some scheme or constraints.
	
	\begin{Cor}\label{Almost-sure convergence}
		Let $T : G \curvearrowright X$ be a continuous action of a locally compact topological group $G$ on a compact pseudometric space $X = (X, p)$, and let $\mu$ be a regular Borel probability measure on $X$, and let $(F_i)_{i \in \mathscr{I}}$ be a net in $G$. Let $H, L : G \to (0, \infty)$ be measurable functions such that
		\begin{align*}
			p \left( T_g x , T_g y \right)	& \leq L(g) \cdot p(x, y)^{H(g)}	& (\forall g \in G , x \in X , y \in X) .
		\end{align*}
		Suppose that for each $x \in X$, the net $(C_i(x))_{i \in \mathscr{I}}$ is a net of measurable subsets $C_i(x)$ of $X$ containing the point $x$ such that $\mu(C_i(x)) > 0$ for all $x \in X$, as well as that for almost all $x \in X$, we have
		$$\lim_i \frac{m \left( \left\{ g \in F_i : L(g) \cdot \operatorname{diam}(C_i(x))^{H(g)} > \delta \right\} \right)}{m(F_i)} = 0$$
		for all $\delta > 0$. Let $f : X \to \mathbb{C}$ be a continuous function, and suppose that for almost all $x \in X$, we have that $\lim_{i} \operatorname{Avg}_{F_i} f(x) = f^*(x)$, where $f^*$ is a measurable function $X \to \mathbb{C}$. Then
		$$\lim_i \alpha_{C_i(x)} \left( \operatorname{Avg}_{F_i} f \right) = f^*(x)$$
		for almost all $x \in X$.
	\end{Cor}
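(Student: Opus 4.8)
The plan is to deduce this directly from Theorem \ref{Pointwise and temporo-spatial}, applied at a generic point, combined with the triangle inequality. First I would let $X_1 \subseteq X$ be the full-measure set of points $x$ for which
$$\lim_i \frac{m\left( \left\{ g \in F_i : L(g) \cdot \operatorname{diam}(C_i(x))^{H(g)} > \delta \right\} \right)}{m(F_i)} = 0$$
holds for every $\delta > 0$, and let $X_2 \subseteq X$ be the full-measure set of points $x$ for which $\operatorname{Avg}_{F_i} f(x) \to f^*(x)$. Then $X_1 \cap X_2$ has full measure, and it suffices to establish the conclusion for each $x \in X_1 \cap X_2$.

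Next, fixing $x \in X_1 \cap X_2$, I would apply Theorem \ref{Pointwise and temporo-spatial} with $x_0 = x$ and with the net $(C_i(x))_{i \in \mathscr{I}}$ playing the role of the spatial averaging net. The hypotheses $\mu(C_i(x)) > 0$ and $x \in C_i(x)$ hold by assumption; the H\"older bound on $T$ is precisely the one given; the measurability of the sets $\left\{ g \in F_i : L(g) \cdot \operatorname{diam}(C_i(x))^{H(g)} > \delta \right\}$ follows because $H$ and $L$ are measurable and $\operatorname{diam}(C_i(x))$ is, for this fixed $x$, a scalar; the decay hypothesis of the theorem holds because $x \in X_1$; and $f$ is bounded since it is continuous on the compact space $X$. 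Theorem \ref{Pointwise and temporo-spatial} then yields
$$\lim_i \left| \left( \operatorname{Avg}_{F_i} f \right)(x) - \alpha_{C_i(x)}\left( \operatorname{Avg}_{F_i} f \right) \right| = 0 .$$

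Finally I would combine this with $x \in X_2$ through the triangle inequality
$$\left| \alpha_{C_i(x)}\left( \operatorname{Avg}_{F_i} f \right) - f^*(x) \right| \leq \left| \alpha_{C_i(x)}\left( \operatorname{Avg}_{F_i} f \right) - \left( \operatorname{Avg}_{F_i} f \right)(x) \right| + \left| \left( \operatorname{Avg}_{F_i} f \right)(x) - f^*(x) \right| ,$$
both of whose right-hand terms tend to $0$. As $x \in X_1 \cap X_2$ was arbitrary and this set has full measure, the asserted almost-everywhere convergence follows. I do not expect any genuine obstacle here: the argument is a pointwise invocation of an already-established theorem plus a two-term estimate, and the only mildly delicate point is the bookkeeping that intersects the two full-measure sets $X_1$ and $X_2$ and the observation that, for fixed $x$, the relevant subsets of $G$ are measurable because $\operatorname{diam}(C_i(x))$ is a constant and $H, L$ are measurable.
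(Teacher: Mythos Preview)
Your proposal is correct and follows essentially the same approach as the paper: intersect the two full-measure sets (the paper calls them $A$ and $B$, you call them $X_1$ and $X_2$), then for each point in the intersection invoke Theorem~\ref{Pointwise and temporo-spatial} and apply the two-term triangle inequality. The paper writes its set $A$ as a countable intersection over $\delta = 1/k$, but this is equivalent to your $X_1$ by monotonicity in $\delta$, and your additional remarks verifying the hypotheses of Theorem~\ref{Pointwise and temporo-spatial} are a welcome elaboration.
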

	
	\begin{Rmk}\label{Sufficient conditions for almost-sure convergence}
		Corollary \ref{Almost-sure convergence} is a tool that turns almost-sure pointwise convergence results from ergodic theory into almost-sure convergence results for classes of random temporo-spatial differentiations. Corollaries \ref{Lindenstrauss random TSD} and \ref{Bourgain random TSD}, corresponding to the Lindenstrauss pointwise ergodic theorem and Bourgain's theorem on pointwise convergence of averages along polynomials, respectively, are special cases of Corollary \ref{Almost-sure convergence}. In principle, there is a special case of Corollary \ref{Almost-sure convergence} corresponding to any result that ensures the almost-sure pointwise convergence of an ergodic average.
	\end{Rmk}
	
	\begin{proof}[Proof of Corollary \ref{Almost-sure convergence}]
		Let
		\begin{align*}
			A	& = \bigcap_{k = 1}^\infty \left\{ x\in X : \lim_i \frac{m \left( \left\{ g \in F_i : L(g) \cdot \operatorname{diam}(C_i(x))^{H(g)} > 1/k \right\} \right)}{m(F_i)} = 0 \right\}, \\
			B	& = \left\{ x \in X : \lim_i \operatorname{Avg}_{F_i} f(x) = f^*(x) \right\} .
		\end{align*}
		Both $A, B$ are of full measure by hypothesis, and thus so is $A \cap B$. Let $x \in A \cap B$. Then
		\begin{align*}
			\left| \alpha_{C_i(x)} \left( \operatorname{Avg}_{F_i} f \right) - f^*(x) \right|	& \leq \left| \alpha_{C_i(x)} \left( \operatorname{Avg}_{F_i} f \right) - \operatorname{Avg}_{F_i} f(x) \right| + \left| \operatorname{Avg}_{F_i} f(x) - f^* (x) \right| \\
			& \stackrel{i \to \infty}{\to} 0 ,
		\end{align*}
		where the first summand goes to $0$ (by Theorem \ref{Pointwise and temporo-spatial}) because $x \in A$ and the second summand goes to $0$ because $x \in B$.
	\end{proof}
	
	As a rule, results like Theorem \ref{Pointwise and temporo-spatial} lead naturally to results like Corollary \ref{Almost-sure convergence}, and we'll see several other examples of this in this article. Theorem \ref{Pointwise and temporo-spatial} provides a sufficient condition for a spatial averaging net $(C_i)_{i \in \mathscr{I}}$ around a point $x$ to induce a temporo-spatial differentiation problem that's reducible to a pointwise temporal problem at that point $x$; it then follows that if we have some scheme for associating to every point $x$ a spatial averaging net $(C_i(x))_{i \in \mathscr{I}}$ around $x$, and we know that $\operatorname{Avg}_{F_i} f(x) \to f^*(x)$ almost surely for $f \in C(X)$ continuous, then we have a convergence result for the ``random temporo-spatial differentiation problem" $\left(\alpha_{C_i(x)} \left( \operatorname{Avg}_{F_i} f \right) \right)_{i \in \mathscr{I}}$. There will be several other examples of results like Corollary \ref{Almost-sure convergence} in various contexts, taking some temporal pointwise reduction result like Theorem \ref{Pointwise and temporo-spatial} and extrapolating a statement about random temporo-spatial problems.
	
	It should be noted, however, that the convergence in Corollary \ref{Almost-sure convergence} will in general be only for almost every $x \in X$, rather than all $x \in X$. If there exists a point $x \in X$ where $\left( \operatorname{Avg}_{F_i} f(x) \right)_{i \in \mathscr{I}}$ does not converge to $f^*(x)$, then Theorem \ref{Pointwise and temporo-spatial} tells us that $\left( \alpha_{C_i(x)} \left( \operatorname{Avg}_{F_i} f \right) \right)_{i \in \mathscr{I}}$ won't either.
	
	\begin{Cor}\label{Probabilistic genericity}
		Let $T : G \curvearrowright X$ be an action of a locally compact topological group $G$ on a compact metric space $X = (X, \rho)$ that preserves a Borel probability measure $\mu$ on $X$, and let $(F_i)_{i \in \mathscr{I}}$ be a net. Let $H, L : G \to (0, \infty)$ be measurable functions such that
		\begin{align*}
			p \left( T_g x , T_g y \right)	& \leq L(g) \cdot p(x, y)^{H(g)}	& (\forall g \in G , x \in X , y \in X) .
		\end{align*}
		Suppose that for each $x \in X$, the net $(C_i(x))_{i \in \mathscr{I}}$ is a net of measurable subsets $C_i(x)$ of $X$ containing the point $x$ such that $\mu(C_i(x)) > 0$ for all $x \in X$, and
		$$\lim_i \frac{m \left( \left\{ g \in F_i : L(g) \cdot \operatorname{diam}(C_i(x))^{H(g)} > \delta \right\} \right)}{m(F_i)} = 0$$
		for almost all $x \in X$. Suppose that for $\mu$-almost all $x \in X$, we have
		\begin{align*}
			\lim_i \operatorname{Avg}_{F_i} f(x)	& = \int f \mathrm{d} \mu	& \left( \forall f \in C(X) \right) 
		\end{align*}
		Then for almost all $x \in X$, we have
		$$\lim_i \alpha_{C_i(x)} \left( \operatorname{Avg}_{F_i} f \right) = \int f \mathrm{d} \mu .$$
	\end{Cor}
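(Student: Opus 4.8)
The plan is to treat this as the special case of Corollary~\ref{Almost-sure convergence} in which the pointwise limit $f^*$ is the constant function $\int f \, \mathrm{d}\mu$, the one wrinkle being that we now want a \emph{single} full-measure set that works for every $f \in C(X)$ at once. Since $X$ is compact, every $f \in C(X)$ is bounded, so the continuity and boundedness hypotheses of Theorem~\ref{Pointwise and temporo-spatial} are automatically met and we may apply that theorem freely.

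Concretely, I would first set
$$A = \bigcap_{k=1}^{\infty} \left\{ x \in X : \lim_i \frac{m\left( \left\{ g \in F_i : L(g) \cdot \operatorname{diam}(C_i(x))^{H(g)} > 1/k \right\} \right)}{m(F_i)} = 0 \right\},$$
which by the diameter-decay hypothesis is a countable intersection of full-measure sets and hence of full measure; crucially, $A$ does not depend on $f$, because the averaging sets $C_i(x)$ are chosen without reference to $f$. Next, let $B$ be the full-measure set supplied by the genericity hypothesis, namely the set of $x \in X$ for which $\operatorname{Avg}_{F_i} f(x) \to \int f \, \mathrm{d}\mu$ for \emph{every} $f \in C(X)$ simultaneously; this is exactly what the hypothesis asserts, so the only place where the order of quantifiers matters is already taken care of in the statement. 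Then $A \cap B$ has full measure. For $x \in A \cap B$ and any $f \in C(X)$, the net $(C_i(x))_{i \in \mathscr{I}}$ satisfies the hypotheses of Theorem~\ref{Pointwise and temporo-spatial} (it contains $x$, has positive measure, and obeys the required diameter decay because $x \in A$), so $\left| \operatorname{Avg}_{F_i} f(x) - \alpha_{C_i(x)}\left( \operatorname{Avg}_{F_i} f \right) \right| \to 0$; combining this with the triangle inequality
$$\left| \alpha_{C_i(x)}\left( \operatorname{Avg}_{F_i} f \right) - \int f \, \mathrm{d}\mu \right| \leq \left| \alpha_{C_i(x)}\left( \operatorname{Avg}_{F_i} f \right) - \operatorname{Avg}_{F_i} f(x) \right| + \left| \operatorname{Avg}_{F_i} f(x) - \int f \, \mathrm{d}\mu \right|$$
and the fact that the last term tends to $0$ because $x \in B$, I obtain $\lim_i \alpha_{C_i(x)}\left( \operatorname{Avg}_{F_i} f \right) = \int f \, \mathrm{d}\mu$.

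I do not expect any genuine obstacle here: this is the argument in the proof of Corollary~\ref{Almost-sure convergence} with $f^* \equiv \int f \, \mathrm{d}\mu$, and the only thing to verify beyond that proof is the $f$-independence of the exceptional set, which is immediate — the pointwise-convergence part is handed to us uniformly in $f$ by the hypothesis, and the diameter-decay part never involved $f$ to begin with. If one preferred to invoke Corollary~\ref{Almost-sure convergence} as a black box rather than re-running its proof, one could apply it to each member of a countable dense subset of $C(X)$, take the countable intersection of the resulting full-measure sets, and then pass to an arbitrary $f \in C(X)$ via a routine $3\varepsilon$-estimate using that $h \mapsto \alpha_{C_i(x)}(\operatorname{Avg}_{F_i} h)$ is a contraction on $L^\infty(X,\mu)$; but the direct route above is cleaner and does not even need separability of $C(X)$.
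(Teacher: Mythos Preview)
Your proposal is correct. Interestingly, the paper takes precisely the \emph{alternative} route you sketch in your final paragraph: it chooses a countable set $\{f_n\}$ with dense span in $C(X)$ (using that $X$ is compact metrizable), defines $A_n = \{x : \lim_i \alpha_{C_i(x)}(\operatorname{Avg}_{F_i} f_n) = \int f_n\,\mathrm{d}\mu\}$, argues each $A_n$ has full measure (implicitly via Corollary~\ref{Almost-sure convergence}), and then runs a $3\epsilon$ approximation to pass to an arbitrary $f \in C(X)$ on $\bigcap_n A_n$.

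Your direct route is genuinely cleaner. You correctly observe that the hypothesis is stated with the strong quantifier order --- a single full-measure set $B$ on which pointwise convergence holds for \emph{every} $f \in C(X)$ --- so the $f$-uniformity is already baked in and there is no need to manufacture it via separability and density. The paper's detour through a dense family would be necessary if the hypothesis had only guaranteed, for each $f$ separately, a full-measure set of good points; but as written that is not the situation. Your argument re-runs the proof of Corollary~\ref{Almost-sure convergence} essentially verbatim, with $B$ upgraded to the $f$-universal set, and this dispenses with both the separability of $C(X)$ and the $3\epsilon$ bookkeeping. The paper's approach buys nothing extra here, though it would be the right move under the weaker quantifier order.
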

	
	\begin{proof}
		Since $X$ is compact metrizable, it follows that $C(X)$ is separable, so let $\left\{ f_n : n \in \mathbb{N} \right\}$ be a subset of $C(X)$ with dense span. For each $n \in \mathbb{N}$, set
		\begin{align*}
			A_n	& = \left\{ x \in X : \lim_i \alpha_{C_i(x)}\left( \operatorname{Avg}_{F_i} f_n \right) = \int f \mathrm{d} \mu \right\} .
		\end{align*}
		Each $A_n$ is of full measure.
		
		Let $f \in C(X)$, and fix $N \in \mathbb{N}$. Choose $J_N \in \mathbb{N}$ and a sequence $z_{1, N}, \ldots, z_{J_N, N} \in \mathbb{C}$ such that
		$$
		\lim_{N \to \infty} \left\| f - \sum_{j = 1}^{J_N} z_{j, N} f_j \right\|_{C(X)} \leq \frac{1}{3 N} .
		$$
		For convenience, set $\phi_N = \sum_{j = 1}^{J_N} z_{j, N} f_j$. Then $\left\| \int f \mathrm{d} \mu - \int \phi_N \mathrm{d} \mu \right\|_{L^\infty(X, \mu)} \leq \| f - \phi_N \|_{C(X)} \leq \frac{\epsilon}{3}$.
		
		Now for $j \in \{1, \ldots, J_N\}$, choose $i_{j, N} \in \mathscr{I}$ such that if $i \geq i_{j, N}$, then
		$$
		\left| \alpha_{C_i}\left( \operatorname{Avg}_{F_i} f_j(x) \right) - \int f \mathrm{d} \mu \right| \leq \frac{1}{3 N^2 \max \{ z_{j, N} , 1 \}} .
		$$
		Choose $I_N \in \mathscr{I}$ such that $I_N \geq i_{j, N}$ for all $j \in \{1, \ldots, J_N\}$, and let $x \in \bigcap_{n \in \mathbb{N}} A_n$. Then if $i \geq I_N$, we have
		\begin{align*}
			\left| \alpha_{C_i} \left( \operatorname{Avg}_{F_i} f \right) - \int f \mathrm{d} \mu \right|	& \leq \left| \alpha_{C_i} \left( \operatorname{Avg}_{F_i} f \right) - \alpha_{C_i} \left( \operatorname{Avg}_{F_i} \phi_N \right) \right| \\
			& + \left| \alpha_{C_i} \left( \operatorname{Avg}_{F_i} \phi_N \right) - \int \phi_N \mathrm{d} \mu \right| \\
			& + \left| \int \phi_N \mathrm{d} \mu - \int f \mathrm{d} \mu \right| .
		\end{align*}
		We bound each of the three summands by $\frac{1}{3 N}$ in turn.
		Firstly, we can see that
		$$
		\left| \alpha_{C_i} \left( \operatorname{Avg}_{F_i} f \right) - \alpha_{C_i} \left( \operatorname{Avg}_{F_i} \phi_N \right) \right| = \left| \alpha_{C_i} \left( \operatorname{Avg}_{F_i} \left( f - \phi_N \right) \right) \right| \leq \left\| f - \phi_N \right\|_{C(X)} \leq \frac{1}{3 N} ,
		$$
		which addresses the first summand. For the second summand, we have
		\begin{align*}
			\left| \alpha_{C_i} \left( \operatorname{Avg}_{F_i} \phi_N \right) - \int \phi_N \mathrm{d} \mu \right|	& = \left| \sum_{j = 1}^{J_N} z_{j, N} \left( \alpha_{C_i} \left( \operatorname{Avg}_{F_i} f_j \right) - \int f_j \mathrm{d} \mu \right) \right| \\
			& \leq \sum_{j = 1}^N |z_{j, N}| \cdot \left| \alpha_{C_i} \left( \operatorname{Avg}_{F_i} f_j \right) - \int f_j \mathrm{d} \mu \right| \\
			& \leq \sum_{j = 1}^N |z_{j, N}| \frac{1}{3 N^2 \max\{ |z_{j, N}| , 1\}} \\
			& \leq \frac{1}{3 N} .
		\end{align*}
		Finally, for the third summand, we have that
		$$\left| \int \phi_N \mathrm{d} \mu - \int f \mathrm{d} \mu \right| \leq \| \phi_N - f \|_{C(X)} \leq \frac{1}{3 N} .$$
		Taken together, these tell us that for every $N \in \mathbb{N}, x \in \bigcap_{n \in \mathbb{N}} A_n$, there exists $I \in \mathscr{I}$ such that if $i \geq I$, then $\left| \alpha_{C_i} \left( \operatorname{Avg}_{F_i} f \right) - \int f \mathrm{d} \mu \right| \leq \frac{1}{N}$. Therefore $\lim_{i} \alpha_{C_i} \left( \operatorname{Avg}_{F_i} f \right) = \int f \mathrm{d} \mu$ for all $x \in \bigcap_{n \in \mathbb{N}} A_n$. Since each $A_n$ is of full measure, it follows that their countable intersection $\bigcap_{n = 1}^\infty A_n$ is of full measure, yielding our desired almost-sure convergence.
	\end{proof}
	
	This result tells us that if to (almost) every $x \in X$ we assign a net $(C_i(x))$ of sets of positive measure with rapidly decaying diameter, and $\operatorname{Avg}_{F_i} f \to \int f \mathrm{d} \mu$ for all $f \in C(X)$ then the "probabilistically generic" behavior is that $\alpha_{C_i} \left( \operatorname{Avg}_{F_i} f \right) \to \int f \mathrm{d} \mu$.
	
	Corollary \ref{Probabilistic genericity} encompasses several results from \cite{Assani-Young}, including Theorem  2.1, Theorem 2.3, and Corollary 3.3. Proposition 3.2 from that paper can also be recovered from our Theorem \ref{Almost-sure convergence}. Corollary \ref{Probabilistic genericity} is motivated by the desire to find positive convergence results for temporo-spatial differentiations relative to actions of groups other than $\mathbb{Z}$, as well as to find to find positive convergence results for temporo-spatial differentiations relative to averages over other subsequences of $\mathbb{Z}$. Moreover, Corollary \ref{Probabilistic genericity} opens the door to temporo-spatial differentiations along subsequences. We present here a few examples.
	
	\begin{Cor}\label{Lindenstrauss random TSD}
		Let $T : G \curvearrowright X$ be an action of a locally compact amenable topological group $G$ on a compact pseudometric space $X = (X, p)$  that preserves a regular Borel probability measure $\mu$ on $X$, and let $(F_k)_{k \in \mathbb{N}}$ be a tempered \Folner \space sequence for $G$. Let $H, L : G \to (0, \infty)$ be measurable functions such that
		\begin{align*}
			p \left( T_g x , T_g y \right)	& \leq L(g) \cdot p(x, y)^{H(g)}	& (\forall g \in G , x \in X , y \in X) .
		\end{align*}
		Suppose that for each $x \in X$, the sequence $(C_k(x))_{k \in \mathbb{N}}$ is a sequence of measurable subsets $C_k(x)$ of $X$ containing the point $x$ such that $\mu(C_k(x)) > 0$ for all $x \in X$, and
		$$\lim_{k \to \infty} \frac{m \left( \left\{ g \in F_k : L(g) \cdot \operatorname{diam}(C_k(x))^{H(g)} > \delta \right\} \right)}{m(F_k)} = 0$$
		for almost all $x \in X$.
		
		Then given $f \in C(X)$, for almost all $x \in X$, we have
		$$\lim_{k \to \infty} \alpha_{C_k(x)} \left( \operatorname{Avg}_{F_k} f \right) = \mathbb{E} f (x),$$
		where $\mathbb{E}$ is the projection onto the space of $T$-invariant functions in $L^\infty(X, \mu)$.
	\end{Cor}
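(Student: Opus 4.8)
The plan is to obtain this as an immediate consequence of Corollary \ref{Almost-sure convergence}, with the almost-sure pointwise convergence hypothesis of that corollary supplied by the Lindenstrauss pointwise ergodic theorem. First I would check that the standing hypotheses of Corollary \ref{Almost-sure convergence} are all in force: $T : G \curvearrowright X$ is a continuous action of a locally compact group on a compact pseudometric space $X = (X, p)$; $\mu$ is a regular Borel probability measure; $H, L : G \to (0, \infty)$ are measurable and witness the H\"older bound $p(T_g x, T_g y) \leq L(g)\, p(x,y)^{H(g)}$; the sets $C_k(x)$ contain $x$ and have positive measure; and for $\mu$-almost every $x \in X$ the diameter decay condition
$$\lim_{k \to \infty} \frac{m\left( \left\{ g \in F_k : L(g) \cdot \operatorname{diam}(C_k(x))^{H(g)} > \delta \right\} \right)}{m(F_k)} = 0$$
holds for all $\delta > 0$. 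Each of these is assumed verbatim in the statement.

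Second, I would invoke the Lindenstrauss pointwise ergodic theorem: since $G$ is amenable and $(F_k)_{k \in \mathbb{N}}$ is a \emph{tempered} \Folner \space sequence, for every $\phi \in L^1(X, \mu)$ the averages $\operatorname{Avg}_{F_k} \phi$ converge $\mu$-almost everywhere to $\mathbb{E}\phi$, the projection onto the subspace of $T$-invariant functions. Here I use that the action \emph{preserves} $\mu$ (this is needed for Lindenstrauss, although Corollary \ref{Almost-sure convergence} itself does not require it) and that $f \in C(X) \subseteq L^\infty(X, \mu) \subseteq L^1(X, \mu)$, so the theorem applies to our $f$ and yields $\lim_k \operatorname{Avg}_{F_k} f(x) = \mathbb{E}f(x)$ for $\mu$-almost every $x$.

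Third, I would apply Corollary \ref{Almost-sure convergence} with $f^* := \mathbb{E}f$; its conclusion is precisely $\lim_k \alpha_{C_k(x)}\left( \operatorname{Avg}_{F_k} f \right) = f^*(x) = \mathbb{E}f(x)$ for $\mu$-almost every $x$, which is the assertion to be proved.

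The only step with any real content is the citation of the Lindenstrauss theorem, and there is no genuine obstacle: no new estimate is needed beyond those already established in Theorem \ref{Pointwise and temporo-spatial} and Corollary \ref{Almost-sure convergence}. The points to handle with care are purely bookkeeping --- confirming that ``tempered \Folner \space sequence'' is exactly the hypothesis the Lindenstrauss theorem demands, and that the inclusion $C(X) \subseteq L^1(X, \mu)$ lets us feed a continuous $f$ into that pointwise convergence theorem --- so the write-up should be short.
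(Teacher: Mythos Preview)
Your proposal is correct and matches the paper's approach: invoke the Lindenstrauss pointwise ergodic theorem to obtain $\operatorname{Avg}_{F_k} f \to \mathbb{E}f$ almost surely, then feed this into the earlier almost-sure convergence corollary. The paper's one-line proof actually cites Corollary~\ref{Probabilistic genericity} rather than Corollary~\ref{Almost-sure convergence}, but your choice is the more accurate one, since the limit here is $\mathbb{E}f(x)$ rather than $\int f\,\mathrm{d}\mu$ and Corollary~\ref{Almost-sure convergence} is the result stated for a general limit $f^*$.
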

	
	\begin{proof}
		The Lindenstrauss Ergodic Theorem \cite[Theorem 3.3]{LindenstraussErgodicTheorem} tells us that $\operatorname{Avg}_{F_k} f \to \mathbb{E} f$ almost surely, so we can apply Corollary \ref{Probabilistic genericity}.
	\end{proof}
	
	\begin{Cor}\label{Bourgain random TSD}
		Let $P \in \mathbb{R}[t]$ be a polynomial with real coefficients, and let $T : \mathbb{Z} \curvearrowright X$ be an action of $\mathbb{Z}$ on a compact pseudometric space $X = (X, p)$ that preserves a regular Borel probability measure $\mu$ on $X$. Let $F_k = \left\{ \lfloor P (1) \rfloor , \lfloor P (2) \rfloor , \ldots, \lfloor P (k) \rfloor \right\}$ for all $k \in \mathbb{N}$, and let $H, L : G \to (0, \infty)$ be functions such that
		\begin{align*}
			p \left( T_n x , T_n y \right)	& \leq L(n) \cdot p(x, y)^{H(n)}	& (\forall n \in \mathbb{Z} , x \in X , y \in X) .
		\end{align*}
		Suppose that for each $x \in X$, the sequence $(C_k(x))_{k \in \mathbb{N}}$ is a sequence of measurable subsets $C_k(x)$ of $X$ containing the point $x$ such that $\mu(C_k(x)) > 0$ for all $x \in X$, and
		$$\lim_{k \to \infty} \frac{m \left( \left\{ n \in F_k : L(n) \cdot \operatorname{diam}(C_k(x))^{H(n)} > \delta \right\} \right)}{m(F_k)} = 0$$
		for almost all $x \in X$. Let $f \in C(X)$. Then there exists a function $f^* \in L^\infty (X, \mu)$ such that for almost all $x \in X$, we have
		$$\lim_{k \to \infty} \alpha_{C_k(x)} \left( \operatorname{Avg}_{F_k} f \right) = f^* (x) .$$
	\end{Cor}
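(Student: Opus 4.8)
The plan is to derive this as a direct instance of Corollary \ref{Almost-sure convergence}, feeding it the almost-everywhere pointwise convergence of $\operatorname{Avg}_{F_k} f$ supplied by Bourgain's pointwise ergodic theorem for polynomial averages. Since $X$ is compact, $f \in C(X)$ is bounded, so $f \in L^\infty(X, \mu) \subseteq L^p(X, \mu)$ for every finite $p$, which is well inside the range of functions covered by Bourgain's theorem.

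The first thing I would do is identify exactly what $\operatorname{Avg}_{F_k} f$ is. Under the convention for discrete groups fixed earlier, $\operatorname{Avg}_{F_k} f = \frac{1}{|F_k|} \sum_{g \in F_k} f \circ T_g$ is normalized by the \emph{cardinality} of the set $F_k$, not by $k$, and this matters because $\lfloor P(n) \rfloor$ may attain values with multiplicity. If $P$ is constant the sets $F_k$ are eventually singletons and the claim is trivial, so assume $\deg P \geq 1$ and, after replacing $P(t)$ by $P(-t)$ if necessary, that $P$ has positive leading coefficient, so that $\lfloor P(n) \rfloor$ is eventually nondecreasing. I claim there is a fixed polynomial $Q \in \mathbb{R}[t]$, with $\deg Q = \deg P$ and positive leading coefficient, such that the increasing enumeration of $F_k$ coincides with $\left( \lfloor Q(j) \rfloor \right)_{1 \leq j \leq |F_k|}$ outside a $k$-independent bounded initial block: if $\lfloor P(\cdot) \rfloor$ is eventually strictly increasing --- automatic when $\deg P \geq 2$, and immediate when $\deg P = 1$, say $P(t) = at + b$ with $|a| \geq 1$ --- then outside a bounded initial block the enumeration is $\lfloor P(n + c) \rfloor$ for a suitable constant $c$; and when $\deg P = 1$ with $|a| < 1$, $\lfloor P(\cdot) \rfloor$ skips no integer, so the enumeration is a run of consecutive integers, i.e.\ $\lfloor Q(j) \rfloor$ for an affine $Q$. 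Since only a bounded initial segment is affected and $|F_k| \to \infty$, this gives
$$ \operatorname{Avg}_{F_k} f(x) = \frac{1}{|F_k|} \sum_{j = 1}^{|F_k|} f\!\left( T_{\lfloor Q(j) \rfloor} x \right) + o(1) $$
uniformly in $x \in X$.

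Next I would apply Bourgain's pointwise ergodic theorem to the polynomial $Q$: the Cesàro averages on the right-hand side above converge for $\mu$-almost every $x$, and I would call their limit $f^*(x)$; since $\left\| \operatorname{Avg}_{F_k} f \right\|_{L^\infty(X, \mu)} \leq \| f \|_{C(X)}$ for all $k$, we get $f^* \in L^\infty(X, \mu)$, and $\operatorname{Avg}_{F_k} f(x) \to f^*(x)$ for $\mu$-almost every $x$. From there the argument reduces to Corollary \ref{Almost-sure convergence}, whose hypotheses are all in place: $G = \mathbb{Z}$ is discrete, hence locally compact, so the measurability of $H, L : \mathbb{Z} \to (0, \infty)$ is automatic (Remark \ref{Comments on Pointwise and t-s}); the compact pseudometric space $(X, p)$, the regular Borel probability measure $\mu$, the Hölder-type bound, and the hypotheses on the sequences $\left( C_k(x) \right)_{k \in \mathbb{N}}$ are exactly those assumed; and $\operatorname{Avg}_{F_k} f(x) \to f^*(x)$ a.e.\ is the last ingredient. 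Corollary \ref{Almost-sure convergence} then yields $\lim_{k \to \infty} \alpha_{C_k(x)}\!\left( \operatorname{Avg}_{F_k} f \right) = f^*(x)$ for $\mu$-almost every $x$, which is the assertion.

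I expect the main obstacle to be precisely the bookkeeping in the second paragraph. One should \emph{not} naively compare $\operatorname{Avg}_{F_k} f$ with $\frac{1}{k} \sum_{n = 1}^{k} f\!\left( T_{\lfloor P(n) \rfloor} x \right)$: when $\lfloor P(\cdot) \rfloor$ attains values with multiplicity, that average weights lattice points unevenly and in general converges to a \emph{different} pointwise limit (already for $P(t) = (2/3) t$). The right move is to deduplicate the averaging set and recognize it, up to a bounded initial block, as a polynomial subsequence $\lfloor Q(\cdot) \rfloor$ to which Bourgain's theorem applies verbatim. Once that identification is made, everything else is a routine verification that the hypotheses of Corollary \ref{Almost-sure convergence} hold.
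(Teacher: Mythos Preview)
Your overall approach matches the paper's: invoke Bourgain's pointwise theorem for polynomial averages to get almost-everywhere convergence of $\operatorname{Avg}_{F_k} f$, then apply Corollary~\ref{Almost-sure convergence}. The paper's proof is exactly those two steps and nothing more.

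The bookkeeping in your second paragraph goes beyond what the paper does, and the issue you flag is real: Bourgain's theorem is stated for the multiset averages $\frac{1}{k}\sum_{n=1}^{k} f\bigl(T^{\lfloor P(n)\rfloor}x\bigr)$, whereas $\operatorname{Avg}_{F_k} f$ is normalized by the cardinality of the \emph{set} $F_k$, and your $P(t)=\tfrac{2}{3}t$ example correctly shows these can have different pointwise limits. The paper glosses over this distinction entirely. Your fix---recognizing the deduplicated $F_k$ as $\{\lfloor Q(j)\rfloor : 1\le j\le |F_k|\}$ for a polynomial $Q$, up to a bounded block---is sound and closes a genuine gap in the paper's two-line argument. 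One minor slip: replacing $P(t)$ by $P(-t)$ does not preserve the set $F_k$, so it is not a literal WLOG reduction; but since the argument for negative leading coefficient is symmetric (eventually strictly \emph{de}creasing rather than increasing), this is easily patched.
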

	
	\begin{proof}
		By \cite[Theorem 2]{BourgainPolynomials}, there exists $f^* \in L^\infty(X, \mu)$ such that $\operatorname{Avg}_{F_k} f(x) \to f^*(x)$ almost surely. Apply Corollary \ref{Almost-sure convergence}.
	\end{proof}
	
	Finally, we remark that a form of the pointwise reduction in Theorem \ref{Non-Holder pointwise and temporo-spatial} can be recovered in the context of nonconventional ergodic averages. In order to make the statement of this result a bit more readable, we use slightly different notation for the remainder of this section than we used in previous parts of this article, using $T_\ell$ to refer to an $\ell$th homeomorphism, rather than an action of the integer $\ell \in \mathbb{Z}$.
	
	\begin{Thm}\label{Pointwise reduction for multiple ergodic averages}
		Let $(X, p)$ be a compact pseudometric space, and let $T_1, \ldots, T_L$ be a family of homeomorphisms $T_\ell : X \to X$. Let $\mu$ be a regular Borel probability measure on $X$ invariant under each $T_\ell$. Let $\left( n_j^{(1)} \right)_{j = 1}^\infty , \ldots, \left( n_j^{(L)} \right)_{j = 1}^\infty$ be sequences of integers.
		
		Fix a point $x_0 \in X$, and let $(C_k)_{k = 1}^\infty$ be a sequence of measurable subsets of $X$ with positive measure for which $x_0 \in C_k$ and suppose that for each $\ell = 1, \ldots, \ell$, and every $\delta \in (0, \infty)$, we have that
		$$\frac{ \left| \left\{ j \in \{0, 1, \ldots, k - 1\} : \operatorname{diam} \left( T_\ell^{n_j^{(\ell)}} C_k \right) \geq \delta \right\} \right| }{k} \to 0 . $$
		
		Let $f_0, f_1, \ldots, f_L \in C(X)$. Then
		$$
		\lim_{k \to \infty} \left| \left( \frac{1}{k} \sum_{j = 0}^{k - 1} f_0(x_0) \prod_{\ell = 1}^L T_\ell^{n_j^{(\ell)}} f_\ell(x_0) \right) - \alpha_{C_k} \left( \frac{1}{k} \sum_{j = 0}^{k - 1} f_0 \prod_{\ell = 1}^L T_\ell^{n_j^{(\ell)}} f_\ell \right) \right| = 0 .
		$$
	\end{Thm}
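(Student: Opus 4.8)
The plan is to run the argument of Lemma~\ref{Reduction lemma}, with the multilinear structure of the nonconventional averages absorbed by a telescoping estimate. Write $F_j := f_0 \cdot \prod_{\ell=1}^L \left( f_\ell \circ T_\ell^{n_j^{(\ell)}} \right) \in C(X)$, so that the quantity to be controlled is $\Delta_k := \left| \frac{1}{k}\sum_{j=0}^{k-1} F_j(x_0) - \alpha_{C_k}\!\left( \frac{1}{k}\sum_{j=0}^{k-1} F_j \right) \right|$. Since $\alpha_{C_k}$ is linear this equals $\left| \frac{1}{k}\sum_{j=0}^{k-1}\left( F_j(x_0) - \alpha_{C_k}(F_j) \right) \right|$, and since $\alpha_{C_k}(F_j) = \mu(C_k)^{-1}\int_{C_k} F_j \,\mathrm{d}\mu$ is an average of the values $\{F_j(x) : x \in C_k\}$, we obtain the reduction $\Delta_k \le \frac{1}{k}\sum_{j=0}^{k-1} \sup_{x \in C_k}\left| F_j(x) - F_j(x_0) \right|$, so it suffices to show the right-hand side tends to $0$.

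Fix $\epsilon > 0$ and set $M := \max\{1, \|f_0\|_{C(X)}, \dots, \|f_L\|_{C(X)}\} < \infty$. For $x \in C_k$, a standard telescoping of the difference of the two products (replace one factor at a time) gives
\[
\left| F_j(x) - F_j(x_0) \right| \le M^L\left( |f_0(x) - f_0(x_0)| + \sum_{\ell=1}^L \left| f_\ell\!\left(T_\ell^{n_j^{(\ell)}} x\right) - f_\ell\!\left(T_\ell^{n_j^{(\ell)}} x_0\right)\right| \right).
\]
Each $f_\ell$ is uniformly continuous on the compact space $X$ (cf.\ \cite[Lemma 3.1]{Assani-Young}), so choose $\delta > 0$ with $p(y,y') \le \delta \Rightarrow |f_\ell(y) - f_\ell(y')| < \epsilon$ for all $\ell \in \{0,1,\dots,L\}$. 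If $x \in C_k$ then $T_\ell^{n_j^{(\ell)}} x, T_\ell^{n_j^{(\ell)}} x_0 \in T_\ell^{n_j^{(\ell)}} C_k$, so the $\ell$-th summand is $<\epsilon$ whenever $\operatorname{diam}\!\left(T_\ell^{n_j^{(\ell)}} C_k\right) \le \delta$, and the $f_0$-summand is $< \epsilon$ whenever $\operatorname{diam}(C_k) \le \delta$.

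Now call an index $j \in \{0,\dots,k-1\}$ \emph{good} if $\operatorname{diam}\!\left(T_\ell^{n_j^{(\ell)}} C_k\right) \le \delta$ for every $\ell \in \{1,\dots,L\}$, and \emph{bad} otherwise; intersecting the $L$ density hypotheses shows the proportion of bad $j$ in $\{0,\dots,k-1\}$ tends to $0$ as $k \to \infty$. For large $k$ and $j$ good we get $\sup_{x\in C_k}|F_j(x) - F_j(x_0)| \le (L+1)M^L\epsilon$, while for every $j$ we have the crude uniform bound $\sup_{x\in C_k}|F_j(x) - F_j(x_0)| \le 2M^{L+1}$. Splitting $\frac{1}{k}\sum_j$ over good and bad indices yields $\Delta_k \le (L+1)M^L\epsilon + 2M^{L+1}\cdot k^{-1}\#\{\text{bad } j < k\}$, hence $\limsup_k \Delta_k \le (L+1)M^L\epsilon$; letting $\epsilon \downarrow 0$ completes the proof.

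The genuinely new point beyond Lemma~\ref{Reduction lemma} is the bookkeeping for the product: one must telescope so that the small fraction of ``bad'' indices is killed by the \emph{uniform} bound $\|F_j\|_{C(X)} \le M^{L+1}$ (which is where compactness of $X$, hence boundedness of each $f_\ell$, is used), and one must combine the $L$ separate diameter-decay hypotheses. The one step requiring care is the factor $f_0$, which is not moved by any $T_\ell$: controlling it needs $\sup_{x\in C_k}|f_0(x)-f_0(x_0)| \to 0$, i.e.\ $\operatorname{diam}(C_k)\to 0$, a condition that should be read off from the stated hypotheses (for instance by allowing the index $\ell$ to range over a trivial transformation $T_0 = \operatorname{id}$), since for non-constant $f_0$ the conclusion genuinely fails otherwise — already for $L=1$ with $\mu$ a point mass at an attracting fixed point.
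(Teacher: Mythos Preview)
Your argument is essentially the paper's: set $T_0=\operatorname{id}$, telescope the product $\prod_{\ell=0}^L f_\ell\!\circ T_\ell^{n_j^{(\ell)}}$, split $\{0,\dots,k-1\}$ into good indices (where all $\operatorname{diam}(T_\ell^{n_j^{(\ell)}}C_k)$ are below the uniform-continuity threshold) and bad indices, bound the good part by $(L+1)M^L\epsilon$ and the bad part by the crude $2M^{L+1}$ times a vanishing density. The paper's write-up is a bit more explicit about the constants but follows the same skeleton.

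Your closing remark about $f_0$ is not a quibble; it is a genuine gap in the theorem as stated, and the paper's own proof shares it. The paper defines $A_k=\bigcap_{\ell=1}^L A_k^{(\ell)}$ (note the range $\ell\ge 1$) and then asserts that for $j\in A_k$ and $x\in C_k$ one has $p\bigl(T_\ell^{n_j^{(\ell)}}x,\,T_\ell^{n_j^{(\ell)}}x_0\bigr)<\delta$ for \emph{all} $\ell$, including $\ell=0$; but membership in $A_k$ says nothing about $\operatorname{diam}(C_k)$, so the $h=0$ term of the telescoping sum is uncontrolled. Your counterexample sketch is correct: take $X=[0,1]$, $T_1(x)=x^2$, $\mu=\delta_0$ (which is $T_1$-invariant), $x_0=\tfrac12$, $C_k\equiv[0,\tfrac34]$, $n_j^{(1)}=j$. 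Then $\operatorname{diam}(T_1^{\,j}C_k)=(\tfrac34)^{2^j}\to 0$, so the stated hypothesis holds for every $\delta>0$, yet with $f_0(x)=x$ and $f_1\equiv 1$ the pointwise average at $x_0$ tends to $\tfrac12$ while $\alpha_{C_k}$ of the average is identically $0$. The fix is exactly what you propose: add the hypothesis $\operatorname{diam}(C_k)\to 0$, or equivalently extend the density assumption to $\ell=0$ with $T_0=\operatorname{id}$.
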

	
	\begin{proof}
		For the sake of making some notation in this proof more concise, we'll write
		\begin{align*}
			T_0	& = \operatorname{id}_X , \\
			n_j^{(0)}	& = 1	& (\forall j \geq 0) ,
		\end{align*}
		meaning that $f_0 \prod_{\ell = 1}^L T_\ell^{n_j^{(\ell)}} f_\ell = \prod_{\ell = 0}^L T_\ell^{n_j^{(\ell)}} f_\ell$.
		We also use $\| \cdot \|_u$ to denote the uniform norm on $C(X)$.
		
		Fix $M = \max \left\{ 1 , \|f_0\|_u, \| f_1\|_u, \ldots , \|f_L\|_u \right\}$, and fix $\epsilon > 0$. By appealing to the uniform continuity of the functions $f_0, f_1, \ldots, f_L$, choose $\delta_0, \delta_1, \ldots, \delta_L > 0$ such that
		\begin{align*}
			\forall x \in X \; \forall y \in X \;	& \left[ (p(x, y) \leq \delta_\ell) \Rightarrow \left(|f_\ell(x) - f_\ell(y)| \leq \frac{\epsilon}{2 (L + 1) M^L} \right) \right]	& (\ell = 0, 1, \ldots, L) .
		\end{align*}
		Set $\delta = \min \left\{ \delta_0, \delta_1, \ldots, \delta_L \right\} > 0$, and set
		\begin{align*}
			A_k^{(\ell)}	& = \left\{ j \in \{0, 1, \ldots, k - 1\} : \operatorname{diam} \left( T_\ell^{n_j^{(\ell)}} C_k \right) < \delta \right\}	& (\ell = 1, \ldots, L, k \in \mathbb{N}) , \\
			A_k	& = \bigcap_{\ell = 1}^L A_k^{(\ell)} .
		\end{align*}
		Then
		\begin{align*}
			\frac{\left| \{0, 1, \ldots, k - 1 \} \setminus A_k \right|}{k}	& = \frac{ \left| \bigcup_{\ell = 1}^L \left( \{0, 1, \ldots, k - 1 \} \setminus A_k^{(L)} \right) \right| }{k} \\
			& \leq \sum_{\ell = 1}^L \frac{ \left| \left( \{0, 1, \ldots, k - 1 \} \setminus A_k^{(L)} \right) \right| }{k} \\
			& = \sum_{\ell = 1}^L \frac{ \left| \left\{ j \in \{0, 1, \ldots, k - 1\} : \operatorname{diam} \left( T_\ell^{n_j^{(\ell)}} C_k \right) \geq \delta \right\} \right| }{k} \\
			& \stackrel{k \to \infty}{\to} 0 .
		\end{align*}
		
		We now turn to estimating
		\begin{align*}
			& \left| \left( \sum_{j = 0}^{k - 1} f_0(x_0) \prod_{\ell = 1}^\ell T_\ell^{n_j^{(\ell)}} f_\ell(x_0) \right) - \alpha_{C_k} \left( \sum_{j = 0}^{k - 1} f_0 \prod_{\ell = 1}^\ell T_\ell^{n_j^{(\ell)}} f_\ell \right) \right| \\
			\leq	& \frac{1}{k} \sum_{j = 0}^{k - 1} \left| \left( f_0(x_0) \prod_{\ell = 1}^\ell T_\ell^{n_j^{(\ell)}} f_\ell(x_0) \right) - \alpha_{C_k} \left( f_0 \prod_{\ell = 1}^\ell T_\ell^{n_j^{(\ell)}} f_\ell \right) \right| \\
			=	& \frac{1}{k} \sum_{j = 0}^{k - 1} \left| \alpha_{C_k} \left( \left( f_0(x_0) \prod_{\ell = 1}^\ell T_\ell^{n_j^{(\ell)}} f_\ell(x_0) \right) - f_0 \prod_{\ell = 1}^\ell T_\ell^{n_j^{(\ell)}} f_\ell \right) \right| \\
			=	& \frac{1}{k} \left[ \sum_{j \in A_k} \left| \alpha_{C_k} \left( \left( f_0(x_0) \prod_{\ell = 1}^\ell T_\ell^{n_j^{(\ell)}} f_\ell(x_0) \right) - f_0 \prod_{\ell = 1}^\ell T_\ell^{n_j^{(\ell)}} f_\ell \right) \right| \right] \\
			& + \frac{1}{k} \left[ \sum_{j \in \{0, 1, \ldots, k - 1\} \setminus A_k} \left| \alpha_{C_k} \left( \left( f_0(x_0) \prod_{\ell = 1}^\ell T_\ell^{n_j^{(\ell)}} f_\ell(x_0) \right) - f_0 \prod_{\ell = 1}^\ell T_\ell^{n_j^{(\ell)}} f_\ell \right) \right| \right] .
		\end{align*}
		In light of this decomposition, we make separate estimates on
		\begin{align*}
			\left| \alpha_{C_k} \left( \left( f_0(x_0) \prod_{\ell = 1}^\ell T_\ell^{n_j^{(\ell)}} f_\ell(x_0) \right) - f_0 \prod_{\ell = 1}^\ell T_\ell^{n_j^{(\ell)}} f_\ell \right) \right|
		\end{align*}
		based on whether $j \in A_k$ or $j \in \{0, 1, \ldots, k - 1\} \setminus A_k$.
		
		If $j \in A_k$, and $x \in A_k$, then $p \left( T_\ell^{n_j^{(\ell)}} x , T_\ell^{n_j^{(\ell)}} x_0 \right) < \delta$. Using an elementary ``telescoping" estimate, it follows that
		\begin{align*}
			& \left| \left( f_0(x) \prod_{\ell = 1}^\ell T_\ell^{n_j^{(\ell)}} f_\ell(x) \right) - \left( f_0(x_0) \prod_{\ell = 1}^\ell T_\ell^{n_j^{(\ell)}} f_\ell(x_0) \right) \right|	\\
			\leq	& \sum_{h = 0}^L \left( \prod_{\ell = 0}^{h - 1} \left| f_\ell \left( T_\ell^{n_j^{(\ell)}} x \right) \right| \right) \left| f_h \left( T_h^{n_j^{(h)}} x \right) - f_h \left( T_h^{n_j^{(h)}} x_0 \right) \right| \left( \left| \prod_{\ell = h + 1}^L f_\ell \left( T_\ell^{n_j^{(\ell)}} x_0 \right) \right| \right) \\
			\leq	& \sum_{h = 0}^L M^h \frac{\epsilon}{2 (L + 1) M^L} M^{L - h} \\
			& = \epsilon / 2 .
		\end{align*}
		On the other hand, if $j \in B_k$, then
		\begin{align*}
			& \left| \alpha_{C_k} \left( \left( f_0(x_0) \prod_{\ell = 1}^\ell T_\ell^{n_j^{(\ell)}} f_\ell(x_0) \right) - f_0 \prod_{\ell = 1}^\ell T_\ell^{n_j^{(\ell)}} f_\ell \right) \right| \\
			\leq	& \left\| \left( f_0(x_0) \prod_{\ell = 1}^\ell T_\ell^{n_j^{(\ell)}} f_\ell(x_0) \right) - f_0 \prod_{\ell = 1}^\ell T_\ell^{n_j^{(\ell)}} f_\ell \right\| \\
			\leq & (2M)^{L + 1} \\
			= & 2^{L + 1} M^{L + 1} .
		\end{align*}
		
		Now, choose $K \in \mathbb{N}$ such that if $k \geq K$, then
		$$\frac{ \left| \left\{ 0, 1, \ldots, k - 1 \right\} \setminus A_k \right| }{k} \leq \frac{\epsilon}{2^{L + 2} M^{L + 1}} .$$
		Then for all $k \geq K$, we have
		\begin{align*}
			& \left| \left( \sum_{j = 0}^{k - 1} f_0(x_0) \prod_{\ell = 1}^\ell T_\ell^{n_j^{(\ell)}} f_\ell(x_0) \right) - \alpha_{C_k} \left( \sum_{j = 0}^{k - 1} f_0 \prod_{\ell = 1}^\ell T_\ell^{n_j^{(\ell)}} f_\ell \right) \right| \\
			\leq	& \frac{1}{k} \sum_{j = 0}^{k - 1} \left| \left( f_0(x_0) \prod_{\ell = 1}^\ell T_\ell^{n_j^{(\ell)}} f_\ell(x_0) \right) - \alpha_{C_k} \left( f_0 \prod_{\ell = 1}^\ell T_\ell^{n_j^{(\ell)}} f_\ell \right) \right| \\
			=	& \frac{1}{k} \sum_{j = 0}^{k - 1} \left| \alpha_{C_k} \left( \left( f_0(x_0) \prod_{\ell = 1}^\ell T_\ell^{n_j^{(\ell)}} f_\ell(x_0) \right) - f_0 \prod_{\ell = 1}^\ell T_\ell^{n_j^{(\ell)}} f_\ell \right) \right| \\
			=	& \frac{1}{k} \left[ \sum_{j \in A_k} \left| \alpha_{C_k} \left( \left( f_0(x_0) \prod_{\ell = 1}^\ell T_\ell^{n_j^{(\ell)}} f_\ell(x_0) \right) - f_0 \prod_{\ell = 1}^\ell T_\ell^{n_j^{(\ell)}} f_\ell \right) \right| \right] \\
			& + \frac{1}{k} \left[ \sum_{j \in \{0, 1, \ldots, k - 1\} \setminus A_k} \left| \alpha_{C_k} \left( \left( f_0(x_0) \prod_{\ell = 1}^\ell T_\ell^{n_j^{(\ell)}} f_\ell(x_0) \right) - f_0 \prod_{\ell = 1}^\ell T_\ell^{n_j^{(\ell)}} f_\ell \right) \right| \right] \\
			\leq	& \frac{|A_k|}{k} \frac{\epsilon}{2} + \frac{ \left| \left\{ 0, 1, \ldots, k - 1 \right\} \setminus A_k \right| }{k} 2^{L + 1} M^{L + 1} . \\
			\leq	& \frac{\epsilon}{2} + \frac{\epsilon}{2^{L + 2} M^{L + 1}} 2^{L + 1} M^{L + 1} \\
			=	& \epsilon .
		\end{align*}
		Therefore
		$$
		\lim_{k \to \infty} \left| \left( \sum_{j = 0}^{k - 1} f_0(x_0) \prod_{\ell = 1}^\ell T_\ell^{n_j^{(\ell)}} f_\ell(x_0) \right) - \alpha_{C_k} \left( \sum_{j = 0}^{k - 1} f_0 \prod_{\ell = 1}^\ell T_\ell^{n_j^{(\ell)}} f_\ell \right) \right| = 0 .
		$$
	\end{proof}
	
	Theorem \ref{Pointwise reduction for multiple ergodic averages} can be used to convert pointwise convergence results for nonconventional ergodic averages into convergence results for random temporo-spatial averages, as shown by the following result.
	
	\begin{Cor}\label{Random multiple ergodic averages}
		Let $(X, p)$ be a compact pseudometric space, and let $T_1, \ldots, T_L$ be a family of homeomorphisms $T_\ell : X \to X$. Let $\mu$ be a regular Borel probability measure on $X$ invariant under each $T_\ell$. Let $\left( n_j^{(1)} \right)_{j = 1}^\infty , \ldots, \left( n_j^{(L)} \right)_{j = 1}^\infty$ be sequences of integers.
		
		For each point $x \in X$, let $(C_k(x))_{k = 1}^\infty$ be a sequence of measurable subsets of $X$ with positive measure for which $x \in C_k(x)$ and suppose that for each $\ell = 1, \ldots, \ell$, and every $\delta \in (0, \infty)$, we have that
		$$\frac{ \left| \left\{ j \in \{0, 1, \ldots, k - 1\} : \operatorname{diam} \left( T_\ell^{n_j^{(\ell)}} C_k(x) \right) \geq \delta \right\} \right| }{k} \to 0 . $$
		
		Let $f_0, f_1, \ldots, f_L \in C(X)$, and suppose that $f^* \in L^\infty(X, \mu)$ such that
		\begin{align*}
			\lim_{k \to \infty} \frac{1}{k} \sum_{j = 0}^{k - 1} f_0(x) \prod_{\ell = 1}^\ell T_\ell^{n_j^{(\ell)}} f_\ell (x)	& = f^*(x)
		\end{align*}
		for almost all $x \in X$.
		Then
		\begin{align*}
			\lim_{k \to \infty} \alpha_{C_k(x)} \left( \frac{1}{k} \sum_{j = 0}^{k - 1} f_0 \prod_{\ell = 1}^\ell T_\ell^{n_j^{(\ell)}} f_\ell \right)	& = f^*(x)
		\end{align*}
		for almost all $x \in X$.
	\end{Cor}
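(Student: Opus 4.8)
The plan is to mimic the proof of Corollary \ref{Almost-sure convergence}, replacing the single pointwise reduction Theorem \ref{Pointwise and temporo-spatial} by its nonconventional analogue, Theorem \ref{Pointwise reduction for multiple ergodic averages}: at almost every $x$ I would split the error into a ``pointwise reduction'' term controlled by that theorem and a ``pointwise convergence'' term controlled by the standing hypothesis, then combine the two via the triangle inequality.

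First I would isolate the relevant full-measure sets. Put
$$A = \bigcap_{\ell = 1}^L \bigcap_{m = 1}^\infty \left\{ x \in X : \lim_{k \to \infty} \frac{\left| \left\{ j \in \{0, 1, \ldots, k - 1\} : \operatorname{diam}\left( T_\ell^{n_j^{(\ell)}} C_k(x) \right) \geq 1/m \right\} \right|}{k} = 0 \right\},$$
$$B = \left\{ x \in X : \lim_{k \to \infty} \frac{1}{k} \sum_{j = 0}^{k - 1} f_0(x) \prod_{\ell = 1}^L T_\ell^{n_j^{(\ell)}} f_\ell(x) = f^*(x) \right\}.$$
By the stated hypothesis $B$ has full measure, and each of the countably many sets appearing in the intersection defining $A$ is (by hypothesis) all of $X$, so $A = X$; hence $A \cap B$ has full measure. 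The point of passing to the values $\delta = 1/m$ is that the set $\{ j : \operatorname{diam}(T_\ell^{n_j^{(\ell)}} C_k(x)) \geq \delta \}$ is monotone in $\delta$, so membership in $A$ already guarantees the decay condition at $x$ for every $\delta \in (0, \infty)$ and every $\ell = 1, \ldots, L$; this is exactly what is needed to apply Theorem \ref{Pointwise reduction for multiple ergodic averages} with $x_0 = x$ and $C_k = C_k(x)$.

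Then for $x \in A \cap B$ I would estimate
\begin{align*}
\left| \alpha_{C_k(x)}\!\left( \frac{1}{k} \sum_{j = 0}^{k - 1} f_0 \prod_{\ell = 1}^L T_\ell^{n_j^{(\ell)}} f_\ell \right) - f^*(x) \right|
& \leq \left| \alpha_{C_k(x)}\!\left( \frac{1}{k} \sum_{j = 0}^{k - 1} f_0 \prod_{\ell = 1}^L T_\ell^{n_j^{(\ell)}} f_\ell \right) - \frac{1}{k} \sum_{j = 0}^{k - 1} f_0(x) \prod_{\ell = 1}^L T_\ell^{n_j^{(\ell)}} f_\ell(x) \right| \\
& \quad + \left| \frac{1}{k} \sum_{j = 0}^{k - 1} f_0(x) \prod_{\ell = 1}^L T_\ell^{n_j^{(\ell)}} f_\ell(x) - f^*(x) \right| .
\end{align*}
The first summand tends to $0$ as $k \to \infty$ by Theorem \ref{Pointwise reduction for multiple ergodic averages} (applicable since $x \in A$), and the second tends to $0$ because $x \in B$. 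As $A \cap B$ has full measure, this yields the claimed almost-everywhere convergence.

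As for the main obstacle: there essentially is none — the corollary is a formal consequence of Theorem \ref{Pointwise reduction for multiple ergodic averages} together with the triangle inequality, exactly as Corollary \ref{Almost-sure convergence} is a formal consequence of Theorem \ref{Pointwise and temporo-spatial}. The only step requiring any care is the elementary bookkeeping above, which repackages the ``for all $\delta$'' decay hypothesis as a countable family of conditions so that $A$ is (at worst) a countable intersection of full-measure sets.
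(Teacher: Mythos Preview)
Your proposal is correct and follows essentially the same approach as the paper: define the full-measure set where the pointwise multiple ergodic averages converge (the paper calls it $E$, your $B$), and for $x$ in that set apply Theorem \ref{Pointwise reduction for multiple ergodic averages} together with the triangle inequality. Your version is slightly more explicit in that you separately track the set $A$ on which the diameter-decay hypothesis holds and spell out the triangle-inequality split, whereas the paper simply notes that the decay condition is assumed for every $x$ and invokes Theorem \ref{Pointwise reduction for multiple ergodic averages} directly on $E$.
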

	
	\begin{proof}
		Set
		$$E = \left\{ x \in X : \lim_{k \to \infty} \frac{1}{k} \sum_{j = 0}^{k - 1} f_0(x) \prod_{\ell = 1}^\ell T_\ell^{n_j^{(\ell)}} f_\ell (x) = f^*(x) \right\} .$$
		If $x \in E$, then Theorem \ref{Pointwise reduction for multiple ergodic averages} tells us that
		$$
		\lim_{k \to \infty} \alpha_{C_k(x)} \left( \frac{1}{k} \sum_{j = 0}^{k - 1} f_0 \prod_{\ell = 1}^\ell T_\ell^{n_j^{(\ell)}} f_\ell \right) = f^*(x) .
		$$
	\end{proof}
	
	\section{Weighed temporo-spatial differentiation theorems}\label{Weighed temporo-spatial differentiation theorems}
	
	For the duration of this section, we narrow our attention to the case where $G = \mathbb{Z}$, and introduce a generalized form of a temporo-spatial differentiation problem. We also adopt the common notation that the action of the integer $n \in \mathbb{Z}$ be written as $T^n$. Let $(X, \mu)$ consist of a compact pseudometrizable space $X$ endowed with a Borel probability measure $\mu$, and let $T : X \to X$ be a homeomorphism. A \emph{weight} on $X$ is a measurable function $X \to \mathbb{T}$, where $\mathbb{T} = \left\{ z \in \mathbb{C} : |z| = 1 \right\}$. For convenience, write
	$$\operatorname{Avg}_{F}^\xi f : = \frac{1}{|F|} \sum_{j \in F} \xi^j \cdot \left( f \circ T^j \right) ,$$
	where $F$ is a finite nonempty subset of $\mathbb{Z}$. Let $(C_k)_{k = 1}^\infty$ be a sequence of measurable subsets of $X$ with $\mu(C_k) > 0$ for all $k \in \mathbb{N}$, and let $f \in L^\infty (X, \mu)$. What can be said of the limiting behavior of the sequence
	$$\alpha_{C_k} \left( \operatorname{Avg}_{F_k}^\xi f \right)_{k = 1}^\infty ?$$
	Moreover, suppose $\Xi$ is some family of measurable functions $X \to \mathbb{T}$. What can be said about the limiting behavior of the sequence 
	$$\alpha_{C_k} \left( \operatorname{Avg}_{F_k}^\xi f \right)_{k = 1}^\infty$$
	for all $\xi \in \Xi$?
	
	We consider this problem in analogy with a classical problem of pointwise weighted temporal averages.
	
	\begin{W-W}\label{W-W}
		Let $(X, \mu)$ be a standard probability space, and let $T$ be an automorphism of the probability space $(X, \mu)$. Set $[k] = \left\{0, 1, \ldots, k - 1\right\} \subseteq \mathbb{Z}$. Then for each $f \in L^1(\mu)$ exists a set $X_f \subseteq X$ of full measure such that for all $x \in X_f$, and all $\theta \in \mathbb{T}$, the sequence
		$$\left( \operatorname{Avg}_{[k]}^\theta f (x) \right)_{k = 1}^\infty$$
		converges, where we identify the unimodular complex number $\theta$ with the constant function $x \mapsto \theta$ on $X$.
	\end{W-W}
	
	The first alleged proof of the Wiener-Wintner Theorem was presented in \cite{WWOG}, but the argument presented was found to be incorrect. However, several proofs of the result have been presented since then. See \cite[Chapter 2]{AssaniWW} for a discussion of several different approaches to the result.
	
	As in Section \ref{Temporo-spatial differentiation theorems around sets of rapidly vanishing diameter}, we present a very general result that allows us to reduce certain temporo-spatial problems to certain pointwise temporal problems. Afterwards, we provide specific examples of this reduction. Before we can prove Lemma \ref{Non-Holder pointwise and temporo-spatial, weighted version}, we introduce some terminology and prove an elementary technical lemma.
	
	\begin{Def}
		Let $\xi : (X, p) \to \mathbb{C}$ be a complex-valued function on a pseudometric space $(X, p)$. A \emph{modulus of uniform continuity for $\xi$} is a function $\Delta : (0, 1) \to (0, \infty)$ such that
		\begin{align*}
			\forall \epsilon \in (0, 1) \;	& \forall x_1, x_2 \in X	& \left[ (p(x_1, x_2) \leq \Delta(\epsilon)) \Rightarrow |\xi(x_1) - \xi(x_2)| \leq \epsilon \right] .
		\end{align*}
		Given a family $\Xi$ of functions $(X, p) \to \mathbb{C}$, we call a function $\Delta : (0, 1) \to (0, \infty)$ a \emph{modulus of uniform equicontinuity} for $\Xi$ if $\Delta$ is a modulus of uniform continuity for all $\xi \in \Xi$.
	\end{Def}
	
	A function $\xi$ is of course uniformly continuous if and only if it admits a modulus of uniform continuity, and a family $\Xi$ is uniformly equicontinuous if and only if it admits a modulus of uniform equicontinuity. Note we make no assumption that a modulus of uniform continuity or modulus of uniform equicontinuity is the ``best possible" choice. For example, if $\Xi = \{1\}$ consists solely of the constant function $1$, then \emph{any} map $(0, 1) \to (0, 1)$ would be both a modulus of uniform continuity for $1$ and a modulus of uniform equicontinuity for $\Xi$.
	
	\begin{Lem}
		Let $(X, \rho)$ be a compact pseudometric space, and let $\Xi$ be a uniformly equicontinuous family of functions $(X, p) \to \mathbb{T}$ with modulus of uniform equicontinuity $\Delta$. Then $\Xi^j = \left\{ \xi^j : \xi \in \Xi \right\}$ is a uniformly equicontinuous family for all $j \in \mathbb{Z}$, and if $j \neq 0$, then $\epsilon \mapsto \Delta\left( \epsilon / |j| \right)$ is a modulus of uniform equicontinuity for $\Xi^j$.
	\end{Lem}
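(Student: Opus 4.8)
The plan is to reduce everything to a single elementary inequality for unimodular complex numbers: for $z, w \in \mathbb{T}$ and $n \in \mathbb{N}$, one has $|z^n - w^n| \le n \, |z - w|$. First I would establish this by writing
$$z^n - w^n = (z - w) \sum_{i = 0}^{n - 1} z^i w^{n - 1 - i},$$
and noting that since $|z| = |w| = 1$, each of the $n$ summands on the right has modulus $1$, so the sum has modulus at most $n$; the inequality follows by the triangle inequality. This handles the case $j = n \ge 1$ directly.

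Next I would dispatch the case $j = 0$: here $\Xi^0 = \{1\}$ consists solely of the constant function $1$, which is trivially uniformly equicontinuous (any map $(0,1) \to (0,\infty)$ serves as a modulus), so there is nothing to prove, and this case is excluded from the quantitative claim anyway.

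For negative $j$, I would observe that for $\xi : X \to \mathbb{T}$ we have $\xi^{-1} = \bar\xi$ pointwise (as $|\xi| \equiv 1$), hence $\xi^j = \overline{\xi^{|j|}}$, and since complex conjugation is an isometry of $\mathbb{C}$, $|\xi^j(x_1) - \xi^j(x_2)| = |\xi^{|j|}(x_1) - \xi^{|j|}(x_2)|$. Thus for all nonzero $j \in \mathbb{Z}$ and all $x_1, x_2 \in X$,
$$\left| \xi^j(x_1) - \xi^j(x_2) \right| \le |j| \cdot \left| \xi(x_1) - \xi(x_2) \right|$$
by the inequality of the first paragraph applied with $n = |j|$. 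Now fix $\epsilon \in (0,1)$; since $|j| \ge 1$ we have $\epsilon / |j| \in (0,1)$, so $\Delta(\epsilon/|j|)$ is defined, and if $p(x_1, x_2) \le \Delta(\epsilon/|j|)$ then $|\xi(x_1) - \xi(x_2)| \le \epsilon/|j|$ for every $\xi \in \Xi$ (this is exactly the defining property of $\Delta$ as a modulus of uniform equicontinuity for $\Xi$), whence $|\xi^j(x_1) - \xi^j(x_2)| \le |j| \cdot (\epsilon/|j|) = \epsilon$ for every $\xi^j \in \Xi^j$. This shows $\epsilon \mapsto \Delta(\epsilon/|j|)$ is a modulus of uniform equicontinuity for $\Xi^j$, and in particular $\Xi^j$ is uniformly equicontinuous. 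There is no real obstacle here; the only thing to be careful about is checking that the argument $\epsilon/|j|$ stays inside the domain $(0,1)$ of $\Delta$, which holds precisely because $j \ne 0$.
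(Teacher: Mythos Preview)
Your proof is correct and follows essentially the same approach as the paper: the telescoping factorization $z^n - w^n = (z-w)\sum_{i=0}^{n-1} z^i w^{n-1-i}$ to get $|\xi^j(x_1) - \xi^j(x_2)| \le |j| \cdot |\xi(x_1) - \xi(x_2)|$ for $j > 0$, and the observation that $\xi^{-j} = \overline{\xi^j}$ with conjugation an isometry for $j < 0$. You are in fact slightly more careful than the paper in explicitly verifying the modulus claim and checking that $\epsilon/|j|$ lies in the domain of $\Delta$.
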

	
	\begin{proof}
		If $j = 0$, then $\Xi^j = \left\{1\right\}$, which is trivially uniformly equicontinuous, and in fact any map $(0, 1) \to (0, 1)$ whatsoever will be a modulus of uniform equicontinuity for $\Xi^0$. Now assume that $j \neq 0$.
		
		We prove this first for $j \in \mathbb{N}$, i.e. $j = |j| > 0$. Let $x_1, x_2 \in X , \xi \in \Xi$. We set up a telescoping sum
		\begin{align*}
			\left| \xi^j(x_1) - \xi^j(x_2) \right|	& = \left| \left( \xi(x_1) - \xi(x_2) \right) \sum_{p = 0}^{j - 1} \xi^p(x_1) \xi^{j - p - 1}(x_2) \right| \\
			& = \left| \xi(x_1) - \xi(x_2) \right| \cdot \left| \sum_{p = 0}^{j - 1} \xi^p(x_1) \xi^{j - p - 1}(x_2) \right| \\
			& \leq \left| \xi(x_1) - \xi(x_2) \right| \cdot \sum_{j = 0}^{p - 1} \left| \xi^p(x_1) \xi^{j - p - 1}(x_2) \right| \\
			& = \left| \xi(x_1) - \xi(x_2) \right| \cdot j .
		\end{align*}
		
		Now, in the case where $j < 0$, i.e. $j = - |j|$, we observe that $\Xi^j = \left(\Xi^{|j|}\right)^{-1} = \left\{ \overline{\zeta} : \zeta \in \Xi^j \right\}$, and conjugation is an isometry.
	\end{proof}
	
	\begin{Lem}\label{Non-Holder pointwise and temporo-spatial, weighted version}
		Let $(X, p)$ be a compact pseudometric space, and let $T : X \curvearrowright X$ be a homeomorphism of $X$. Let $\mu$ be a regular Borel probability measure on $X$. Fix a point $x_0 \in X$, and for each $n \in \mathbb{Z}, r \in (0, \infty)$, let $D_{x_0}(j, r)$ be the value
		$$D_{x_0}(j, r) = \sup \left\{ p \left( T^j x_0, T^j x \right) : x \in X , p(x_0, x) \leq r \right\} .$$
		
		Let $(F_k)_{k = 1}^\infty$ be a sequence of finite nonempty subsets of $\mathbb{Z}$. Let $\Xi$ be a uniformly equicontinuous family of continuous functions $X \to \mathbb{T}$, and for each $j \in \mathbb{Z}$, let $\Delta^j$ be a modulus of uniform equicontinuity for $\Xi^j$. Let $(C_k)_{k = 1}^\infty$ be a sequence of measurable subsets of $X$ such that $\mu(C_k) > 0$ and $x_0 \in C_k$ for all $k \in \mathbb{N}$. Suppose that for every $\delta > 0 , \epsilon > 0$, we have
		\begin{align*}
			\lim_{k \to \infty} \frac{\left| \left\{ j \in F_k : D_{x_0}(g, \operatorname{diam}(C_k)) > \delta \right\} \right|}{|F_k|}	& = 0 , \\
			\lim_{k \to \infty}	\frac{\left| \left\{ j \in F_k : \operatorname{diam} \left( C_k \right) > \Delta^j(\epsilon) \right\} \right| }{|F_k|}	& = 0 .
		\end{align*}
		Let $f \in C(X)$. Finally, suppose there exists a constant $\lambda > 0$ such that $\mu\left( T^j C_k \right) \leq \lambda \mu(C_k)$ for all $j \in \mathbb{N}$. Then for all $\xi \in \Xi$, we have
		$$
		\lim_{k \to \infty} \left| \left( \operatorname{Avg}_{F_k}^\xi f \right) (x_0) - \alpha_{C_k} \left( \operatorname{Avg}_{F_k}^\xi f \right) \right| = 0 ,
		$$
		and the convergence is uniform in $\xi \in \Xi$.
	\end{Lem}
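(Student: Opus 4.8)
The plan is to carry out the good-set/bad-set decomposition from the proof of Lemma \ref{Reduction lemma}, now dragging the weight $\xi$ along with it; the statement is essentially the weighted analogue of Theorem \ref{Non-Holder pointwise and temporo-spatial}. Since $X$ is compact, $f$ is uniformly continuous (\cite[Lemma 3.1]{Assani-Young}), so given $\epsilon > 0$ I would fix $\delta > 0$ with $|f(y_1) - f(y_2)| \le \epsilon/2$ whenever $p(y_1, y_2) \le \delta$, and set $\eta = \epsilon/(2(1 + \|f\|_u))$. Then split each $F_k$ into the good part
$$A_k = \left\{ j \in F_k : D_{x_0}(j, \operatorname{diam}(C_k)) \le \delta \right\} \cap \left\{ j \in F_k : \operatorname{diam}(C_k) \le \Delta^j(\eta) \right\}$$
and $B_k = F_k \setminus A_k$. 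The two displayed hypotheses say precisely that each of the two intersected sets has relative cardinality tending to $1$ in $F_k$, so $|B_k|/|F_k| \to 0$.

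Next I would expand, using that $\alpha_{C_k}$ is a state (so it fixes constants and satisfies $|\alpha_{C_k}(h)| \le \|h\|_{L^\infty}$),
$$\left( \operatorname{Avg}_{F_k}^\xi f \right)(x_0) - \alpha_{C_k}\!\left( \operatorname{Avg}_{F_k}^\xi f \right) = \frac{1}{|F_k|} \sum_{j \in F_k} \left( \xi^j(x_0) f(T^j x_0) - \alpha_{C_k}\!\left( \xi^j \cdot (f \circ T^j) \right) \right),$$
and estimate the $A_k$ and $B_k$ contributions separately. For $j \in A_k$ and $x \in C_k$, both $x_0$ and $x$ lie in $C_k$, so $p(x_0, x) \le \operatorname{diam}(C_k)$; hence $p(T^j x_0, T^j x) \le D_{x_0}(j, \operatorname{diam}(C_k)) \le \delta$, while $|\xi^j(x_0) - \xi^j(x)| \le \eta$ by the defining property of the modulus of uniform equicontinuity for $\Xi^j$ applied to $\xi^j$. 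From the identity $\xi^j(x_0) f(T^j x_0) - \xi^j(x) f(T^j x) = \xi^j(x_0)\bigl(f(T^j x_0) - f(T^j x)\bigr) + \bigl(\xi^j(x_0) - \xi^j(x)\bigr) f(T^j x)$ together with $|\xi^j| \equiv 1$, each piece is at most $\epsilon/2$ pointwise on $C_k$, so $|\xi^j(x_0) f(T^j x_0) - \alpha_{C_k}(\xi^j \cdot (f \circ T^j))| \le \epsilon$. For $j \in B_k$ I would use the blunt bound $|\xi^j(x_0) f(T^j x_0) - \alpha_{C_k}(\xi^j \cdot (f \circ T^j))| \le 2\|f\|_u$; this is the step at which the hypothesis $\mu(T^j C_k) \le \lambda \mu(C_k)$ becomes relevant, since it lets one re-express $\alpha_{C_k}(\xi^j \cdot (f \circ T^j))$ via the change of variables $y = T^j x$ at the cost of a factor $\lambda$, though the blunt bound already suffices. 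Summing, $\bigl|(\operatorname{Avg}_{F_k}^\xi f)(x_0) - \alpha_{C_k}(\operatorname{Avg}_{F_k}^\xi f)\bigr| \le \epsilon + 2\|f\|_u \, |B_k|/|F_k|$; taking $\limsup_k$ gives $\le \epsilon$, and $\epsilon \to 0$ finishes the limit.

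The uniformity in $\xi \in \Xi$ then comes for free: the choice of $\delta$, the truncation level $\eta$, and the sets $A_k, B_k$ depend only on $f$, $\epsilon$, and the family-wide moduli $\Delta^j$ — never on the particular $\xi$ — so the bound $\epsilon + 2\|f\|_u |B_k|/|F_k|$ holds for every $\xi \in \Xi$ at once, which is exactly the assertion. I expect the one delicate point to be the bookkeeping for the weight: because $\Delta^j$ genuinely depends on $j$, the ``good set for $\xi$'' must be written as $\{ j \in F_k : \operatorname{diam}(C_k) \le \Delta^j(\eta) \}$ rather than in terms of a single modulus, and the second displayed hypothesis is exactly what forces that set to be asymptotically of full density in $F_k$; everything else is a routine adaptation of the proof of Lemma \ref{Reduction lemma}.
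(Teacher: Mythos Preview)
Your argument is correct and follows the same good-set/bad-set decomposition as the paper's proof, but your implementation of the good-set estimate is actually cleaner. The paper, for $j\in A_k$, performs a change of variables $y=T^jx$ to rewrite $\alpha_{C_k}(\xi^j\cdot(f\circ T^j))$ as an average over $T^jC_k$, and it is at that point that the hypothesis $\mu(T^jC_k)\le\lambda\mu(C_k)$ is invoked to replace $\mu(C_k)^{-1}$ by $\lambda\,\mu(T^jC_k)^{-1}$; the two resulting integrals are then bounded using $p(T^jx_0,y)\le\delta$ and $p(x_0,T^{-j}y)\le\Delta^j(\cdot)$. You bypass this entirely by estimating $|\xi^j(x_0)f(T^jx_0)-\xi^j(x)f(T^jx)|$ pointwise for $x\in C_k$ via the algebraic identity and the two moduli, so the $\lambda$ hypothesis never enters --- as you correctly observe, the blunt bound $2\|f\|_u$ on the bad set also needs nothing more than $|\alpha_{C_k}(h)|\le\|h\|_\infty$. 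Both routes yield the same uniformity in $\xi$ for the same reason: the sets $A_k$, $B_k$ depend only on the family-wide moduli $\Delta^j$, not on the individual weight.
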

	
	\begin{proof}
		Our proof of this result is similar in structure to our proof of Lemma \ref{Non-Holder pointwise and temporo-spatial}, but with the added wrinkle of accounting for how the weight affects our averages.
		
		Fix $\epsilon > 0$, and let $\xi \in \Xi$. Since $f$ is uniformly continuous, there exists $\delta_1 > 0$ such that if $y_1, y_2 \in X$, and $p(y_1, y_2) \leq \delta$, then $|f(y_1) - f(y_2)| \leq \frac{\epsilon}{4 \lambda \max \{ 1 , \|f\|_u \}}$, where $\| \cdot \|_u$ denotes the uniform norm on $C(X)$. Write
		\begin{align*}
			A_k	& = \left\{ j \in F_k : D_{x_0}(j, \operatorname{diam}(C_k)) \leq \delta , \; \operatorname{diam} \left( T^j C_k \right) \leq \Delta^j\left(\frac{\epsilon}{4 \lambda \max \{ 1 , \|f\|_u \}}\right) \right\} , \\
			B_k	& = F_k \setminus A_k .
		\end{align*}
		Our hypothesis tells us that $|A_k| / |F_k| \to 1 , |B_k| / |F_k| \to 0$. We estimate
		\begin{align*}
			& \left| \left( \operatorname{Avg}_{F_k}^\xi f \right)(x_0) - \alpha_{C_k} \left( \operatorname{Avg}_{F_k}^\xi \right) \right| \\
			=	& \left| \alpha_{C_k} \left( \left( \operatorname{Avg}_{F_k}^\xi f \right)(x_0) - \operatorname{Avg}_{F_k}^\xi f \right) \right| \\
			=	& \left| \alpha_{C_k} \left( \frac{1}{|F_k|} \sum_{j \in F_k} \left[ \xi(x_0)^j f \left( T^j x_0 \right) - \xi^j \left( f \circ T^j \right) \right] \right) \right| \\
			\leq & \left| \alpha_{C_k} \left( \frac{1}{|F_k|} \sum_{j \in A_k} \left[ \xi(x_0)^j f \left( T^j x_0 \right) - \xi^j \left( f \circ T^j \right) \right] \right) \right| \\
			& + \left| \alpha_{C_k} \left( \frac{1}{|F_k|} \sum_{j \in B_k} \left[ \xi(x_0)^j f \left( T^j x_0 \right) - \xi^j \left( f \circ T^j \right) \right] \right) \right|
		\end{align*}
		We estimate these two terms separately, starting with the first.
		
		If $j \in A_k$, then
		\begin{align*}
			& \left| \alpha_{C_k} \left( \xi(x_0)^j f \left( T^j x_0 \right) - \xi^j \left( f \circ T^j \right) \right) \right| \\
			=	& \left| \frac{1}{\mu(C_k)} \int_{C_k} \left( \xi^j(x_0) f \left( T^j x_0 \right) - \xi^j(x) f \left( T^j x \right) \right) \mathrm{d} \mu(x) \right| \\
			\leq	& \frac{1}{\mu(C_k)} \int_{C_k} \left| \xi^j(x_0) f \left( T^j x_0 \right) - \xi^j(x) f \left( T^j x \right) \right| \mathrm{d} \mu(x) \\
			=	& \frac{1}{\mu(C_k)} \int_{T^j C_k} \left| \xi^j \left( x_0 \right) f \left( T^j x_0 \right) - \xi^j \left( T^{-j} y \right) f \left( y \right) \right| \mathrm{d} \mu(y) \\
			\leq	& \frac{\lambda}{\mu \left( T^j C_k \right)} \int_{T^j C_k} \left| \xi^j \left( x_0 \right) f \left( T^j x_0 \right) - \xi^j \left( T^{-j} y \right) f \left( y \right) \right| \mathrm{d} \mu(y) \\
			\leq	& \frac{\lambda}{\mu \left( T^j C_k \right)} \int_{T^j C_k} \left| \xi^j \left( x_0 \right) f \left( T^j x_0 \right) - \xi^j(x_0)f \left( y \right) \right| \mathrm{d} \mu(y) \\
			& + \frac{\lambda}{\mu \left( T^j C_k \right)} \int_{T^j C_k} \left| \xi^j \left( x_0 \right) f \left( y \right) - \xi^j \left( T^{-j} y \right) f \left( y \right) \right| \mathrm{d} \mu(y) \\
			=	& \frac{\lambda}{\mu \left( T^j C_k \right)} \int_{T^j C_k} \left| \xi^j(x_0) \right| \left| f \left( T^j x_0 \right) - f \left( y \right) \right| \mathrm{d} \mu(y) \\
			& + \frac{\lambda}{\mu \left( T^j C_k \right)} \int_{T^j C_k} \left| \xi^j \left( x_0 \right) - \xi^j \left( T^{-j} y \right) \right| \cdot \left| f(y) \right| \mathrm{d} \mu(y) \\
			\leq	& \frac{\lambda}{\mu \left( T^j C_k \right)} \int_{T^j C_k} \left| f \left( T^j x_0 \right) - f \left( y \right) \right| \mathrm{d} \mu(y) \\
			& + \frac{\lambda}{\mu \left( T^j C_k \right)} \int_{T^j C_k} \left| \xi^j \left( x_0 \right) - \xi^j \left( T^{-j} y \right) \right| \cdot \left\| f \right\|_\infty \mathrm{d} \mu(y) .
		\end{align*}
		First, if $y \in T^j C_k$, and $D_{x_0} \left( j , \operatorname{diam}(C_k) \leq \delta \right)$, then $p \left( T^j x_0, y \right) \leq \delta$, meaning that
		$$
		\frac{\lambda}{\mu \left( T^j C_k \right)} \int_{T^j C_k} \left| f \left( T^j x_0 \right) - f \left( y \right) \right| \mathrm{d} \mu(y) \leq \frac{\epsilon}{4} ,
		$$
		and $y \in T^j C_k \Rightarrow T^{-j} y \in C_k$, meaning that $p \left( x_0, T^{-j} y \right) \leq \operatorname{diam}(C_k) \leq \Delta^j \left( \frac{\epsilon}{4 \lambda \max \left\{ 1 , \|f\|_u \right\}} \right)$, so
		$$
		\frac{\lambda}{\mu \left( T^j C_k \right)} \int_{T^j C_k} \left| \xi^j \left( x_0 \right) - \xi^j \left( T^{-j} y \right) \right| \cdot \left\| f \right\|_\infty \mathrm{d} \mu(y) \leq \frac{\epsilon}{4} .
		$$
		Thus
		$$
		\left| \alpha_{C_k} \left( \frac{1}{|F_k|} \sum_{j \in A_k} \left[ \xi(x_0)^j f \left( T^j x_0 \right) - \xi^j \left( f \circ T^j \right) \right] \right) \right| \leq \frac{\epsilon}{2} .
		$$
		
		Suppose now that $j \in B_k$. By a computation similar to the one performed for the case where $j \in A_k$, we get
		\begin{align*}
			& \left|\xi(x_0)^j f \left( T^j x_0 \right) - \xi^j \left( f \circ T^j \right)\right|\\
			\leq	& \frac{\lambda}{\mu \left( T^j C_k \right)} \int_{T^j C_k} \left| \xi^j \left( x_0 \right) f \left( T^j x_0 \right) - \xi^j \left( T^{-j} y \right) f \left( y \right) \right| \mathrm{d} \mu(y) \\
			\leq	& \frac{\lambda}{\mu \left( T^j C_k \right)} \int_{T^j C_k} \left( \left| \xi^j \left( x_0 \right) f \left( T^j x_0 \right) \right| + \left| \xi^j \left( T^{-j} y \right) f \left( y \right) \right| \right) \mathrm{d} \mu(y) \\
			\leq	& \frac{\lambda}{\mu \left( T^j C_k \right)} \mu \left( T^j C_k \right) \left( 2 \|f\|_u \right) \\
			=	& 2 \lambda \|f\|_u .
		\end{align*}
		Choose $K \in \mathbb{N}$ such that if $k \geq K$, then $\frac{|B_k|}{|F_k|} \leq \frac{\epsilon}{4 \lambda \max \left\{ 1 , \|f\|_u \right\}}$. Then if $k \geq K$, we have
		\begin{align*}
			& \left| \left( \operatorname{Avg}_{F_k}^\xi f \right)(x_0) - \alpha_{C_k} \left( \operatorname{Avg}_{F_k}^\xi \right) \right| \\
			\leq & \left| \alpha_{C_k} \left( \frac{1}{|F_k|} \sum_{j \in A_k} \left[ \xi(x_0)^j f \left( T^j x_0 \right) - \xi^j \left( f \circ T^j \right) \right] \right) \right| \\
			& + \left| \alpha_{C_k} \left( \frac{1}{|F_k|} \sum_{j \in B_k} \left[ \xi(x_0)^j f \left( T^j x_0 \right) - \xi^j \left( f \circ T^j \right) \right] \right) \right| \\
			\leq	& \frac{\epsilon}{2} + \frac{\epsilon}{2} \\
			& = \epsilon .
		\end{align*}
		We note that our estimates on $K$ were independent of our choice of $\xi \in \Xi$, meaning the convergence is uniform in $\xi$.
	\end{proof}
	
	With this in mind, we can state the following.
	
	\begin{Thm}
		Let $(X, p)$ be a compact pseudometric space, and let $T : X \curvearrowright X$ be a homeomorphism of $X$. Let $\mu$ be a regular Borel probability measure on $X$. Suppose there exist functions $H , L : \mathbb{Z} \to (0, \infty)$ such that
		\begin{align*}
			p \left( T^g x , T^g y \right)	& \leq L(j) \cdot p(x, y)^{H(j)}	& \left( \forall j \in \mathbb{Z} , x \in X , y \in X \right) .
		\end{align*}
		
		Let $(F_k)_{k = 1}^\infty$ be a sequence of finite nonempty subsets of $\mathbb{Z}$. Let $\Xi$ be a uniformly equicontinuous family of continuous functions $X \to \mathbb{T}$, and for each $j \in \mathbb{Z}$, let $\Delta^j$ be a modulus of uniform equicontinuity for $\Xi^j$. Let $(C_k)_{k = 1}^\infty$ be a sequence of measurable subsets of $X$ such that $\mu(C_k) > 0$ and $x_0 \in C_k$ for all $k \in \mathbb{N}$. Suppose that for every $\delta > 0 , \epsilon > 0$, we have
		\begin{align*}
			\lim_{k \to \infty} \frac{\left| \left\{ j \in F_k : L(j) \cdot \operatorname{diam}(C_k)^{H(j)} > \delta \right\} \right|}{|F_k|}	& = 0 , \\
			\lim_{k \to \infty}	\frac{\left| \left\{ j \in F_k : \operatorname{diam} \left( C_k \right) > \Delta^j(\epsilon) \right\} \right| }{|F_k|}	& = 0 .
		\end{align*}
		Let $x_0 \in X$ be a point in $X$, and let $f : X \to \mathbb{C}$ be a uniformly bounded continuous function Finally, suppose there exists a constant $\lambda > 0$ such that $\mu\left( T^j C_k \right) \leq \lambda \mu(C_k)$ for all $j \in \mathbb{N}$. Then for all $\xi \in \Xi$, we have
		$$
		\lim_{k \to \infty} \left| \left( \operatorname{Avg}_{F_k}^\xi f \right) (x_0) - \alpha_{C_k} \left( \operatorname{Avg}_{F_k}^\xi f \right) \right| = 0 ,
		$$
		and the convergence is uniform in $\xi \in \Xi$.
	\end{Thm}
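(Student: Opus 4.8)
The plan is to deduce this theorem directly from Lemma~\ref{Non-Holder pointwise and temporo-spatial, weighted version}, in exactly the way that Theorem~\ref{Pointwise and temporo-spatial} was deduced from Lemma~\ref{Reduction lemma} in the unweighted setting. The one substantive observation is that the H\"older condition $p(T^g x, T^g y) \leq L(j) \cdot p(x,y)^{H(j)}$ controls the quantity $D_{x_0}(j, r)$ used in Lemma~\ref{Non-Holder pointwise and temporo-spatial, weighted version}: for any $x_0 \in X$ and any $r \in (0, \infty)$, if $p(x_0, x) \leq r$ then $p(T^j x_0, T^j x) \leq L(j) \cdot r^{H(j)}$, so
\begin{align*}
	D_{x_0}(j, r)	& \leq L(j) \cdot r^{H(j)} .
\end{align*}

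First I would record this inequality and use it to verify the first decay hypothesis of Lemma~\ref{Non-Holder pointwise and temporo-spatial, weighted version}. Taking $r = \operatorname{diam}(C_k)$ yields the inclusion
$$\left\{ j \in F_k : D_{x_0}(j, \operatorname{diam}(C_k)) > \delta \right\} \subseteq \left\{ j \in F_k : L(j) \cdot \operatorname{diam}(C_k)^{H(j)} > \delta \right\}$$
for every $\delta > 0$, so the hypothesis of the present theorem that the right-hand set has density $0$ in $F_k$ forces the left-hand set to have density $0$ as well. The second decay hypothesis of Lemma~\ref{Non-Holder pointwise and temporo-spatial, weighted version}, concerning $\operatorname{diam}(C_k)$ and $\Delta^j(\epsilon)$, is verbatim the same as the corresponding hypothesis here; likewise the requirements that $f$ be continuous, that each $C_k$ be measurable with positive measure and contain $x_0$, and that $\mu(T^j C_k) \leq \lambda \mu(C_k)$ for all $j \in \mathbb{N}$ all transfer directly. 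With every hypothesis of the lemma now in hand, I would invoke it to conclude that $\lim_{k \to \infty} \left| \left( \operatorname{Avg}_{F_k}^\xi f \right)(x_0) - \alpha_{C_k}\left( \operatorname{Avg}_{F_k}^\xi f \right) \right| = 0$ uniformly in $\xi \in \Xi$, which is precisely the assertion.

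There is essentially no obstacle here: the entire content is the elementary bound $D_{x_0}(j, r) \leq L(j) r^{H(j)}$ together with the remark that a H\"older-type decay condition on $\operatorname{diam}(C_k)$ subsumes the more abstract $D_{x_0}$-decay condition, which is the very same reduction already carried out in the proof of Theorem~\ref{Pointwise and temporo-spatial}. The only points needing a moment's bookkeeping are that the $\Delta^j$ in the statement are genuine moduli of uniform equicontinuity for $\Xi^j$ (exactly what the preceding lemma on $\Xi^j$ supplies, should one wish to exhibit them explicitly) and that the uniform boundedness of $f$ demanded in the statement is automatic from the compactness of $X$ and the continuity of $f$.
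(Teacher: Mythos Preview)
Your proposal is correct and follows exactly the paper's approach: the paper's proof consists of the single observation $D_{x_0}(j,r) \leq L(j)\cdot r^{H(j)}$ followed by an appeal to Lemma~\ref{Non-Holder pointwise and temporo-spatial, weighted version}. You have simply spelled out in slightly more detail the set inclusion that makes this work and verified that the remaining hypotheses carry over verbatim.
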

	
	\begin{proof}
		We have the bound $D_{x_0} (j, r) \leq L(j) \cdot r^{H(j)}$. We can thus apply Lemma \ref{Non-Holder pointwise and temporo-spatial, weighted version}.
	\end{proof}
	
	\begin{Cor}\label{W-W stds}
		Let $(X, \rho)$ be a compact metric space, and let $T : X \to X$ be a homeomorphism. Let $\mu$ be a Borel probability measure on $X$ that's $T$-invariant. Let $f \in C(X)$. For each $x \in X$, let $C_k(x)$ be a measurable subset of $X$ with positive measure such that
		\begin{align*}
			\lim_{k \to \infty} \frac{\left| \left\{ j \in [k] : D_x(j, \operatorname{diam}(C_k(x))) > \delta \right\} \right|}{k}	& = 0 , \\
			\lim_{k \to \infty}	\frac{\left| \left\{ j \in [k] : \operatorname{diam} \left( C_k(x) \right) > \Delta^j(\epsilon) \right\} \right| }{k}	& = 0 .
		\end{align*}
		Then for every $f \in C(X)$ exists a set $X_f \subseteq X$ of full measure such that for all $x \in X_f$, and all $\theta \in \mathbb{T}$, the sequence
		$$\left( \alpha_{C_k(x)} \left( \operatorname{Avg}_{[k]}^\theta \right) \right)_{k = 1}^\infty $$
		converges.
	\end{Cor}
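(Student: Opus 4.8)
The plan is to deduce this from the \emph{Wiener-Wintner pointwise ergodic theorem} together with Lemma \ref{Non-Holder pointwise and temporo-spatial, weighted version}, applied with $\Xi$ taken to be the family of constant functions of modulus $1$. First I would identify $\mathbb{T}$ with the family $\Xi = \{x \mapsto \theta : \theta \in \mathbb{T}\}$ of constant functions $X \to \mathbb{T}$, so that for the constant function $\theta$ one has $\operatorname{Avg}_{[k]}^\theta f = \frac{1}{k}\sum_{j=0}^{k-1} \theta^j (f\circ T^j)$, exactly the weighted average appearing in Wiener-Wintner. Since each member of $\Xi$ is constant, $|\xi(x_1)-\xi(x_2)| = 0$ for all $x_1,x_2\in X$ and all $\xi\in\Xi$, so \emph{any} map $(0,1)\to(0,\infty)$ is a modulus of uniform equicontinuity for $\Xi$; the same is true of $\Xi^j = \{x\mapsto\theta^j:\theta\in\mathbb{T}\}$, which again consists only of constants. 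Hence whatever functions $\Delta^j$ are fixed in the statement of the corollary are legitimate moduli of uniform equicontinuity for the $\Xi^j$. Moreover, since $\mu$ is $T$-invariant we have $\mu(T^j C_k(x)) = \mu(C_k(x))$ for every $j$, so the measure-comparison hypothesis $\mu(T^j C_k)\le\lambda\mu(C_k)$ of Lemma \ref{Non-Holder pointwise and temporo-spatial, weighted version} holds with $\lambda=1$; and $\mu$ is automatically regular because $X$ is compact metric.

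Next, fix $f\in C(X)\subseteq L^1(\mu)$ and let $X_f\subseteq X$ be the full-measure set furnished by the Wiener-Wintner pointwise ergodic theorem, so that for every $x\in X_f$ and every $\theta\in\mathbb{T}$ the sequence $\left(\operatorname{Avg}_{[k]}^\theta f(x)\right)_{k=1}^\infty$ converges. For each $x\in X$ the two displayed hypotheses of the corollary say precisely that the diameter-decay conditions required by Lemma \ref{Non-Holder pointwise and temporo-spatial, weighted version} hold for the point $x_0=x$, the averaging sets $F_k=[k]$, and the sequence $(C_k(x))_{k=1}^\infty$. Assembling the data from the previous paragraph, the lemma applies and yields, for every $x\in X$,
$$\lim_{k\to\infty}\left| \left(\operatorname{Avg}_{[k]}^\theta f\right)(x) - \alpha_{C_k(x)}\left(\operatorname{Avg}_{[k]}^\theta f\right) \right| = 0$$
uniformly in $\theta\in\mathbb{T}$.

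Finally I would combine the two ingredients: for $x\in X_f$ and any $\theta\in\mathbb{T}$, write
$$\alpha_{C_k(x)}\left(\operatorname{Avg}_{[k]}^\theta f\right) = \left(\operatorname{Avg}_{[k]}^\theta f\right)(x) + \left[ \alpha_{C_k(x)}\left(\operatorname{Avg}_{[k]}^\theta f\right) - \left(\operatorname{Avg}_{[k]}^\theta f\right)(x) \right];$$
the first term converges by Wiener-Wintner and the bracketed term tends to $0$ by the lemma, so the left-hand side converges, which is exactly the assertion of the corollary (with $X_f$ the Wiener-Wintner set). There is essentially no analytic content here beyond the two cited results, so the only place that needs care --- the ``main obstacle'' --- is the bookkeeping verification that the constant family $\Xi=\mathbb{T}$ genuinely satisfies every hypothesis of Lemma \ref{Non-Holder pointwise and temporo-spatial, weighted version}: that it is uniformly equicontinuous (trivially) with the given moduli $\Delta^j$, that the measure-comparison constant may be taken to be $\lambda=1$ by $T$-invariance, and that $C(X)\subseteq L^1(\mu)$ so that Wiener-Wintner is applicable. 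One should also observe that the uniformity in $\theta$ provided by the lemma is not even needed, since the decomposition above is carried out for each fixed $\theta$ separately.
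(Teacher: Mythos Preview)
Your proposal is correct and follows essentially the same approach as the paper: invoke the Wiener--Wintner pointwise ergodic theorem to obtain the full-measure set $X_f$, apply Lemma \ref{Non-Holder pointwise and temporo-spatial, weighted version} with $\Xi$ the family of constant unimodular functions to reduce the temporo-spatial average to the pointwise one, and combine. Your write-up is in fact more thorough than the paper's, which omits the bookkeeping checks (that constants are trivially uniformly equicontinuous with any $\Delta^j$, that $T$-invariance gives $\lambda=1$, and that regularity is automatic) and simply asserts that the lemma applies.
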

	
	\begin{proof}
		By the Wiener-Wintner pointwise ergodic theorem, there exists a set $X_f \subseteq X$ of full measure such that for all $x \in X_f$, and all $\theta \in \mathbb{T}$, the sequence
		$$\left( \operatorname{Avg}_{[k]}^\theta f (x) \right)_{k = 1}^\infty$$
		converges. By Lemma \ref{Non-Holder pointwise and temporo-spatial, weighted version}, it follows that $\lim_{k \to \infty} \left| \alpha_{C_k(x)} \left( \operatorname{Avg}_{[k]}^\theta f \right) - \left( \operatorname{Avg}_{[k]}^\theta f (x) \right) \right| = 0$. Thus the sequence converges.
	\end{proof}
	
	We now consider a different class of weighting sequences, where we choose our weights to be constant functions, but loosen our assumptions about boundedness. Given a sequence $\mathbf{F} = (F_k)_{k = 1}^\infty$ of finite subsets of $\mathbb{Z}$, set
	$$M^{\mathbf{F}} : =\left\{ (a_k)_{k \in \mathbb{Z}} \in \mathbb{C}^\mathbb{Z} : \sup_{\ell \in \mathbb{N}} \frac{1}{|F_\ell|} \sum_{j \in F_\ell} |a_j| < \infty \right\}.$$
	We also introduce the notation
	$$\operatorname{Avg}_{F}^a : = \frac{1}{|F|} \sum_{j \in F} a_j f \circ T^j ,$$
	where $F$ is a finite subset of $\mathbb{Z}$.
	
	Our next result establishes that under a rapidly decaying diameter condition, temporo-spatial differentiations involving weighted ergodic means for continuous functions can be reduced to pointwise temporal averages. The twist here is that the diameter decay condition also hinges on the weighting sequence.
	
	\begin{Prop}\label{Non-Holder pointwise and temporo-spatial, weighted sequences of numbers}
		Let $(X, p)$ be a compact pseudometric space, and let $T : X \curvearrowright X$ be a homeomorphism of $X$. Let $\mu$ be a regular Borel probability measure on $X$. Fix a point $x_0 \in X$, and for each $n \in \mathbb{Z}, r \in (0, \infty)$, let $D_{x_0}(j, r)$ be the value
		$$D_{x_0}(j, r) = \sup \left\{ p \left( T^j x_0, T^j x \right) : x \in X , p(x_0, x) \leq r \right\} .$$
		
		Let $\mathbf{F} = (F_k)_{k = 1}^\infty$ be a sequence of finite nonempty subsets of $\mathbb{Z}$. Let $(a_k)_{k = 0}^\infty \in M^{\mathbf{F}}$, and let $(C_k)_{k = 1}^\infty$ be a sequence of measurable subsets of $X$ such that $\mu(C_k) > 0$ and $x_0 \in C_k$ for all $k \in \mathbb{N}$. Suppose that for every $\delta > 0$, we have
		\begin{align*}
			\lim_{k \to \infty} \frac{1}{|F_k|} \sum_{j \in F_k , \; D_{x_0}(j, \operatorname{diam}(C_k)) > \delta} |a_j|	& = 0 ,
		\end{align*}
		Let $f : X \to \mathbb{C}$ be a continuous function. Finally, suppose there exists a constant $\lambda > 0$ such that $\mu\left( T^j C_k \right) \leq \lambda \mu(C_k)$ for all $j \in \mathbb{N}$. Then we have
		$$
		\lim_{k \to \infty} \left| \left( \operatorname{Avg}_{F_k}^a f \right) (x_0) - \alpha_{C_k} \left( \operatorname{Avg}_{F_k}^a f \right) \right| = 0 .
		$$
	\end{Prop}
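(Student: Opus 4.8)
The plan is to mimic the proof of Lemma~\ref{Non-Holder pointwise and temporo-spatial, weighted version} (which in turn follows the template of Lemma~\ref{Reduction lemma}), with one simplification and one new ingredient: the weights $a_j$ are $x$-independent constants rather than a continuous unimodular cocycle, so the change-of-variables manoeuvre from that lemma is unnecessary (in particular the constant $\lambda$ in the hypotheses will not actually be used here); on the other hand the $a_j$ may be unbounded, so all the bookkeeping must be done with the $|a_j|$-weighted averages rather than with cardinalities, which is exactly the place where the assumption $(a_k)\in M^{\mathbf F}$ enters.

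First I would write
$$\left(\operatorname{Avg}_{F_k}^a f\right)(x_0) - \alpha_{C_k}\left(\operatorname{Avg}_{F_k}^a f\right) = \alpha_{C_k}\!\left(\frac{1}{|F_k|}\sum_{j\in F_k} a_j\bigl(f(T^j x_0) - f\circ T^j\bigr)\right),$$
and put $\Lambda := \sup_{\ell\in\N}\frac{1}{|F_\ell|}\sum_{j\in F_\ell}|a_j|$, which is finite precisely because $(a_k)\in M^{\mathbf F}$. Fix $\epsilon>0$. Since $X$ is a compact pseudometric space, $f$ is uniformly continuous (\cite[Lemma~3.1]{Assani-Young}), so I may choose $\delta>0$ with $p(y_1,y_2)\le\delta \Rightarrow |f(y_1)-f(y_2)|\le \frac{\epsilon}{2\max\{1,\Lambda\}}$. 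Split $F_k = A_k\cup B_k$ with $A_k = \{\,j\in F_k : D_{x_0}(j,\operatorname{diam}(C_k))\le\delta\,\}$ and $B_k = F_k\setminus A_k$; the standing hypothesis says precisely that $\frac{1}{|F_k|}\sum_{j\in B_k}|a_j|\to 0$ as $k\to\infty$. For $j\in A_k$ and $x\in C_k$ we have $p(x_0,x)\le\operatorname{diam}(C_k)$, hence $p(T^j x_0,T^j x)\le D_{x_0}(j,\operatorname{diam}(C_k))\le\delta$, so $\left|\alpha_{C_k}\bigl(a_j(f(T^j x_0)-f\circ T^j)\bigr)\right|\le |a_j|\,\frac{\epsilon}{2\max\{1,\Lambda\}}$; summing over $j\in A_k$ and dividing by $|F_k|$ bounds the $A_k$-contribution by $\Lambda\cdot\frac{\epsilon}{2\max\{1,\Lambda\}}\le\frac{\epsilon}{2}$. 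For $j\in B_k$ the crude bound $|f(T^j x_0)-f(T^j x)|\le 2\|f\|_u$ gives $\left|\alpha_{C_k}\bigl(a_j(f(T^j x_0)-f\circ T^j)\bigr)\right|\le 2|a_j|\,\|f\|_u$, so the $B_k$-contribution is at most $\frac{2\|f\|_u}{|F_k|}\sum_{j\in B_k}|a_j|$, which tends to $0$; choosing $K$ so that this is $\le\frac{\epsilon}{2}$ for $k\ge K$ yields $\left|\left(\operatorname{Avg}_{F_k}^a f\right)(x_0) - \alpha_{C_k}\left(\operatorname{Avg}_{F_k}^a f\right)\right|\le\epsilon$ for all $k\ge K$.

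The only point requiring any real thought — and the step I would flag as the crux — is the $A_k$-estimate: because the $a_j$ are not individually bounded, one must not try to extract a uniform pointwise bound on $|a_j|$, and instead observe that $\frac{1}{|F_k|}\sum_{j\in A_k}|a_j|\le\Lambda<\infty$ uniformly in $k$; this is exactly what membership in $M^{\mathbf F}$ supplies, and it dovetails with the way the ``bad-index'' hypothesis is phrased (as decay of a $|a_j|$-weighted sum rather than of a cardinality). Everything else is routine, and the pseudometric (as opposed to metric) setting causes no trouble, since $f$ remains uniformly continuous.
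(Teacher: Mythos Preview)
Your proof is correct and follows essentially the same $A_k/B_k$ decomposition as the paper's own argument. The one noteworthy difference is that the paper, imitating its proof of Lemma~\ref{Non-Holder pointwise and temporo-spatial, weighted version}, performs a change of variables $y = T^j x$ and invokes the hypothesis $\mu(T^j C_k)\le\lambda\,\mu(C_k)$ to pass from an integral over $C_k$ to one over $T^j C_k$; you instead estimate $|f(T^j x_0)-f(T^j x)|$ directly for $x\in C_k$ via $p(T^j x_0,T^j x)\le D_{x_0}(j,\operatorname{diam}(C_k))$, exactly as in Lemma~\ref{Reduction lemma}. Your route is cleaner here because the weights $a_j$ do not depend on $x$, and as you correctly note it shows that the constant~$\lambda$ in the hypotheses is in fact superfluous for this proposition.
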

	
	\begin{proof}
		Fix $\epsilon > 0$. Appealing to the uniform continuity and boundedness of $f$, choose $\delta > 0$ such that
		$$p(y_1, y_2) \leq \delta \Rightarrow |f(y_1) - f(y_2)| \leq \epsilon .$$
		Set
		\begin{align*}
			A_k	& = \left\{ j \in F_k : D_{x_0} (j, \operatorname{diam}(C_k)) \leq \delta \right\} , \\
			B_k	& = \left\{ j \in F_k : D_{x_0} (j, \operatorname{diam}(C_k)) > \delta \right\} .
		\end{align*}
		Then $\frac{|A_k|}{|F_k|} \leq 1$, and $\frac{1}{|F_k|} \sum_{j \in B_k} |a_j| \to 0$. Using a calculation similar to that used in our proof of Lemma \ref{Non-Holder pointwise and temporo-spatial, weighted version}, we get
		\begin{align*}
			\\
			\left| \left( \operatorname{Avg}_{F_k}^a f \right)(x_0) - \alpha_{C_k} \left( \operatorname{Avg}_{F_k}^a \right) \right|	\leq & \left| \alpha_{C_k} \left( \frac{1}{|F_k|} \sum_{j \in A_k} \left[ a_j f \left( T^j x_0 \right) - a_j \left( f \circ T^j \right) \right] \right) \right| \\
			& + \left| \alpha_{C_k} \left( \frac{1}{|F_k|} \sum_{j \in B_k} \left[ a_j f \left( T^j x_0 \right) - a_j \left( f \circ T^j \right) \right] \right) \right| .
		\end{align*}
		As before, we'll estimate these two terms separately. Many of the calculations done here are quite similar to those used in our proof of Lemma \ref{Non-Holder pointwise and temporo-spatial, weighted version}, so we will be terser in our presentation here.
		
		First, suppose $j \in A_k$. Then
		\begin{align*}
			\left| \alpha_{C_k} \left( a_j f \left( T^j x_0 \right) - a_j \left( f \circ T^j \right) \right) \right|	& = \frac{|a_j|}{\mu(C_k)} \left| \int_{C_k} \left( f\left( T^j x_0 \right) - f \left( T^j x \right) \right) \mathrm{d} \mu(x) \right| \\
			& \leq \frac{|a_j|}{\mu(C_k)} \int_{C_k} \left| f\left( T^j x_0 \right) - f \left( T^j x \right) \right| \mathrm{d} \mu(x) \\
			& \leq \lambda |a_j| \frac{1}{\mu \left(T^j C_k \right)} \int_{T^j C_k} \left| f\left( T^j x_0 \right) - f \left( y \right) \right| \mathrm{d} \mu(y) \\
			& \leq \lambda |a_j| \epsilon .
		\end{align*}
		Therefore
		\begin{align*}
			& \left| \alpha_{C_k} \left( \frac{1}{|F_k|} \sum_{j \in A_k} \left[ a_j f \left( T^j x_0 \right) - a_j \left( f \circ T^j \right) \right] \right) \right| \\
			\leq	& \frac{1}{|F_k|} \sum_{j \in A_k} \left| \alpha_{C_k} \left( a_j f \left( T^j x_0 \right) - a_j \left( f \circ T^j \right) \right) \right| \\
			\leq	& \frac{1}{|F_k|} \sum_{j \in A_k} \lambda |a_j| \epsilon \\
			\leq	& \frac{1}{|F_k|} \sum_{j \in F_k} |a_j| \lambda \epsilon \\
			\leq	& \left( \sup_{\ell \in \mathbb{N}} |F_\ell|^{-1} \sum_{j \in F_\ell} |a_j| \right) \lambda \epsilon .
		\end{align*}
		
		Now, consider the case where $j \in B_k$. Then
		\begin{align*}
			\left| \alpha_{C_k} \left( a_j f \left( T^j x_0 \right) - a_j \left( f \circ T^j \right) \right) \right|	& \leq \lambda |a_j| \frac{1}{\mu \left(T^j C_k \right)} \int_{T^j C_k} \left| f\left( T^j x_0 \right) - f \left( y \right) \right| \mathrm{d} \mu(y) \\
			& \leq \lambda |a_j| \left( 2 \|f\|_u \right) .
		\end{align*}
		Choose $K \in \mathbb{N}$ such that if $k \geq K$, then $\frac{1}{|F_k|} \sum_{j \in B_k} |a_j| \leq \epsilon$. Then
		\begin{align*}
			\left| \alpha_{C_k} \left( \frac{1}{|F_k|} \sum_{j \in A_k} \left[ a_j f \left( T^j x_0 \right) - a_j \left( f \circ T^j \right) \right] \right) \right|	& \leq \frac{1}{|F_k|} \sum_{j \in B_k} \lambda |a_j| (2 \|f\|_u) \\
			& \leq 2 \lambda \|f\|_u \frac{1}{|F_k|} \sum_{j \in B_k} |a_j| \\
		\end{align*}
		
		Therefore, if $k \geq K$, we have
		\begin{align*}
			& \left| \left( \operatorname{Avg}_{F_k}^a f \right)(x_0) - \alpha_{C_k} \left( \operatorname{Avg}_{F_k}^a \right) \right| \\
			\leq & \left| \alpha_{C_k} \left( \frac{1}{|F_k|} \sum_{j \in A_k} \left[ a_j f \left( T^j x_0 \right) - a_j \left( f \circ T^j \right) \right] \right) \right| \\
			& + \left| \alpha_{C_k} \left( \frac{1}{|F_k|} \sum_{j \in B_k} \left[ a_j f \left( T^j x_0 \right) - a_j \left( f \circ T^j \right) \right] \right) \right| \\
			\leq	& \left( \sup_{\ell \in \mathbb{N}} |F_\ell|^{-1} \sum_{j \in F_\ell} |a_j| \right) \lambda \epsilon + 2 \lambda \|f\|_u \epsilon \\
			=	& \lambda \left( \left( \sup_{\ell \in \mathbb{N}} |F_\ell|^{-1} \sum_{j \in F_\ell} |a_j| \right) + 2 \|f\|_u \right) \epsilon .
		\end{align*}
		This coefficient on $\epsilon$ is independent of our choice of $k$, so we can conclude that
		$$\lim_{k \to \infty} \left| \left( \operatorname{Avg}_{F_k}^a f \right) (x_0) - \alpha_{C_k} \left( \operatorname{Avg}_{F_k}^a f \right) \right| = 0 .$$
	\end{proof}

\section*{Acknowledgments}

This paper is written as part of the author's graduate studies. He is grateful to his beneficent advisor, professor Idris Assani, for no shortage of helpful guidance.

An earlier version of this paper referred to ``tempero-spatial differentiations." Professor Mark Williams pointed out that the more correct portmanteau would be ``temporo-spatial." We thank Professor Williams for this observation.
	
	\bibliography{Bibliography}
\end{document}